\definecolor{darkblue}{rgb}{0.0,0.0,0.65}
\definecolor{darkred}{rgb}{0.65,0.0,0.0}
\newtheorem{theorem}{Theorem}
\newtheorem{lemma}[theorem]{Lemma}
\newtheorem{corollary}[theorem]{Corollary}
\newtheorem{remark}[theorem]{Remark}
\newtheorem{assumption}{Assumption}
\newcommand{\tsum}{\textstyle\sum} 
\newcommand{\indicator}[1]{\mathbbm{1}_{\{ #1 \}}}
\DeclareMathOperator{\ber}{Bern}
\DeclareMathOperator{\KL}{KL}
\DeclarePairedDelimiterX{\inp}[2]{\langle}{\rangle}{#1, #2}
\DeclarePairedDelimiterX{\abs}[1]{\lvert}{\rvert}{#1}
\DeclarePairedDelimiterX{\norm}[1]{\lVert}{\rVert}{#1}
\DeclarePairedDelimiterX{\cbr}[1]{\{}{\}}{#1} 
\DeclarePairedDelimiterX{\rbr}[1]{(}{)}{#1} 
\DeclarePairedDelimiterX{\sbr}[1]{[}{]}{#1} 
\newcommand{\Oc}{\mathcal{O}}
\newcommand{\Ac}{\mathcal{A}}
\newcommand{\Bc}{\mathcal{B}}
\newcommand{\Dc}{\mathcal{D}}
\newcommand{\tDc}{\mathcal{\tilde D}}
\newcommand{\E}{\mathbb{E}}
\newcommand{\reals}{\mathbb{R}}
\newcommand{\inner}[1]{\langle #1 \rangle}
\title{Why are Adaptive Methods Good \\ for Attention Models? }
\author{%
	Jingzhao Zhang \\
	MIT \\
	\texttt{jzhzhang@mit.edu} \\
	\And
	Sai Praneeth Karimireddy \\
	EPFL \\
	\texttt{sai.karimireddy@epfl.ch} \\
	\And
	Andreas Veit \\
	Google Research \\
	\texttt{aveit@google.com} \\
	\And
	Seungyeon Kim \\
	Google Research \\
	\texttt{seungyeonk@google.com} \\
	\And
	Sashank Reddi \\
	Google Research \\
	\texttt{sashank@google.com} \\
	\And
	Sanjiv Kumar \\
	Google Research \\
	\texttt{sanjivk@google.com} \\
	\And
	Suvrit Sra \\
	MIT \\
	suvrit@mit.edu	
}
\begin{document}
	
	\maketitle
	\vspace{-0.1cm}
	\begin{abstract}

	
While stochastic gradient descent (SGD) is still the \emph{de facto} algorithm in deep learning, adaptive methods like Clipped SGD/Adam have been observed to outperform SGD across important tasks, such as attention models. The settings under which SGD performs poorly in comparison to adaptive methods are not well understood yet. In this paper, we provide empirical and theoretical evidence that a heavy-tailed distribution of the noise in stochastic gradients is one cause of SGD's poor performance. We provide the first tight upper and lower convergence bounds for adaptive gradient methods under heavy-tailed noise. Further, we demonstrate how gradient clipping plays a key role in addressing heavy-tailed gradient noise. Subsequently, we show how clipping can be applied in practice by developing an \emph{adaptive} coordinate-wise clipping algorithm (ACClip) and demonstrate its superior performance on BERT pretraining and finetuning tasks.

	\end{abstract}
		\vspace{-0.1cm}


\section{Introduction}\label{sec:introduction}
\vspace{-0.2cm}

Stochastic gradient descent (SGD) is the canonical algorithm for training neural networks \citep{robmon51}. SGD iteratively updates model parameters in the negative gradient direction and seamlessly scales to large-scale settings. Though a well-tuned SGD outperforms adaptive methods \citep{wilson2017marginal} in many tasks including ImageNet classification (see Figure~\ref{fig:loss-sgd-Adam-imagenet}), certain tasks necessitate the use of \emph{adaptive} variants of SGD (e.g., Adagrad~\citep{duchi2011adaptive}, Adam~\citep{kingma2014Adam}, AMSGrad~\citep{reddi2018convergence}), which employ adaptive learning rates. For instance, consider training an attention model~\citep{vaswani2017attention} using BERT~\citep{devlin2018bert}. Figure~\ref{fig:loss-sgd-Adam} shows that in spite of extensive hyperparameter tuning, SGD converges much slower than Adam during BERT training.

In this work, we provide one explanation for why adaptivity can facilitate convergence with theoretical and empirical evidence. The significant hint that initializes our work comes from the distribution of the stochastic gradients. For Imagenet, the norms of the mini-batch gradients are typically quite small and well concentrated around their mean. On the other hand, the mini-batch gradient norms for BERT take a wide range of values and are sometimes much larger than their mean value. More formally, while the distribution of the stochastic gradients in Imagenet is well approximated by a Gaussian, the distribution for BERT seems to be \emph{heavy-tailed}. Such observation leads us to the question: does adaptivity stabilize optimization under heavy-tailed noise?

We provide a positive answer to the above question by performing both theoretical and empirical studies of the convergence of optimization methods under heavy-tailed noise. In this setting, some of the stochastic gradients are much larger than the mean and can excessively influence the updates of SGD. This makes SGD unstable and leads to its poor performance. A natural strategy to stabilize the updates is to \emph{clip} the magnitude of the stochastic gradients. We prove that indeed it is sufficient to ensure convergence even under heavy-tailed noise. Based on the analysis, we then motivate the design of a novel algorithm (ACClip) that outperforms ADAM on BERT related tasks. Specifically, we make the following  contributions:

\begin{itemize}
\setlength{\itemsep}{1pt}
    \item We empirically show that in tasks on which Adam outperforms SGD (BERT pretraining), the noise in stochastic gradients is heavy-tailed. On the other hand, on tasks where traditionally SGD outperforms Adam (ImageNet training), we show that the noise is well concentrated. 
    \item In section~\ref{sec:thm}, we study the convergence of gradient methods under heavy-tailed noise condition where SGD's performance degrades and its convergence might fail. We then establish (with upper and lower bounds) the convergence of \emph{clipped} gradient methods under the same condition and prove that they obtain theoretically \emph{optimal} rates.
    \item Though clipping speeds up SGD, it does not close the gap between SGD and ADAM. In section~\ref{sec:transformerxl}, we motivated the  a novel \emph{adaptive}-threshold coordinate-wise clipping algorithm and in section~\ref{sec:experiment} experimentally show that it outperforms Adam on BERT training tasks.
\end{itemize}

\begin{figure}[t]
\centering

\end{figure}

\subsection{Related work}

\textbf{Adaptive step sizes.} Adaptive step sizes during optimization have long been studied \citep{armijo1966minimization,polyak1987introduction}. More recently, \citet{duchi2011adaptive} developed the Adagrad algorithm that benefits from the sparsity in stochastic gradients. Inspired by Adagrad, several adaptive methods have been proposed in the deep learning community \citep{tieleman2012lecture,kingma2014Adam}. Recently, there has been a surge in interest to study the theoretical properties of these adaptive gradient methods due to~\citep{reddi2018convergence}, which pointed out the non-convergence of Adam and proposed an alternative algorithm, AMSGrad. Since then, many works studied different interesting aspects of adaptive methods, see \citep{ward2018adagrad, li2018convergence, zhou2018convergence,staib2019escaping, chen2018convergence, zou2018convergence, zhou2018adashift, agarwal2018case, zou2019sufficient, liu2019variance,ma2019adequacy,huang2018nostalgic, zhang2020why}. Another direction of related work is normalized gradient descent, which has been studied for quasi-convex and non-convex settings \citep{levy2016power, hazan2015beyond}.  In contrast to our work, these prior works assume standard noise distributions that might not be applicable to key modern applications such as attention models, which exhibit heavy-tailed noise. Furthermore, convergence rates of adaptive methods are mostly worse than SGD.

\textbf{Noise in neural network.} There has been little study of the actual stochastic gradient noise distributions in neural network training. To our knowledge, \cite{simsekli2019tail, nguyen2019first, csimcsekli2020fractional} start the topic and observe heavy tailed noise in network training. Our work differs in two important ways: \textit{First}, we treat the noise as a high dimensional vector, while \citep{simsekli2019tail} treat deviations in each coordinate as scaler noises to estimate tail index. Hence, we observe that the example given in \cite{simsekli2019tail} is well-concentrated when viewed as a random vector. This is also confirmed by \cite{panigrahi2019non}. More experimental comparisons are in Appendix~\ref{sec:comp}. \textit{Second}, we focus on convergence of optimization algorithm, the previously mentioned works focus on Langevin dynamics and escaping saddle points. The convergence rate given in \cite{nguyen2019first} is for global Holder-continuous functions, which restricts the function variations and excludes examples like quadratic functions. Our analysis instead provides the first convergence rates under the standard L-smoothness setting. Further, \cite{gorbunov2020stochastic} studies accelerated first order methods under less concentrated noise, however, there ``heavy-tailedness'' refers to non-sub-Gaussianity.

\vspace{-0.2cm}
\section{Heavy-tailed noise in stochastic gradients}
\vspace{-0.2cm}

To gain intuition about the difference between SGD and adaptive methods, we start our discussion with the study of noise distributions of stochastic gradient that arise during neural network training. In particular, we focus on noise distributions while training two popular deep learning models --- BERT and ResNet. Note that BERT and ResNet are typically trained with Adam and SGD (with momentum) respectively and can thus, provide insights about difference between these optimizers. 

We first investigate the distribution of the gradient noise norm $\norm*{g - \nabla f(x)}$ in the aforementioned neural network models, where $g$ is the stochastic gradient computed from a minibatch sample. In particular, we fix the model at initialization without doing any updates. We then iterate through the dataset to compute the noise norm for each minibatch. Figure~\ref{fig:heavy-noise}~(b) and (f) show these distributions for ResNet50 on ImageNet and BERT on the Wikipedia and books dataset at model initialization respectively.  For comparison, we plot distributions of a normalized sum of squared Gaussians, a well-concentrated distribution, and a Levy-$\alpha$-stable distribution, a heavy-tailed distribution, in Figure~\ref{fig:heavy-noise}~(c) and (g) respectively. We observe that the noise distribution for BERT appears heavy-tailed, while that of ResNet50 is well-concentrated. Results for noise distributions at other stages of training are displayed in Figure~\ref{fig:nonstationary}.

To support this observation, in Figure~\ref{fig:heavy-noise}~(d) and (h) we further show the empirical variance of stochastic gradients with respect to the sample size used in the estimation. The results highlight that while the corresponding estimator converges for Imagenet, the empirical variance does not converge in BERT training even as the sample size approaches $10^7$.

From the obeservation that the noise can be heavy-tailed, we hypothesize that this is one major aspect that determines the performance of SGD and adaptive methods. In the rest of the paper, we argue and provide evidence that adaptive methods can be faster than SGD in scenarios where heavy-tailed noise distributions arise. More experiment details can be found in Section~\ref{sec:experiment}.


\begin{figure}[t]
\centering
\captionsetup[subfigure]{position=b,format=myformat}

	\begin{subfigure}{0.24\textwidth}
	\centering
	\includegraphics[width=0.85\linewidth]{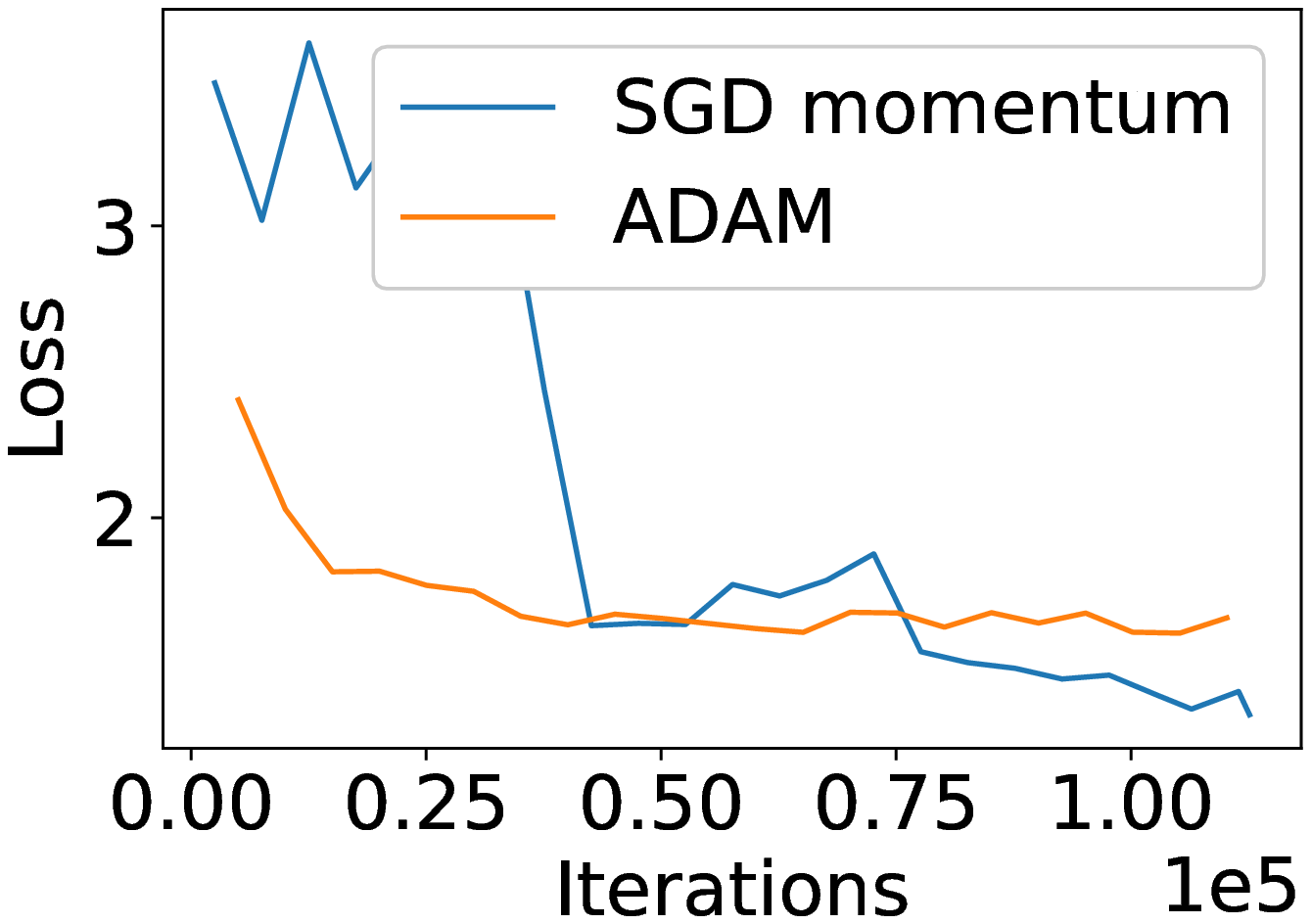}
	\caption{}\label{fig:loss-sgd-Adam-imagenet}
\end{subfigure}
    \begin{subfigure}{.24\textwidth}
	    \centering
    	\includegraphics[width=0.9\linewidth]{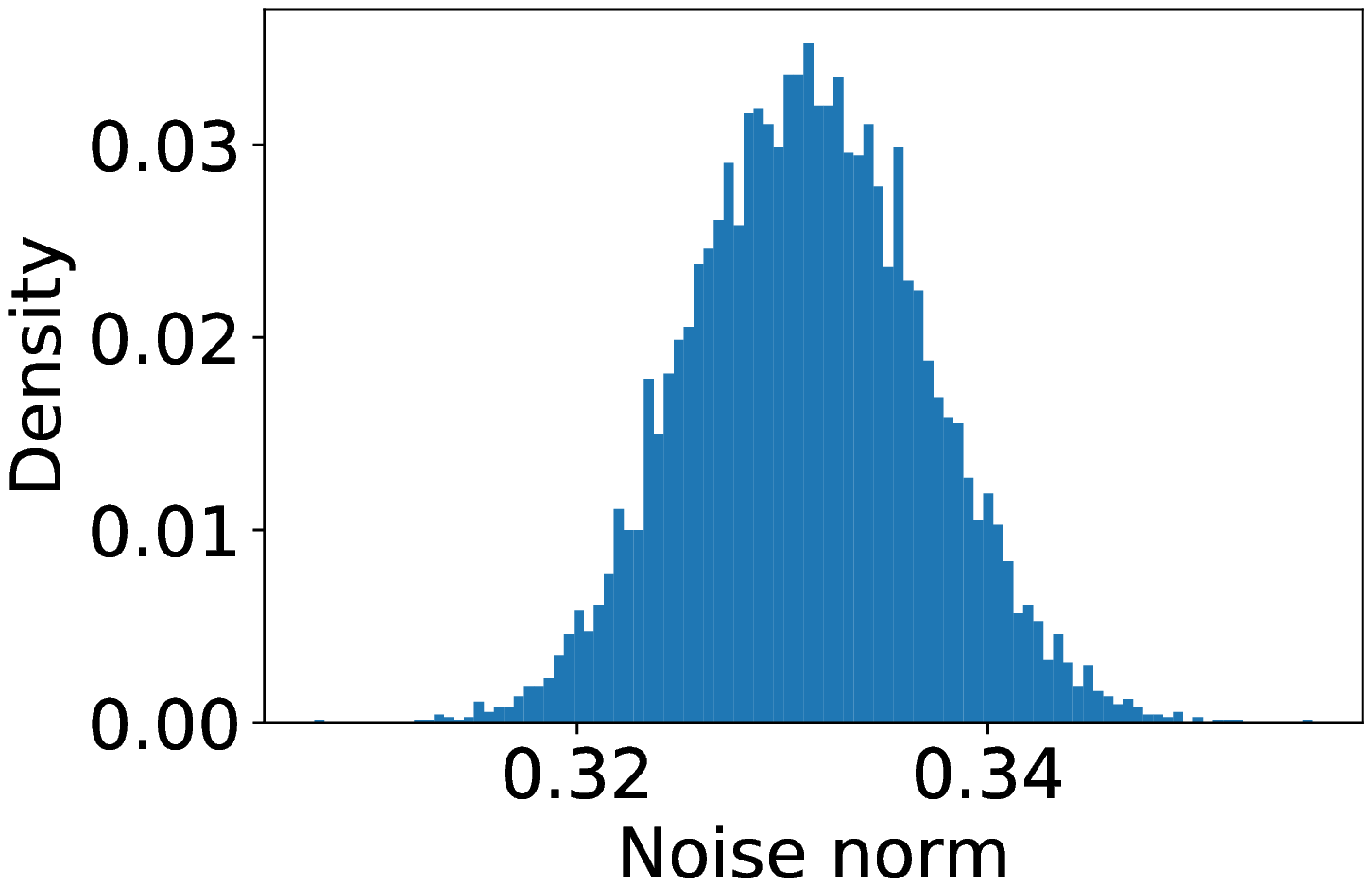}
        \caption{$\text{ImageNet training}$}
    \end{subfigure}
    \begin{subfigure}{.24\textwidth}
	    \centering
    	\includegraphics[width=0.9\linewidth]{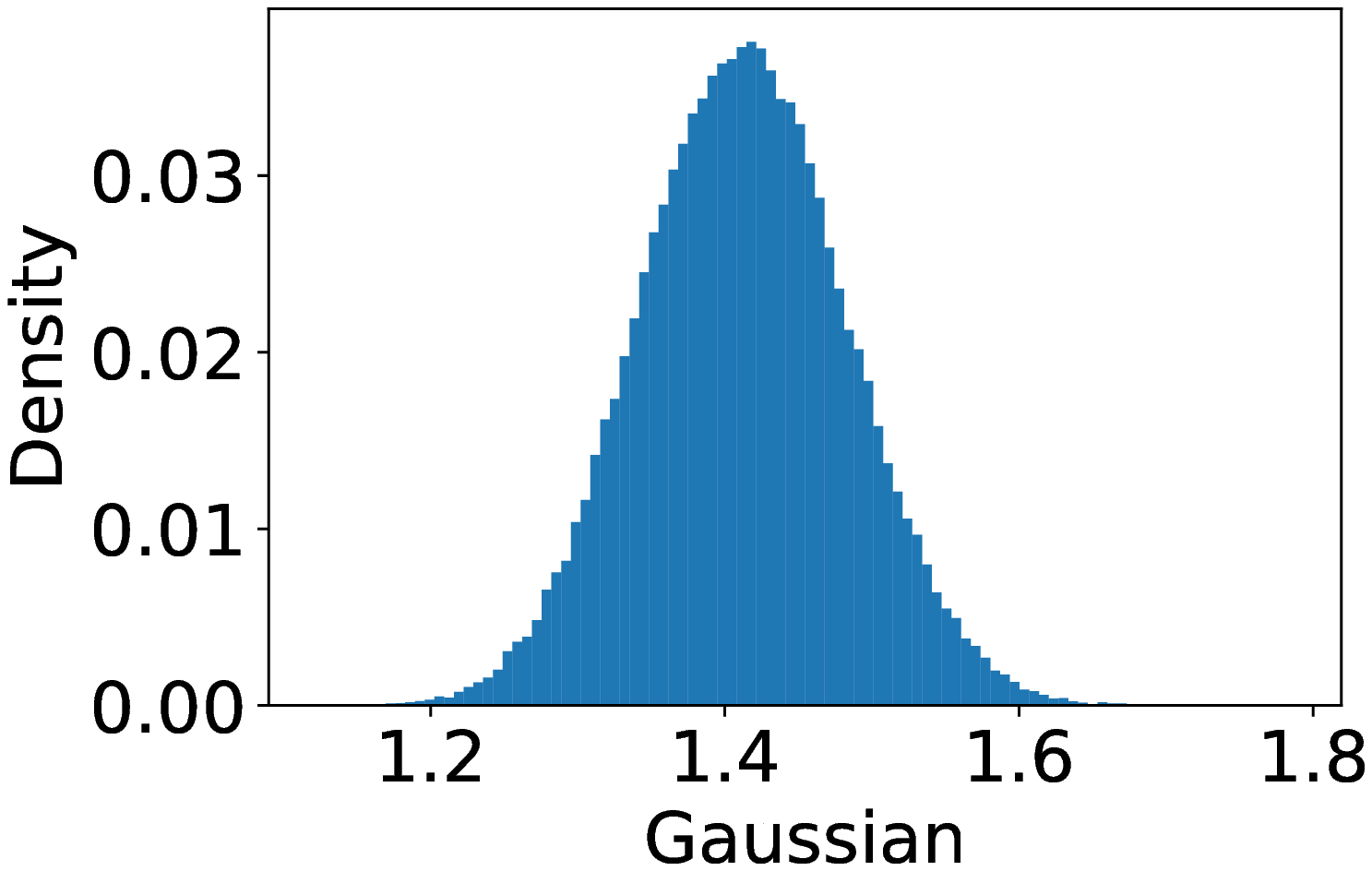}
        \caption{Synthetic Gaussian}
	\end{subfigure}
    \begin{subfigure}{.24\textwidth}
	    \centering
    	\includegraphics[width=0.9\linewidth]{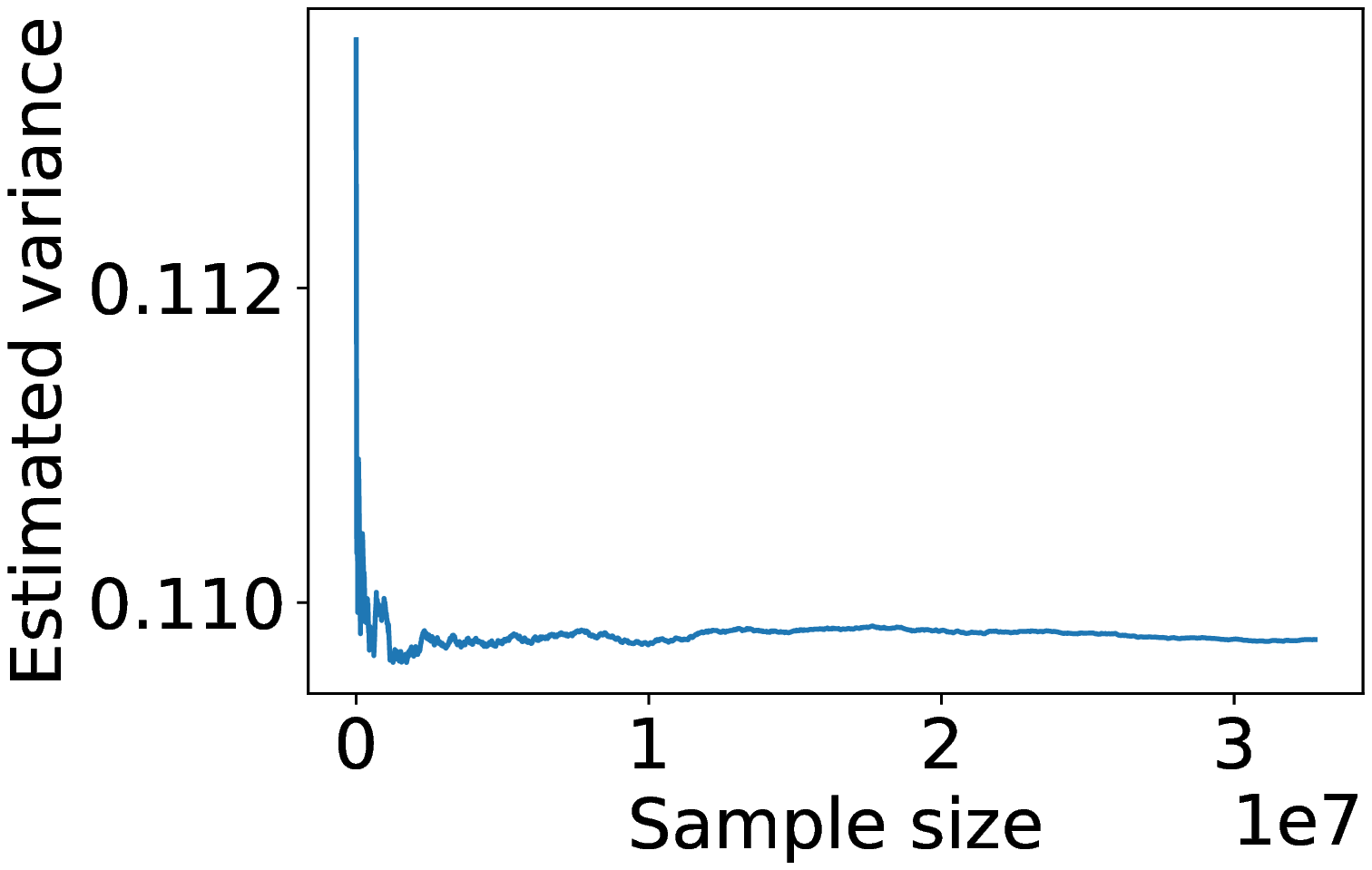}
	    \caption{ImageNet  variance}
	\end{subfigure}
	\\
		\begin{subfigure}{0.24\textwidth}
		\centering
		\includegraphics[width=0.85\linewidth]{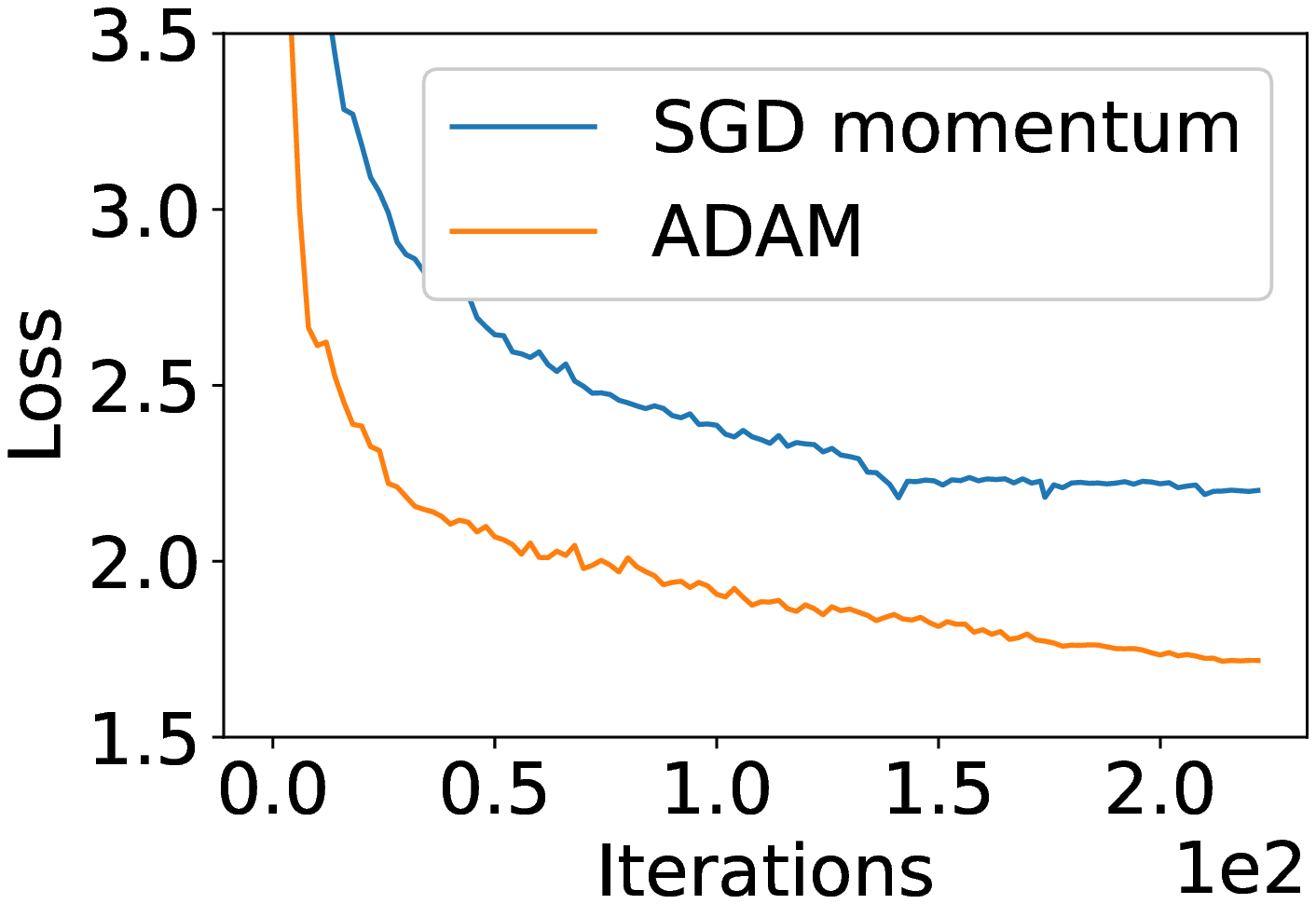}
		\caption{}\label{fig:loss-sgd-Adam}
	\end{subfigure}
    \begin{subfigure}{.24\textwidth}
	    \centering
    	\includegraphics[width=0.9\linewidth]{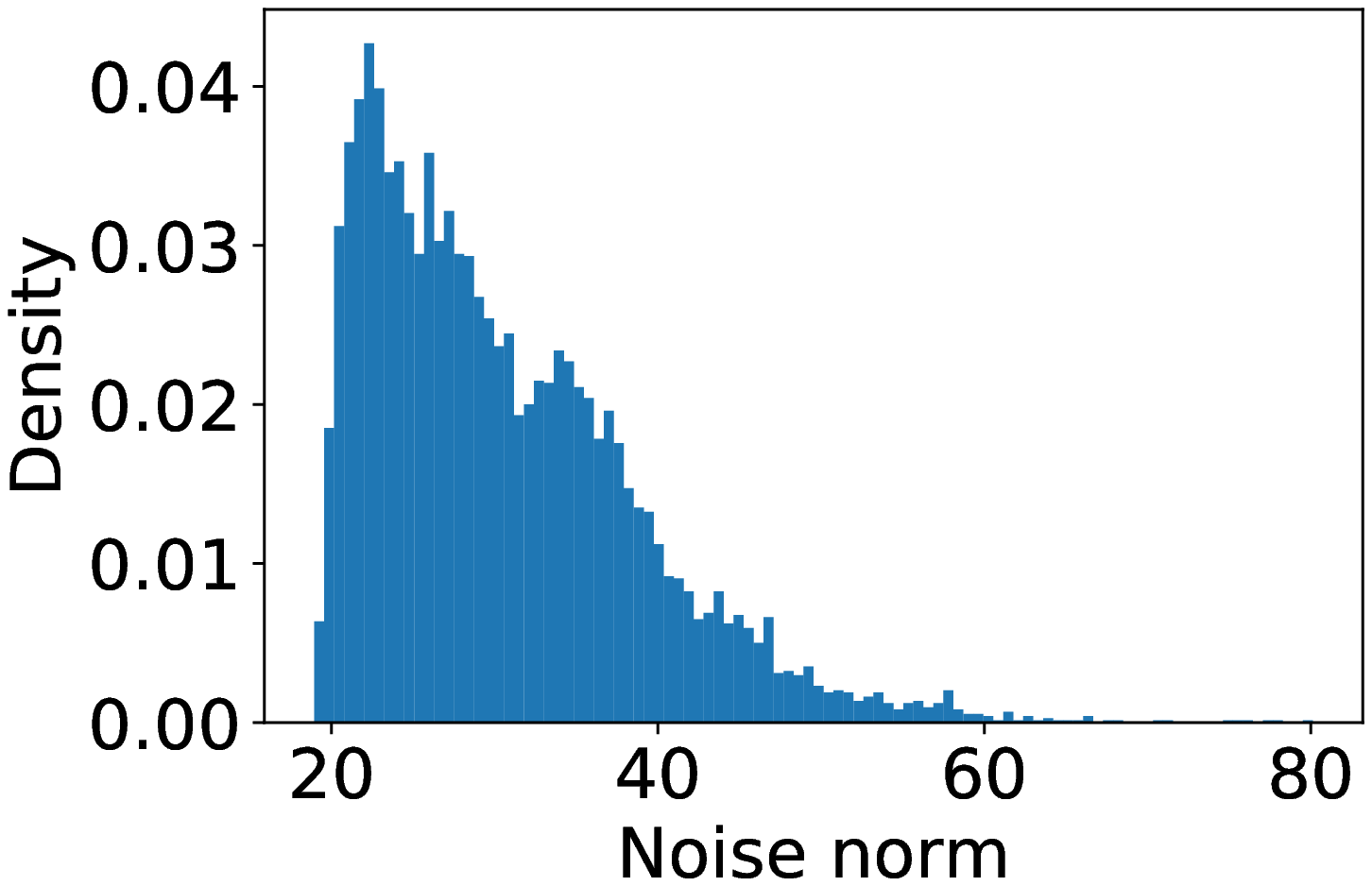}
	    \caption{Bert pretraining}
    \end{subfigure}
    \begin{subfigure}{.24\textwidth}
	    \centering
    	\includegraphics[width=0.9\linewidth]{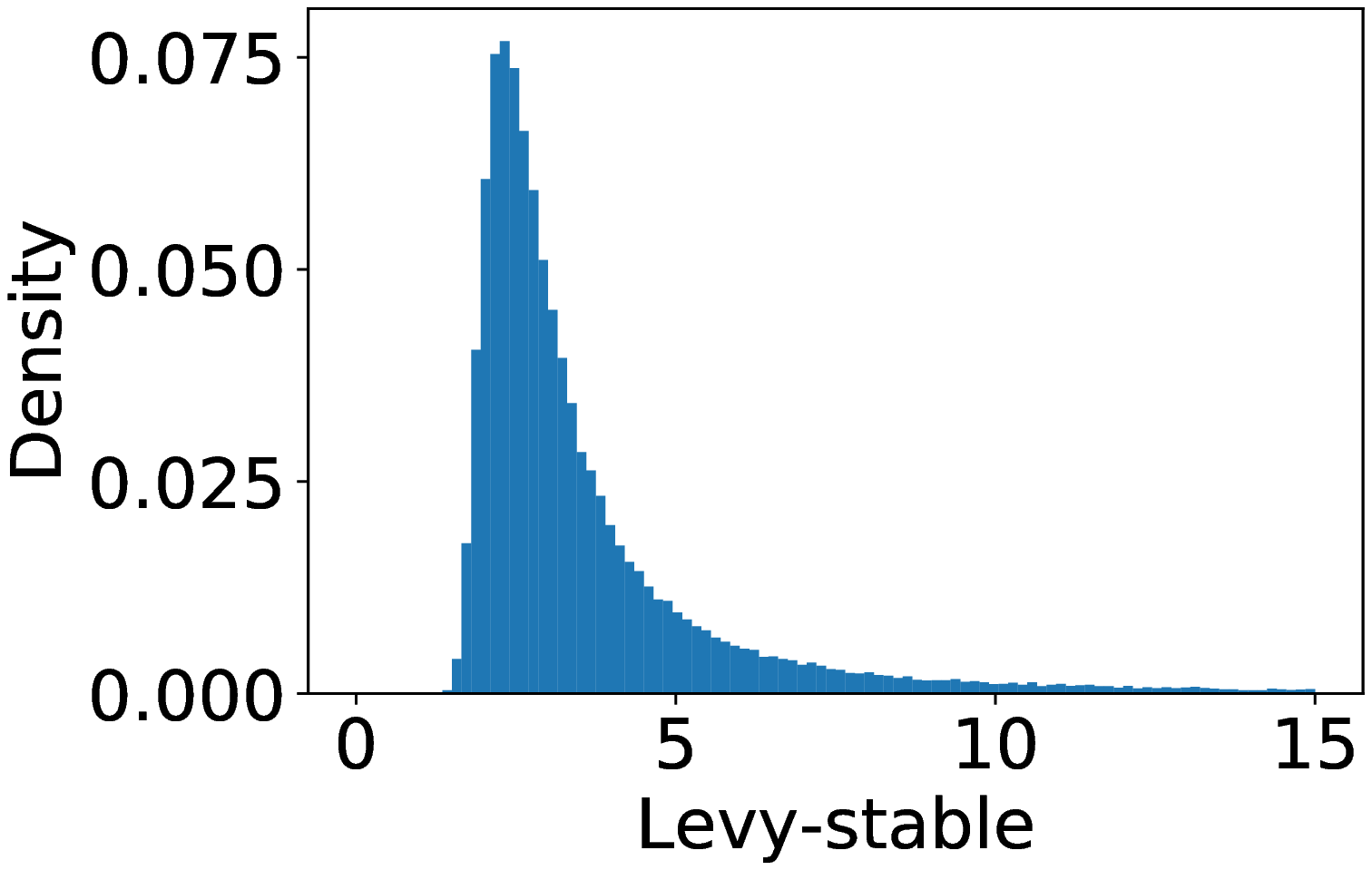}
	    \caption{Synthetic Levy-stable}
	\end{subfigure}
    \begin{subfigure}{.24\textwidth}
	    \centering
    	\includegraphics[width=0.9\linewidth]{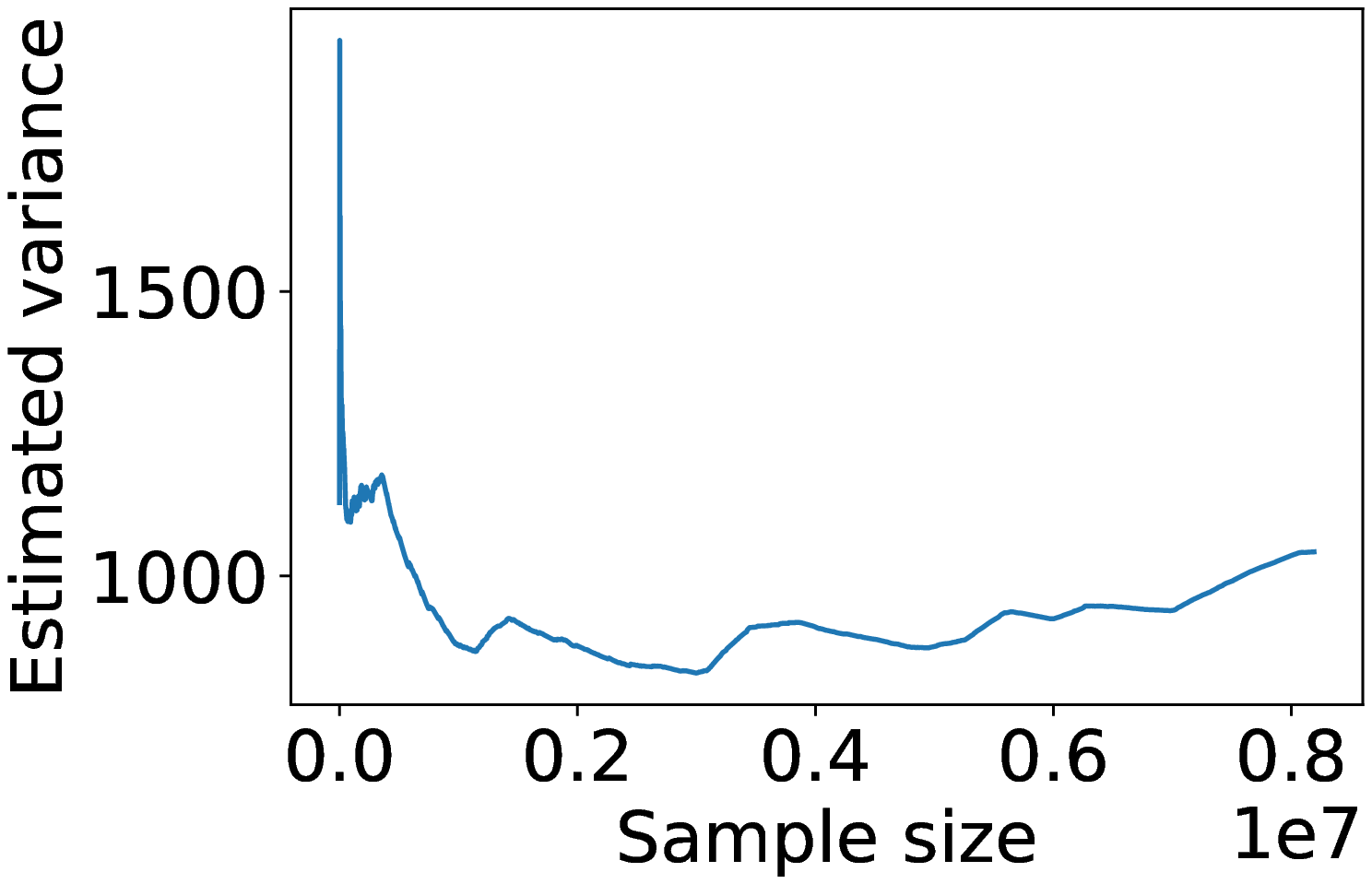}
        \caption{Bert  variance}
	\end{subfigure}
	\vspace{-0.2cm}
	\caption{(a) Validation loss for ResNet50 trained on ImageNet. SGD momentum outperforms Adam. (b) Histogram of sampled gradient noise for ResNet50 on Imagenet dataset. (c) Histogram of samples from a sum of squared Gaussians. (d) Estimated variance of the stochastic gradient for Resnet50. (e)Validation loss for BERT pretraining. Although hyperparameters for SGD are finetuned, a large performance gap is still observed between SGD and Adam.  (f) Histogram of sampled gradient nosie for BERT on Wikipedia+Books dataset. (g) Histogram of samples from a sum of squared $\alpha$-stable random variables. (h) Estimated variance of the stochastic gradient for BERT model.}\vspace{-0.2cm}
    \label{fig:heavy-noise}\vspace{-0.2cm}

\end{figure}







\vspace{-0.2cm}
\section{Convergence of gradient methods under heavy-tailed noise}\label{sec:thm}
\vspace{-0.2cm}

In this section we study the performance of SGD and adaptive methods under heavy-tailed noise. More precisely, we analyze algorithms of the following form
\begin{align}
	x_{k+1} = x_k - \eta_k g_k,
\end{align}
\vspace{-0.4cm}

where $x_k$ represent the current parameters, $\eta_k$ is the step size and $g_k$ is the stochastic (mini-batch) gradient evaluated at $x_k$. We show that if the stochasticity in the gradient $g_k$ is heavy-tailed, it is critical for the step sizes to be \textit{\textbf{adaptive}} i.e. $\eta_k$ must depend on the observed gradients. We propose to use one such algorithm GClip and prove that it obtains optimal convergence rates.
\vspace{-0.2cm}
\paragraph{Heavy-tailed noise.} 
Neural network training can be seen as minimizing a differentiable stochastic function $f(x) = \E_\xi [f(x, \xi)]$, where $f:\reals^d \to \reals$ can be potentially nonconvex and $\xi$ represent the mini-batches. At each iteration, we assume access to an \emph{unbiased} stochastic gradient $\E[g(x)] = \nabla f(x,\xi)$ corresponding to the parameters $x$, mini-batch $\xi$. We also need to bound how much noise is present in our stochastic gradients. In lieu of the usual bounded variance assumption, we use

\begin{assumption}[{\bf Bounded $\alpha-$moment}]
	\label{assump:alpha-moment}
	There exists positive real numbers $\alpha \in (1, 2]$ and $G > 0$ such that for all $x$,
	$\E[\|g(x)  - \nabla f(x) \|^\alpha] \le \sigma^\alpha$. We say noise is \textbf{heavy-tailed} if $\alpha < 2.$
\end{assumption}\vspace{-2mm}
The above assumption with $\alpha = 2$ corresponds to the standard variance bound, but in general is weaker. It is indeed possible (e.g. Pareto or $\alpha$-stable Levy random variables) for the variance of $g(x)$ to be unbounded, while simultaneously satisfying assumption~\ref{assump:alpha-moment} for $\alpha < 2$. One should note that even if the variance may not actually be infinite in practice, it might be too large to be practically useful. All our analyses and insights carry over to this setting as well.

The possibility that the variance is unbounded has a profound impact on the optimization process.
\begin{remark}[Nonconvergence of SGD]
 Consider the function $f(x) = x^2/2$ with noise satisfying $\E[\|g(x)  - \nabla f(x) \|^\alpha] = \sigma^\alpha$ for $\alpha < 2$, and  $\E[\|g(x)  - \nabla f(x) \|^2] = \infty$. Then, for any positive constants $\eta_k$ that do not depend on $g_k$, we have that $\E[\|\nabla f(x_k)\|^2] = \infty$.
\end{remark}
\vspace{-0.2cm}
\vspace{-0.2cm}
\begin{proof}
we denote the stochastic gradient $g_k := g(x_k) = \nabla f(x_k) + \xi_k = x_k + \xi_k$, where $\xi_k \in \mathbb{R}^d$ is a random variable with $\E\|\xi\|^2 = \infty, \E\|\xi\|^\alpha = \sigma^\alpha, \E[\xi] = \vec{0}.$  Then, $\E[\|\nabla f(x_{k+1})\|^2] = \E[\| x_{k+1} \|^2] = \E\|x_k - \eta_k g_k\|^2 = \E\|x_k - \eta_k (x_k + \xi)\|^2  = \E\|(1-\eta_k)x_k - \eta_k \xi \|^2 = \E\|(1-\eta_k)x_k\|^2 - 2 (1-\eta_k)\eta_k x_k^\top\E[\xi] + \eta_k^2 \E\| \xi \|^2 \geq \eta_k^2 \E\| \xi \|^2 = \infty. $ Note that this holds for \emph{any} fixed $\eta_k > 0$ even if allowed to depend on the statistics of the noise distribution (such as $\sigma$ or $\alpha$).
\end{proof}

The issue is that SGD is easily influenced by a single-stochastic gradient, which could be very large and incorrect. A simple strategy to circumvent this issue is to use a biased \emph{clipped} stochastic gradient estimator. This allows us to circumvent the problem of unbounded variance and ensures optimal convergence rates even under heavy-tailed noise. Our results are summarized in Table~\ref{tab:theory-summary}, and all proofs are relegated to the Appendices.

\begin{table}[]
\begin{center}
\caption{Error bounds ($f(x) - f^*$ for convex functions, $\|\nabla f(x)\|$ for nonconvex functions) after $k$ iterations: Define $\alpha$-moment as $\E[\|g(x) -  \nabla f(x)\|^\alpha] \leq \sigma^\alpha$ (Assump~\ref{assump:alpha-moment}) in the smooth nonconvex case and $\E[\|g(x) \|^\alpha] \leq G^\alpha$ (Assump~\ref{assump:alpha-moment-cvx}) in the strongly  case. In the standard setting ($\alpha =2$), GClip recovers the optimal rates. For heavy-tailed noise ($\alpha \in (1,2)$), GClip converges both for convex (Thm~\ref{thm:strongly-convex-convergence}) and non-convex functions (Thm~\ref{thm:nonconvex-convex-convergence}). We also show matching lower-bounds for all $\alpha \in (1,2]$ proving the optimality of clipping methods (Thm~\ref{thm:lower-bound}). 
}\vspace{0.2cm}
\begin{tabular}{@{}lllll@{}}
\toprule
                    & \multicolumn{2}{c}{Strongly Convex Function}   & \multicolumn{2}{c}{Non-Convex Function}  \\ \cmidrule(r){2-3}         \cmidrule(r){4-5}
     & Heavy-tailed noise                                                 & Standard noise                                  &Heavy-tailed noise                                                  & Standard noise                                     \\
    & ($\alpha \in (1,2)$)                                                & ($\alpha \geq 2$)                                  & ($\alpha \in (1,2)$)                                                  & ($\alpha \geq 2$)                                      \\\midrule
SGD                          & {N/A}                      & $\Oc\rbr[\big]{k^{-1}}$                                      & {N/A}                      & $\Oc\rbr[\big]{ k^{-\frac{1}{4}}}$                        \\
GClip                        & $\Oc\rbr[\big]{ k^{\frac{-(\alpha - 1)}{\alpha}}}$                                     & $\Oc\rbr[\big]{ k^{-1}}$                                      & $\Oc\rbr[\big]{k^{\frac{-(\alpha - 1)}{3\alpha -2}}}$                                     & $\Oc\rbr[\big]{ k^{-\frac{1}{4}}}$                                       \\
LowerBound                        & $\Omega\rbr[\big]{ k^{\frac{-(\alpha - 1)}{\alpha}}}$                                     & $\Omega\rbr[\big]{ k^{-1}}$                                      & $\Omega\rbr[\big]{k^{\frac{-(\alpha - 1)}{3\alpha -2}}}$                                     & $\Omega\rbr[\big]{ k^{-\frac{1}{4}}}$                                       \\
\bottomrule
\end{tabular}\label{tab:theory-summary}
\end{center}\vspace{-0.2cm}\vspace{-0.2cm}

\end{table}

\vspace{-0.2cm}
\subsection{Convergence of  Clipped Methods}\label{sec:gclip-rate}
A simple clipping strategy is to globally clip the norm of the update to threshold $\tau_k$:
\vspace{-0.1cm}
\begin{align}
x_{k+1} &=x_k - \eta_k \min\cbr[\big]{\tfrac{\tau_k}{\|g_k\|}, 1}g_k\,,\ \tau_k \in \mathbb{R}_{\geq 0}  \label{eqn:g-clip}\tag{GClip}
\end{align}
We refer to this strategy as GClip (Global Clip), as opposed to coordinate-wise clipping which we discuss later. We first state the rates for smooth non-convex functions.

\begin{theorem}[{\bf Non-convex convergence}]\label{thm:nonconvex-convex-convergence}
	Suppose that $f$ is $L$-smooth and that the stochastic gradients satisfy Assumption~\ref{assump:alpha-moment} for $\alpha \in (1,2]$. Let $\{x_k\}$ be the iterates of GClip with parameters $\eta_k = \eta = \min\{\frac{1}{4L}, \frac{\sigma^\alpha}{L\tau^\alpha} , \frac{1}{24L\tau}\}$ and $\tau_k = \tau = \max\{2, 48^{1/(\alpha-1)}\sigma^{\alpha/(\alpha - 1)},  8\sigma, \rbr[\big]{\frac{f_0}{\sigma^2K}}^{\frac{\alpha}{3\alpha - 2}} \big/ L^{\frac{2\alpha - 2}{3\alpha - 2}}, \} $. Then for $F_0 := f(x_0) - f^*$,
	\vspace{-0.2cm}
	\begin{align*}
	\frac{1}{K}\sum_{k=1}^K &\E[\min\{\norm{\nabla f(x_{k})}, \norm{\nabla f(x_{k})}^2\}] =\mathcal{O}(K^{\frac{-2\alpha+2}{3\alpha - 1}}),.
	\end{align*}
\end{theorem}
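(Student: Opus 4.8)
The plan is to run the standard descent-lemma argument for $L$-smooth functions, but carefully tracking the bias introduced by clipping in addition to the variance. First I would write $\hat g_k = \min\{\tau/\|g_k\|,1\}g_k$ for the clipped gradient and decompose the progress: by $L$-smoothness,
\begin{align*}
f(x_{k+1}) \le f(x_k) - \eta\inp{\nabla f(x_k)}{\hat g_k} + \tfrac{L\eta^2}{2}\|\hat g_k\|^2.
\end{align*}
Taking conditional expectation, the key is to lower bound $\E[\inp{\nabla f(x_k)}{\hat g_k}]$. Writing $\hat g_k = g_k - (g_k - \hat g_k)$ and using unbiasedness of $g_k$, this equals $\|\nabla f(x_k)\|^2 - \inp{\nabla f(x_k)}{\E[g_k - \hat g_k]}$, so I need to control the clipping bias $\|\E[g_k - \hat g_k]\| = \|\E[(1-\tau/\|g_k\|)_+ g_k]\|$. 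Since clipping only acts when $\|g_k\| > \tau$, and on that event $\|g_k - \hat g_k\| \le \|g_k\| \le \|g_k\|^\alpha/\tau^{\alpha-1}$ (as $\|g_k\|^{\alpha-1}\ge\tau^{\alpha-1}$ there), a Markov-type bound gives $\|\E[g_k-\hat g_k]\| \le \E[\|g_k - \nabla f(x_k)\|^\alpha + \ldots]/\tau^{\alpha-1}$; when $\tau \ge 2\|\nabla f(x_k)\|$ this is $\lesssim \sigma^\alpha/\tau^{\alpha-1}$, and the inner product term is absorbed by a fraction of $\|\nabla f(x_k)\|^2$ via Young's inequality. The reason the theorem states the bound on $\min\{\|\nabla f\|,\|\nabla f\|^2\}$ rather than $\|\nabla f\|^2$ is precisely to handle the regime where $\|\nabla f(x_k)\| > \tau/2$: there, clipping is genuinely biased toward a fixed magnitude $\approx \tau$, and one only recovers linear-in-$\|\nabla f\|$ progress.

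Next I would bound the second-moment term: $\E[\|\hat g_k\|^2] \le \tau^{2-\alpha}\E[\|\hat g_k\|^\alpha] \le \tau^{2-\alpha}\E[\|g_k\|^\alpha] \lesssim \tau^{2-\alpha}(\|\nabla f(x_k)\|^\alpha + \sigma^\alpha)$, using $\|\hat g_k\| \le \tau$ and $\|\hat g_k\| \le \|g_k\|$. With the stated choice $\eta = \min\{\tfrac{1}{4L},\tfrac{\sigma^\alpha}{L\tau^\alpha},\tfrac{1}{24L\tau}\}$, the $\tfrac{L\eta^2}{2}\|\hat g_k\|^2$ term is dominated: the $\eta \le 1/(24L\tau)$ piece tames the $\tau^{2-\alpha}\|\nabla f\|^\alpha \le \tau\|\nabla f\|$-type contribution against the linear progress term, and $\eta \le \sigma^\alpha/(L\tau^\alpha)$ makes the leftover $\lesssim \eta\sigma^{2\alpha}/\tau^\alpha$, i.e. of the same order as $\eta\sigma^\alpha/\tau^{\alpha-1}$ up to the constraint $\tau \ge 8\sigma$. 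Collecting terms and rearranging, one telescopes over $k=1,\dots,K$:
\begin{align*}
\frac{1}{K}\sum_{k=1}^K \E[\min\{\|\nabla f(x_k)\|,\|\nabla f(x_k)\|^2\}] \lesssim \frac{F_0}{\eta K} + \frac{\sigma^\alpha}{\tau^{\alpha-1}}.
\end{align*}

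Finally I would plug in the threshold $\tau$. The dominant constraint on $\tau$ as $K\to\infty$ is $\tau \gtrsim (F_0/(\sigma^2 K))^{\alpha/(3\alpha-2)}/L^{(2\alpha-2)/(3\alpha-2)}$, which balances the two terms $F_0/(\eta K)$ (with $\eta \asymp 1/(L\tau)$ the binding branch for large $\tau$) and $\sigma^\alpha/\tau^{\alpha-1}$. Substituting, $F_0/(\eta K) \asymp LF_0\tau/K$ and setting $LF_0\tau/K \asymp \sigma^\alpha/\tau^{\alpha-1}$ gives $\tau^\alpha \asymp \sigma^\alpha K/(LF_0)$, hence the rate $\sigma^\alpha/\tau^{\alpha-1} \asymp \sigma(LF_0/K)^{(\alpha-1)/\alpha}$ — but one must check which of the several terms defining $\tau$ and $\eta$ actually binds, and the exponent in the theorem, $K^{(-2\alpha+2)/(3\alpha-1)}$, comes from a more delicate balancing that also accounts for the $\min$ structure and the $\eta \le \sigma^\alpha/(L\tau^\alpha)$ branch. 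The main obstacle is this bookkeeping: keeping the three-way minimum in $\eta$, the five-way maximum in $\tau$, and the $\min\{\|\nabla f\|,\|\nabla f\|^2\}$ progress measure all consistent so that every cross term is provably absorbed, and then extracting the stated exponent from the optimal choice of $\tau$. Everything else is routine.
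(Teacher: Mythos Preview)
Your high-level plan --- split into two cases according to whether $\|\nabla f(x_k)\|$ lies below or above $\tau/2$, use a bias--variance lemma for the clipped estimator in the small-gradient case, telescope, then balance $\tau$ --- is exactly the paper's approach. Case~1 and the second-moment bookkeeping go essentially as you describe.

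The genuine gap is Case~2. You correctly identify that when $\|\nabla f(x_k)\| > \tau/2$ one can only hope for progress linear in $\|\nabla f\|$, but you give no mechanism for actually proving $\E[\inp{\nabla f(x_k)}{\hat g_k}] \gtrsim \|\nabla f(x_k)\|$ there. This is not routine: on the event $\{\|g_k\| > \tau\}$ the update direction is the \emph{normalized} vector $\tau\, g_k/\|g_k\|$, and under heavy-tailed noise $g_k/\|g_k\|$ need not point close to $\nabla f/\|\nabla f\|$. The paper closes this by invoking a lemma of Cutkosky and Orabona, namely
\[
\inp[\big]{v/\|v\|}{\nabla f} \ge \tfrac{1}{3}\|\nabla f\| - \tfrac{8}{3}\|v - \nabla f\| \quad \text{for any } v,
\]
applied to the clipped part, together with a careful indicator-splitting argument on the unclipped part (splitting further on $\{\|g_k - \nabla f\| \le \tau/4\}$). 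Taking expectations, using $\E\|g_k - \nabla f\| \le \sigma$, and invoking the lower bounds $\tau \ge 8\sigma$ and $\tau \ge 48^{1/(\alpha-1)}\sigma^{\alpha/(\alpha-1)}$ then yields $\E[\inp{\nabla f}{\hat g_k}] \ge \|\nabla f\|/12$. Your proposal treats this step as a black box; without this lemma or an equivalent, Case~2 does not close.

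Two smaller points. First, after Young's inequality the bias enters the telescoped sum as $\|b_k\|^2 \lesssim \sigma^{2\alpha}/\tau^{2\alpha-2}$, not $\sigma^\alpha/\tau^{\alpha-1}$; your one-term error bound conflates these. Second, your confusion over the final exponent is partly because the theorem as printed contains a typo: the denominator should be $3\alpha-2$ (as in the Remark and Table~1), not $3\alpha-1$. With the binding branch $\eta \asymp \sigma^\alpha/(L\tau^\alpha)$ and $\tau \asymp \sigma K^{1/(3\alpha-2)}$, both $F_0/(\eta K)$ and $\sigma^{2\alpha}/\tau^{2\alpha-2}$ are $\Oc(K^{-(2\alpha-2)/(3\alpha-2)})$, which is the intended rate.
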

\begin{remark}
When $\|\nabla f(x_k)\| \le \epsilon \ll 1$,  $\norm{\nabla f(x_{k})}^2 \ll \norm{\nabla f(x_{k})}$. Hence the dominant term on the left hand side of the inequality above is $\norm{\nabla f(x_{k})}^2$. The right hand side is easily observed to be $\Oc(K^{^{-\frac{2(\alpha-1)}{3\alpha - 2}}})$. Together, this implies a convergence rate of $\E \|\nabla f(x)\| \leq \Oc(K^{^{-\frac{(\alpha-1)}{3\alpha - 2}}})$.
\end{remark}
We prove improved rates of convergence for non-smooth \emph{strongly-convex} functions in a bounded domain. Due to limited space, we relegate the definitions and assumptions to Appendix~\ref{sec:app-assump}.
\begin{theorem}[{\bf Strongly-convex convergence}]\label{thm:strongly-convex-convergence}
	Suppose that  the stochastic gradients satisfy Assumption~\ref{assump:alpha-moment-cvx} for $\alpha \in (1,2]$. Let $\{x_k\}$ be the iterates of projected GClip~\eqref{eqn:proj-g-clip} with clipping parameter $\tau_k = G k^{\alpha -1}$ and steps-size $\eta_k = \frac{4}{\mu (k+1)}$. Define the output to be a $k$-weighted combination of the iterates:   
	$
	\bar x_{k} = \sum_{j=1}^{k} j x_{j-1} / (\sum_{j=1}^{k} j)\,.
	$
	Then the output $\bar x_k$ satisfies:
	\[
	\E[f(\bar x_k)] - f(x^\star) \leq \tfrac{16 G^2}{\mu(k+1)^{2(\alpha-1)/\alpha}}\,.
	\]
\end{theorem}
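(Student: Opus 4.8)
The plan is to treat this as a clipped version of the textbook projected stochastic \emph{sub}gradient analysis for $\mu$-strongly convex objectives. Write $\hat g_k := \min\{\tau_k/\|g_k\|,\,1\}\,g_k$ for the clipped gradient and $b_k := \E[\hat g_k \mid x_k] - \nabla f(x_k)$ for its bias, where $\nabla f(x_k)\in\partial f(x_k)$ is the subgradient the oracle estimates, $\E[g_k\mid x_k] = \nabla f(x_k)$. The conceptual point is that $\hat g_k$ is biased, but clipping forces $\|\hat g_k\|\le\tau_k$ deterministically, so its conditional second moment is finite even though $g_k$ only has a finite $\alpha$-moment (Assumption~\ref{assump:alpha-moment-cvx}). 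I would first record two elementary consequences of $\E[\|g_k\|^\alpha\mid x_k]\le G^\alpha$. Since $\|\hat g_k - g_k\| = (\|g_k\|-\tau_k)_+$, Jensen gives the bias bound
\[
\|b_k\| \;\le\; \E\big[(\|g_k\|-\tau_k)_+ \,\big|\, x_k\big] \;\le\; \E\big[\|g_k\|\,\indicator{\|g_k\|>\tau_k} \,\big|\, x_k\big] \;\le\; \frac{\E[\|g_k\|^\alpha\mid x_k]}{\tau_k^{\alpha-1}} \;\le\; \frac{G^\alpha}{\tau_k^{\alpha-1}},
\]
using $t\,\indicator{t>\tau}\le t^\alpha/\tau^{\alpha-1}$ for $\alpha>1$; and, since $\min\{t,\tau\}^2\le\tau^{2-\alpha}t^\alpha$ for $\alpha\le2$, the clipped second moment obeys $\E[\|\hat g_k\|^2\mid x_k]\le\tau_k^{2-\alpha}\,\E[\|g_k\|^\alpha\mid x_k]\le\tau_k^{2-\alpha}G^\alpha$.

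Second, I would set up the one-step distance recursion. Projection onto the (bounded, convex) feasible set is non-expansive and $x^\star$ is feasible, so $\|x_{k+1}-x^\star\|^2\le\|x_k-x^\star\|^2-2\eta_k\langle\hat g_k,x_k-x^\star\rangle+\eta_k^2\|\hat g_k\|^2$. Take conditional expectation, substitute $\E[\hat g_k\mid x_k]=\nabla f(x_k)+b_k$, invoke strong convexity via $\langle\nabla f(x_k),x_k-x^\star\rangle\ge f(x_k)-f(x^\star)+\tfrac\mu2\|x_k-x^\star\|^2$, and absorb the cross term with Young's inequality $2\eta_k\|b_k\|\,\|x_k-x^\star\|\le\tfrac{\mu\eta_k}{2}\|x_k-x^\star\|^2+\tfrac{2\eta_k}{\mu}\|b_k\|^2$. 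With $r_k:=\|x_k-x^\star\|$ and $\delta_k:=\E f(x_k)-f(x^\star)$ this yields
\[
2\eta_k\,\delta_k \;\le\; \Big(1-\tfrac{\mu\eta_k}{2}\Big)\E r_k^2 \;-\; \E r_{k+1}^2 \;+\; \tfrac{2\eta_k}{\mu}\E\|b_k\|^2 \;+\; \eta_k^2\,\E\|\hat g_k\|^2 .
\]
The step size $\eta_k=\tfrac{4}{\mu(k+1)}$ is tuned precisely so that $1-\tfrac{\mu\eta_k}{2}=\tfrac{k-1}{k+1}$; the extra factor $2$ over the classical choice $\tfrac{2}{\mu(k+1)}$ is exactly what pays for the Young-inequality term created by the bias.

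Third, I would run the standard weighted telescoping (Lacoste-Julien--Schmidt--Bach): multiply the displayed inequality by $\tfrac{\mu(k+1)}{8}=\tfrac{1}{2\eta_k}$ and then by a linear weight, so that the distance terms become $\propto k(k-1)\E r_k^2$ at step $k$ and $\propto (k+1)k\,\E r_{k+1}^2$ at step $k+1$, which cancel in the sum. Summing, dropping the nonnegative leftover distance terms, and using Jensen with $\tsum_{j=1}^{K} j=\tfrac{K(K+1)}{2}$ produces exactly the weighting $\bar x_K=\tsum_j j\,x_{j-1}\big/\tsum_j j$ of the statement together with the bound
\[
\E f(\bar x_K)-f(x^\star) \;\le\; \frac{2}{K(K+1)}\Big(\tfrac{1}{\mu}\tsum_{k} k\,\E\|b_k\|^2 \;+\; \tfrac{2}{\mu}\tsum_{k}\E\|\hat g_k\|^2\Big).
\]
Finally I substitute the prescribed clipping schedule (which, to balance the two accumulated error terms, must scale like $\tau_k\asymp G\,k^{1/\alpha}$): Step~1 then gives $\E\|b_k\|^2\lesssim G^2k^{-2(\alpha-1)/\alpha}$ and $\E\|\hat g_k\|^2\lesssim G^2k^{(2-\alpha)/\alpha}$, so $\tsum_k k\,\E\|b_k\|^2$ and $\tsum_k\E\|\hat g_k\|^2$ are both $\lesssim G^2(K+1)^{2/\alpha}$ (the common exponent being $2/\alpha-1\in[0,1)$); dividing by $K(K+1)$ leaves $\Oc\big(G^2/(\mu(K+1)^{2(\alpha-1)/\alpha})\big)$, and tracking the constants tightly gives the stated $16$.

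I expect the main obstacle to be this bias--variance tradeoff governed by the clipping schedule: $\tau_k$ must be large enough that the accumulated squared bias $\tsum_k k\,G^{2\alpha}/\tau_k^{2(\alpha-1)}$ is not the bottleneck, yet small enough that the accumulated clipped second moment $\tsum_k\tau_k^{2-\alpha}G^\alpha$ is not either, and it takes the particular exponent $1/\alpha$ for both accumulated terms to grow at the same rate $K^{2/\alpha}$ and hence produce the exponent $2(\alpha-1)/\alpha$. The only other delicate point is that the argument must go through with nothing more than a stochastic \emph{subgradient} oracle and no smoothness, which is why it is organized entirely around the distance recursion and strong convexity rather than around a function-value descent lemma.
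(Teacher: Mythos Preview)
Your proposal is correct and follows essentially the same route as the paper's proof: the same bias/second-moment bounds for the clipped estimator (the paper's Lemma~\ref{lem:g-clip-bias-variance}), the same projected distance recursion combined with $\mu$-strong convexity and a Young-inequality splitting of the bias cross term, and the same Lacoste-Julien--Schmidt--Bach linear weighting to telescope. You also correctly identify that the clipping schedule balancing the two error sums must be $\tau_k\asymp G\,k^{1/\alpha}$; this is exactly what the paper uses in its proof (the $\tau_k=G k^{\alpha-1}$ in the theorem statement is a typo there).
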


The rates of convergence for the strongly convex and non-convex cases in Theorem~\ref{thm:strongly-convex-convergence} and Theorem \ref{thm:nonconvex-convex-convergence} exactly match those of the usual SGD rates ($\Oc(1/\sqrt{k})$ for convex and $\Oc(k^{-\frac{1}{4}})$ for non-convex) when $\alpha = 2$ and gracefully degrade for $\alpha \in (1,2]$. As we will next show, both the strongly convex rates and non-convex rates of GClip are in fact optimal for every $\alpha \in (1,2]$. 

\vspace{-0.2cm}
\subsection{Theoretic lower bounds}\label{sec:lower}
\vspace{-0.2cm}
We prove that the rates obtained with GClip are optimal up to constants. First, we show a strong lower-bound for the class of convex functions with stochastic gradients satisfying $\E[|g(x)|^\alpha] \leq 1$. 
This matches the upper bounds of Theorems~\ref{thm:strongly-convex-convergence} and~\ref{thm:c-clip-convergence} for strongly-convex functions, showing that the simple clipping mechanism of GClip is (up to constants) information theoretically optimal, providing a strong justification for its use.
\begin{theorem}\label{thm:lower-bound}
	For any $\alpha \in (1,2]$ and any (possibly randomized) algorithm $\Ac$, there exists a problem $f$ which is 1-strongly convex and 1-smooth ($\mu =1$ and $L=1$), and stochastic gradients which satisfy Assumptions~\ref{assump:alpha-moment-cvx} with $G \leq 1$ such that the output $x_k$ of the algorithm $\Ac$ after processing $k$ stochastic gradients has an error
	\[
	\E[f(x_k)] - f(x^\star) \geq \Omega\bigl(\tfrac{1}{k^{2(\alpha -1)/\alpha}}\bigr)\,.
	\]
\end{theorem}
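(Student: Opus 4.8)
The plan is to prove the lower bound via an information-theoretic reduction to a binary hypothesis testing (estimation) problem, in the style of classical minimax lower bounds for stochastic optimization. First I would construct a one-dimensional hard family: take $f_v(x) = \tfrac12(x - v)^2$ for an unknown parameter $v \in \{-\delta, +\delta\}$, so that each $f_v$ is $1$-strongly convex and $1$-smooth and has minimizer $x^\star = v$. The value gap is $f_v(x) - f_v(x^\star) = \tfrac12(x-v)^2$, so any $x$ that is far from $v$ (in particular any $x$ that cannot distinguish the sign of $v$) incurs error $\Omega(\delta^2)$. The crux is to design a stochastic gradient oracle at a queried point $x$ that (i) is unbiased for $\nabla f_v(x) = x - v$, (ii) satisfies the $\alpha$-moment bound $\E[|g(x)|^\alpha] \le G^\alpha$ with $G \le 1$, and (iii) leaks as little information about $v$ as possible, so that after $k$ queries the two hypotheses $v = +\delta$ and $v = -\delta$ remain statistically close.

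The key construction for the oracle is a heavy-tailed noise distribution supported on a "large but rare" event. Concretely, at a query $x$ I would return $g(x) = \nabla f_v(x) + \xi$ where, with probability $1-p$, $\xi$ is a small correction and, with probability $p$, $\xi$ takes a large value of magnitude $\sim p^{-1/\alpha}$; choosing $p = \Theta(k^{-1})$ (or more precisely tuning $p$ as a function of $k$) makes the $\alpha$-th moment $p \cdot p^{-1} = O(1)$, so Assumption~\ref{assump:alpha-moment-cvx} holds with $G \le 1$, while the probability that \emph{any} of the $k$ samples reveals the "informative" large deviation is $\le kp = O(1)$ but not $\to 1$. Equivalently, one bounds the KL divergence between the product distributions of the $k$ observations under $v = +\delta$ versus $v = -\delta$: the per-query KL is $O(\delta^2 \cdot p^{(\alpha - 2)/\alpha})$ or similar, and $k$ times this should be $O(1)$. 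Setting $\delta^2 \asymp k^{-2(\alpha-1)/\alpha}$ makes the total KL a constant, so by Le Cam's two-point method (Pinsker plus the testing inequality) no algorithm can identify $v$ with probability bounded away from $1/2$, hence $\E[f_v(x_k)] - f_v(x^\star) \ge \Omega(\delta^2) = \Omega(k^{-2(\alpha-1)/\alpha})$.

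The steps in order: (1) fix the two-point family $\{f_{+\delta}, f_{-\delta}\}$ and reduce "small optimization error" to "correct identification of $\mathrm{sign}(v)$"; (2) define the stochastic gradient oracle with the heavy-tailed two-scale noise, verify unbiasedness and the $\alpha$-moment bound with the stated constant $G \le 1$; (3) compute (or upper bound) the KL divergence between the law of the transcript $(x_1, g_1, \dots, x_k, g_k)$ under the two hypotheses, using the chain rule for KL over the $k$ rounds and the fact that the algorithm's queries are measurable functions of past observations (so the interaction does not inflate the divergence beyond the sum of per-round conditional KLs); (4) tune $p$ and $\delta$ so the total KL is a universal constant, and apply Le Cam to conclude.

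The main obstacle I expect is step (3) combined with the oracle design in step (2): one must choose the noise distribution so that it is simultaneously heavy-tailed enough to only cost $G = O(1)$ in $\alpha$-moment, yet carries enough Fisher-type information about $v$ that the resulting KL lower bound (not upper bound — we need it small, so really we need an \emph{upper} bound on KL) scales exactly as $\delta^2 k^{(\alpha-2)/\alpha}$ up to constants; getting the exponent $2(\alpha-1)/\alpha$ rather than something weaker requires the per-query KL to degrade by precisely the factor $p^{(\alpha-2)/\alpha}$, which is the delicate calculation. A secondary subtlety is ensuring the construction works for \emph{randomized} algorithms and respects the bounded-domain assumption used in Theorem~\ref{thm:strongly-convex-convergence}; both are handled by noting Le Cam's method is insensitive to internal randomization (condition on it) and by taking $\delta$ small enough that $x^\star = \pm\delta$ lies well inside the domain.
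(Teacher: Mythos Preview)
Your overall strategy---a two-point quadratic family, reduction to binary hypothesis testing, bounding the total KL via Pinsker/Le Cam, then tuning the parameters---is exactly the route the paper takes, and the target scalings you write down (per-query KL $\asymp \delta^2 p^{(2-\alpha)/\alpha}$, $p \asymp k^{-1}$, $\delta^2 \asymp k^{-2(\alpha-1)/\alpha}$) are the correct ones.

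The genuine gap is in the oracle construction. Writing $g(x) = \nabla f_v(x) + \xi$ with a \emph{fixed} heavy-tailed $\xi$ (independent of $v$) does not yield the KL you want. If $\xi$ is discrete two-point, the supports of $g$ under $v=+\delta$ and $v=-\delta$ are generically disjoint and a single sample identifies $v$; if $\xi$ is a continuous heavy-tailed density with finite location Fisher information (as most do), the KL between the two $2\delta$-shifted laws is $\Theta(\delta^2)$ with no $p$-dependence, so $k$ queries force $\delta^2 = O(1/k)$---the ordinary bounded-variance rate. The missing idea, which is the crux of the paper's construction, is to encode $v$ in the \emph{probability} of the rare event rather than as a location shift: the paper takes $g(x)\in\{x,\ x - \tfrac{1}{2\gamma}\}$ with the \emph{same support} under both hypotheses but Bernoulli parameter $p_\nu = 2\gamma b_\nu$ depending on the unknown $b_\nu$. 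Then distinguishing the hypotheses is exactly distinguishing $\ber(p_0)$ from $\ber(p_1)$ with $p_0-p_1 \asymp \gamma\epsilon$ and $p_0 \asymp \gamma^\alpha$, giving per-query KL $\asymp (\gamma\epsilon)^2/\gamma^\alpha = \epsilon^2\gamma^{2-\alpha}$, which is precisely the extra $p^{(2-\alpha)/\alpha}$ factor you need; meanwhile the $\alpha$-moment stays bounded because $p_\nu\cdot\gamma^{-\alpha} = O(1)$. Once you make this change, your steps (3)--(4) go through (the paper further uses Yao's principle plus a bijection to reduce to i.i.d.\ Bernoulli samples rather than the interactive chain rule you propose, but either device works).
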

Next, we examine non-convex functions.
\begin{theorem}\label{thm:lower-bound-nonconvex}
	Given any $\alpha \in (1,2]$, smoothness constant $L$, and (possibly randomized) algorithm $\Ac$, there exists a constant $c_1$ and an $L$-smooth function $f$ with stochastic gradients satisfying Assumption~\ref{assump:alpha-moment} for any given $\sigma \ge c_1 \sqrt{(f(0) - f^*)L}$ such that the output $x_k$ of the algorithm $\Ac$ after processing $k$ stochastic gradients has an error
	\begin{align*}
	\E[\|\nabla f(x_k)\|] \geq \Omega\bigl(\tfrac{1}{k^{(\alpha - 1)/(3\alpha - 2)}}\bigr)\,.
	\end{align*}
\end{theorem}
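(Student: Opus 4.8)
The plan is to prove this by a reduction to a statistical indistinguishability argument on a hard instance built from the ``probability-$p$ zero-chain'' paradigm used in non-convex stochastic lower bounds (Carmon--Duchi--Hinder--Sidford, Arjevani et al.), but with the revelation probability tuned to the $\alpha$-th moment instead of the variance. First I would take the deterministic chain function $\bar f_T$ on $\reals^T$ assembled from bump functions, whose only near-stationary points require ``activating'' coordinates $1,2,\dots,T$ in order; calibrate it so that each stage contributes $\Theta(\rho^2/L)$ to the suboptimality gap and carries gradient of magnitude $\rho$ at its active coordinate, so that $T \asymp L\Delta/\rho^2$ stages give $\bar f_T(0)-\bar f_T^* = \Delta$ with $L$-smoothness. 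To defeat construction-aware algorithms, compose with a uniformly random rotation $U$ and set $f(x) = \bar f_T(Ux)$, forcing the algorithm to discover the active directions only through the oracle.

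Next I would specify the stochastic oracle. When queried at $x$ with current progress $i=\mathrm{prog}(Ux)$, it returns the exact gradient except that the component along the ``next'' direction $u_{i+1}$ is \emph{stochastically revealed}: it equals $0$ with probability $1-p$ and equals $\langle\nabla f(x),u_{i+1}\rangle/p$ with probability $p$ (fresh randomness each call), while all of $u_{i+2},\dots$ get $0$ (the zero-chain property). This is unbiased, is supported on the single direction $u_{i+1}$, and its $\alpha$-th central moment is $\Theta(\rho^\alpha/p^{\alpha-1})$; choosing $p \asymp (\rho/\sigma)^{\alpha/(\alpha-1)}$ makes this $\le \sigma^\alpha$, verifying Assumption~\ref{assump:alpha-moment} (with $c_1$ chosen so that $p\le 1$). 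The standard zero-chain lemma then shows $\mathrm{prog}(x_t)$ cannot exceed the number of revelations observed, which is $\lesssim pk$ with high probability over $U$ and the oracle. Hence if $T \gg pk$ --- equivalently $k \ll T/p \asymp L\Delta\,\sigma^{\alpha/(\alpha-1)}\,\rho^{-(3\alpha-2)/(\alpha-1)}$ --- the output still has an un-activated direction and $\|\nabla f(x_k)\|\ge\rho$. Solving this budget inequality for the largest admissible $\rho$ gives $\rho\asymp (L\Delta\,\sigma^{\alpha/(\alpha-1)}/k)^{(\alpha-1)/(3\alpha-2)}$, which with $L$, $\Delta=f(0)-f^*$, $\sigma$ fixed is $\Omega(k^{-(\alpha-1)/(3\alpha-2)})$; the assumption $\sigma\ge c_1\sqrt{(f(0)-f^*)L}$ is exactly what ensures this stochastic obstruction dominates the weaker deterministic one ($\rho_{\det}\asymp\sqrt{L\Delta/k}$). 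Averaging over $U$ (or a standard derandomization) converts the high-probability statement into one about $\E\|\nabla f(x_k)\|$ and extracts a fixed hard $f$.

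I expect the main obstacle to be the quantitative heavy-tailed step: proving that ``passing one stage'' genuinely costs $\Omega((\sigma/\rho)^{\alpha/(\alpha-1)})$ oracle calls. A naive additive-noise construction with a smooth noise density has finite Fisher information and only yields cost $\Omega((\sigma/\rho)^2)$, hence the too-weak rate $k^{-1/4}$ for every $\alpha$; getting the correct $\alpha$-dependence forces the oracle to be atomic/probabilistic --- a small mass $p$ on a large value $\sim\rho/p$ --- so that $O(1)$ samples essentially never see the informative event and $\E|\cdot|^\alpha$ scales as $\rho^\alpha/p^{\alpha-1}$ rather than $\rho^2/p$. The delicate part is embedding such an oracle coordinate-by-coordinate into an honest $L$-smooth chain function while (i) keeping the \emph{vector} $\alpha$-moment at $\sigma^\alpha$ and (ii) preserving the zero-chain structure that underlies the $\mathrm{prog}(x_t)\lesssim pk$ bound; the remaining work is routine bookkeeping of the three scaling parameters $(\rho,T,p)$ together with the smoothness/scaling normalizations.
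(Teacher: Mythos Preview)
Your proposal is correct and follows essentially the same route as the paper: both build on the Arjevani--Carmon--Duchi--Foster--Srebro--Woodworth probability-$p$ zero-chain, replace the variance calibration of $p$ by an $\alpha$-moment calibration $p\asymp(\rho/\sigma)^{\alpha/(\alpha-1)}$ via a Bernoulli reveal of the next coordinate, and then read off $k\lesssim T/p\asymp L\Delta\,\sigma^{\alpha/(\alpha-1)}\rho^{-(3\alpha-2)/(\alpha-1)}$. The paper scales the canonical chain $f_d$ exactly as you describe (their $\epsilon$ is your $\rho$, their $d$ is your $T$), bounds the oracle's $\alpha$-moment by $O(\rho^\alpha/p^{\alpha-1})$ via the single noisy coordinate, and then cites \cite{arjevani2019lower} both for the $\mathrm{prog}(x_k)\lesssim pk$ lemma and for the random-rotation lift from zero-respecting to arbitrary randomized algorithms---precisely the two ingredients you invoke. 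Your discussion of why an atomic Bernoulli oracle (rather than smooth additive noise) is forced by the $\alpha$-moment scaling is a point the paper leaves implicit, but the constructions coincide.
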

Theorem \ref{thm:lower-bound-nonconvex}, proven in Appendix \ref{sec:non-convex-lower-proof}, extends the recent work of \citep[Theorem 1]{arjevani2019lower} to heavy-tailed noise. Here, the lower-bound matches the upper-bound in Theorem~\ref{thm:nonconvex-convex-convergence} up to constants, proving its optimality.

\vspace{-0.2cm}

\section{Faster Optimization with Adaptive Coordinate-wise Clipping}\label{sec:transformerxl}
\vspace{-0.2cm}


The previous section showed that adaptive step sizes (which depend on the gradients) are essential for convergence under heavy-tailed noise, and also showed that GClip provides the optimal rates.
There are of course other adaptive methods such as Adam which employs not only the current gradients but also \emph{all} past gradients to adaptively set \emph{coordinate-wise} step-sizes. In this section, we study why coordinate-wise clipping may yield even faster convergence than GClip, and show how to modify GClip to design an Adaptive Coordinate-wise Clipping algorithm (ACClip).


\subsection{Coordinate-wise clipping}
The first technique we use is applying coordinate-wise clipping instead of global clipping. We had previously assumed a global bound on the $\alpha$-moment of the \emph{norm} (or variance) of the stochastic gradient is bounded by $\sigma$. However, $\sigma$ might be hiding some dimension dependence $d$. We show a more fine-grained model of the noise in order to tease out this dependence.

\begin{assumption}[{\bf Coordinate-wise $\alpha$ moment}]\label{assump:coord-noise} 
	Denote $\{g_i(x)\}$ to be the coordinate-wise stochastic gradients for $i \in [d]$. We assume there exist constants $\{B_i\} \geq 0$ and $\alpha \in (1,2]$ such that $
	\E\sbr*{\abs*{g_i(x)}^{\alpha}} \leq {B}^{\alpha}_i \,.$
\end{assumption}\vspace{-0.2cm}

For the sake of convenience, we denote $B = [B_1;B_2;\cdots;B_d] \in \mathbb{R}^d$, $\norm{B}_a = \rbr{\sum B_i^a}^{1/a}$. Under this more refined assumption, we can show the following corollary:
\begin{corollary}[{\bf GClip under coordinate-wise noise}]\label{cor:coord-g-clip-convergence}
	Suppose we run GClip under Assumption      ~\ref{assump:coord-noise} to obtain the sequence $\{x_k\}$. If $f$ is $\mu$-strongly convex, with appropriate step-sizes and averaging, the output $\bar x_k$ satisfies
	\vspace{-4mm}
	\begin{align*}
	    	\quad \quad \E[f(\bar x_k)] - f(x^\star) \leq \tfrac{16 d \norm{B}^2_\alpha}{\mu(k+1)^{2(\alpha-1)/\alpha}}\,.
	\end{align*}
	\vspace{-6mm}
\end{corollary}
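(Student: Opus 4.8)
The plan is to reduce Corollary~\ref{cor:coord-g-clip-convergence} to the already-established strongly-convex bound of Theorem~\ref{thm:strongly-convex-convergence} by showing that the coordinate-wise moment control of Assumption~\ref{assump:coord-noise} implies the global moment control of Assumption~\ref{assump:alpha-moment-cvx} with an explicit constant. First I would pass from the coordinate bounds $\E[\abs{g_i(x)}^\alpha] \le B_i^\alpha$ to a bound on $\E[\norm{g(x)}^\alpha]$. The key point is to route this comparison through the $\ell_\alpha$-(quasi)norm rather than through coordinate-wise second moments $\E[g_i(x)^2]$, since the latter may be infinite under Assumption~\ref{assump:coord-noise} (which for $\alpha<2$ only controls $\alpha$-moments). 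Concretely, for every realization and since $\alpha\le 2$, $\norm{g(x)} \le \sqrt{d}\,\norm{g(x)}_\infty \le \sqrt{d}\,\rbr[\big]{\sum_{i=1}^d\abs{g_i(x)}^\alpha}^{1/\alpha}$, using $\max_i\abs{g_i(x)}^\alpha \le \sum_i \abs{g_i(x)}^\alpha$. Raising to the $\alpha$-th power, taking expectations, and applying linearity of expectation together with Assumption~\ref{assump:coord-noise} yields $\E[\norm{g(x)}^\alpha] \le d^{\alpha/2}\sum_{i=1}^d\E[\abs{g_i(x)}^\alpha] \le d^{\alpha/2}\sum_{i=1}^d B_i^\alpha = \rbr[\big]{\sqrt{d}\,\norm{B}_\alpha}^\alpha$. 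Hence Assumption~\ref{assump:alpha-moment-cvx} holds with $G := \sqrt{d}\,\norm{B}_\alpha$.

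Second, I would simply invoke Theorem~\ref{thm:strongly-convex-convergence} with this value of $G$: run projected GClip with clipping parameter $\tau_k = G k^{\alpha-1} = \sqrt{d}\,\norm{B}_\alpha\, k^{\alpha-1}$ and step size $\eta_k = 4/(\mu(k+1))$, and return the $k$-weighted average $\bar x_k = \sum_{j=1}^k j x_{j-1}/\sum_{j=1}^k j$ --- this is the ``appropriate step-sizes and averaging'' referenced in the statement. The conclusion of Theorem~\ref{thm:strongly-convex-convergence} then reads $\E[f(\bar x_k)] - f(x^\star) \le \tfrac{16 G^2}{\mu(k+1)^{2(\alpha-1)/\alpha}} = \tfrac{16 d\norm{B}^2_\alpha}{\mu(k+1)^{2(\alpha-1)/\alpha}}$, which is exactly the claim.

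Since the proof is essentially this reduction, there is no serious analytic obstacle; the main (mild) thing to be careful about is the norm comparison in the first step, and in particular the deliberate choice not to attempt to control $\E[g_i(x)^2]$. I would also note in passing that the chain $\norm{g} \le \sqrt d\norm{g}_\infty \le \sqrt d \norm{g}_\alpha$ is loose --- the sharper inequality $\norm{g} \le \norm{g}_\alpha$ (valid for $\alpha\le 2$) would give $G = \norm{B}_\alpha$ and remove the factor $d$. Retaining the conservative factor $d$ is, however, the point of the corollary: since $\norm{B}_\alpha$ can already scale like $d^{1/\alpha}$ when the $B_i$ are comparable, Corollary~\ref{cor:coord-g-clip-convergence} exposes a potentially severe dimension dependence for GClip, which is precisely what motivates the adaptive coordinate-wise clipping scheme (ACClip) developed in the rest of Section~\ref{sec:transformerxl}.
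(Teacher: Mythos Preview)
Your proposal is correct and matches the paper's approach essentially line for line: the paper proves (Lemma~\ref{lemma:g-clip}) the same upper bound $\E[\norm{g(x)}^\alpha] \le d^{\alpha/2}\norm{B}_\alpha^\alpha$ via the identical chain $\norm{g}_2 \le \sqrt{d}\,\norm{g}_\infty \le \sqrt{d}\,\norm{g}_\alpha$, and then substitutes $G = \sqrt{d}\,\norm{B}_\alpha$ into Theorem~\ref{thm:strongly-convex-convergence}. Your side observation that $\norm{g}_2 \le \norm{g}_\alpha$ already holds for $\alpha\le 2$ (so the factor $d$ is avoidable) is also correct; the paper does not use this sharper inequality, and instead pairs the loose upper bound with a matching-order lower bound in Lemma~\ref{lemma:g-clip} to argue the comparison with CClip.
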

	\vspace{-2mm}
Thus, the convergence of GClip can have a strong dependence on $d$, which for large-scale problems might be problematic. We show next that using coordinate-wise clipping removes this dependency:
	\vspace{-2mm}
\begin{align}
x_{k+1} &=x_k - \eta_k \min\cbr[\big]{\tfrac{\tau_k}{|g_k|}, 1}  g_k\,, \text{ } \tau_k \in \mathbb{R}_{\geq 0}^d\,. \label{eqn:c-clip}\tag{CClip}
\end{align}

\begin{theorem}[{\bf CClip under coordinate-wise noise}]\label{thm:c-clip-convergence}
	Suppose we run CClip under the Assumption of~\ref{assump:coord-noise} with $\tau_k = B k^{\alpha -1}$ to obtain the sequence $\{x_k\}$. Then, if $f$ is $\mu$-strongly convex, with appropriate step-sizes and averaging, the output $\bar x_k$ satisfies
	\vspace{-0.2cm}
	\[
	\E[f(\bar x_k)] - f(x^\star) \leq \tfrac{16 \norm{B}^2_2}{\mu(k+1)^{2(\alpha-1)/\alpha}}\,.
	\]
	\vspace{-0.4cm}
\end{theorem}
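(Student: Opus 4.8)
\textbf{Proof proposal for Theorem~\ref{thm:c-clip-convergence}.}

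The plan is to run essentially the same analysis that underlies the strongly-convex GClip bound (Theorem~\ref{thm:strongly-convex-convergence}), but applied coordinate-by-coordinate, and then sum the per-coordinate contributions. First I would set up the standard one-step recursion for projected SGD-type updates on a $\mu$-strongly convex objective: writing $\hat g_k$ for the clipped stochastic gradient $\min\{\tau_k/|g_k|,1\}\,g_k$ (understood coordinate-wise), expand $\|x_{k+1}-x^\star\|^2 \le \|x_k - \eta_k \hat g_k - x^\star\|^2$, so that
\begin{align*}
\E\|x_{k+1}-x^\star\|^2 \le \|x_k-x^\star\|^2 - 2\eta_k \inp{\E[\hat g_k]}{x_k-x^\star} + \eta_k^2 \E\|\hat g_k\|^2.
\end{align*}
The key point is that $\E\|\hat g_k\|^2 = \sum_{i=1}^d \E[\hat g_{k,i}^2]$ and the bias $\E[\hat g_k] - \nabla f(x_k)$ also decomposes coordinate-wise, so the bias and second-moment control reduces to $d$ independent one-dimensional estimates, each governed by its own $B_i$ via Assumption~\ref{assump:coord-noise}.

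The heart of the argument is the one-dimensional clipping lemma: for a scalar random variable $g$ with $\E[g]=\nabla_i f(x)$ and $\E[|g|^\alpha]\le B_i^\alpha$, the clipped estimator $\hat g = \min\{\tau/|g|,1\}g$ satisfies a bias bound of order $B_i^\alpha \tau^{1-\alpha}$ and a second-moment bound of order $B_i^\alpha \tau^{2-\alpha}$ (plus the $(\nabla_i f)^2$ term). These are exactly the scalar estimates already used in the GClip proof; I would just invoke them per coordinate with $\tau_k = Bk^{\alpha-1}$, i.e. $\tau_{k,i} = B_i k^{\alpha-1}$. Summing the bias contribution over $i$ gives something controlled by $\sum_i B_i^\alpha \tau_{k,i}^{1-\alpha} = k^{-(\alpha-1)^2}\sum_i B_i^\alpha \cdot B_i^{1-\alpha}$... — more cleanly, the relevant aggregate that appears is $\sum_i B_i^2 = \norm{B}_2^2$ rather than $d\norm{B}_\alpha^2$, because each coordinate contributes its own $B_i^2$-scale term instead of being bounded by the worst-case $\max_i B_i$ inflated by $d$. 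This is precisely where coordinate-wise clipping beats Corollary~\ref{cor:coord-g-clip-convergence}: replacing $d\norm{B}_\alpha^2$ by $\norm{B}_2^2$, and by power-mean / norm monotonicity $\norm{B}_2^2 \le d\norm{B}_\alpha^2$ always, with a potentially large gap.

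After the per-step inequality is in hand with the aggregated bias term $\lesssim \norm{B}_2^2 k^{-(\alpha-1)/\alpha}$-type decay and second-moment term of matching order, I would plug in the step-size $\eta_k = \tfrac{4}{\mu(k+1)}$, use $\mu$-strong convexity to convert $\inp{\nabla f(x_k)}{x_k-x^\star}$ into $f(x_k)-f(x^\star) + \tfrac{\mu}{2}\|x_k-x^\star\|^2$, absorb the $\tfrac{\mu}{2}\|x_k-x^\star\|^2$ into the contraction, multiply through by the weight $k$, and telescope the resulting recursion over $k$. The $k$-weighted averaging $\bar x_k = \sum_j j x_{j-1}/\sum_j j$ together with Jensen then yields $\E[f(\bar x_k)] - f(x^\star) \le \tfrac{16\norm{B}_2^2}{\mu(k+1)^{2(\alpha-1)/\alpha}}$. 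The main obstacle — or at least the only step needing real care — is tracking the interplay between the bias of the clipped gradient and the strong-convexity contraction: the bias must be small enough (which forces the growing threshold $\tau_k = Bk^{\alpha-1}$) yet the second moment must not blow up, and one has to verify that with this choice of $\tau_k$ and $\eta_k$ the telescoping constants work out to the clean constant $16$; this is a bookkeeping exercise essentially identical to the GClip strongly-convex proof, so I would structure the write-up to reuse that lemma verbatim in each coordinate and only redo the summation over $i$ and the final telescoping.
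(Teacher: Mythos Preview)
Your proposal is correct and follows essentially the same route as the paper: apply the scalar bias--variance clipping lemma coordinate-by-coordinate (this is the paper's Lemma~\ref{lem:c-clip-bias-variance}), plug the resulting per-coordinate bounds into the standard projected strongly-convex one-step recursion, choose $\eta_k = \tfrac{4}{\mu(k+1)}$, multiply by $k$, telescope, and finish with Jensen on the $k$-weighted average. One small remark: the reason your intermediate computation stalled at ``$k^{-(\alpha-1)^2}\sum_i B_i^\alpha \cdot B_i^{1-\alpha}\ldots$'' is that the threshold exponent in the theorem statement is a typo --- the paper's proof actually takes $\tau_{k,i} = B_i k^{1/\alpha}$ (just as the GClip proof uses $\tau_k = G k^{1/\alpha}$), which makes both the bias-squared and the $\eta_k$-weighted second-moment terms come out as $B_i^2$ times the right power of $k$, and summing over $i$ gives $\norm{B}_2^2$ cleanly.
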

	\vspace{-2mm}
Note that $\norm{B}_2 \le \norm{B}_\alpha$. CClip has a worst-case convergence independent of $d$ under the coordinate-wise noise model.  Similar comparison between GClip and CClip can be done for non-convex conditions too, but we skip for conciseness. Though we only compare upper-bounds here, when the noise across coordinates is independent the upper bounds may be tight (see Lemma~\ref{lemma:g-clip}). 
	\vspace{-2mm}
\subsection{Online moment estimation}

\begin{figure*}[t!]
	\begin{subfigure}{\textwidth}
		\centering
		\includegraphics[width=0.85\linewidth]{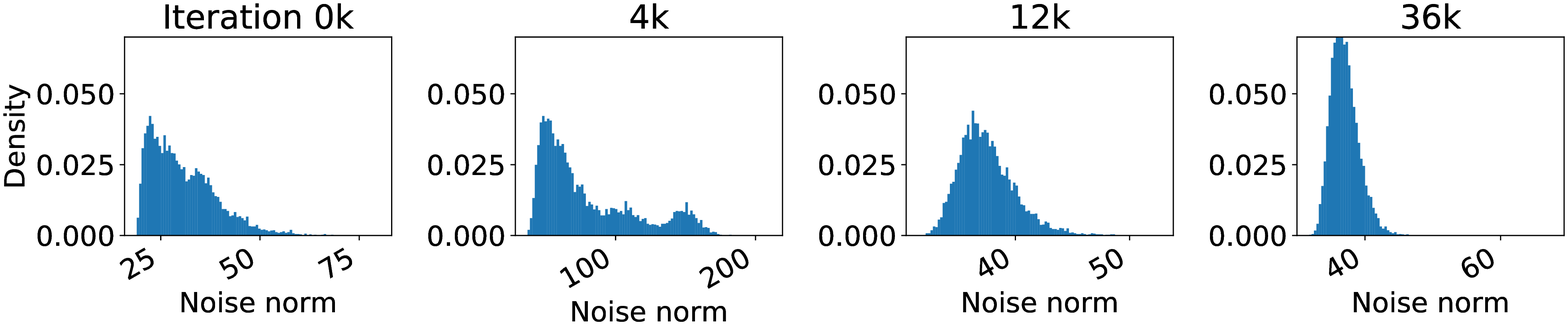}

		\caption{Development of noise distribution during BERT training.}
	\end{subfigure}

	\begin{subfigure}{\textwidth}
		\centering
		\includegraphics[width=0.85\linewidth]{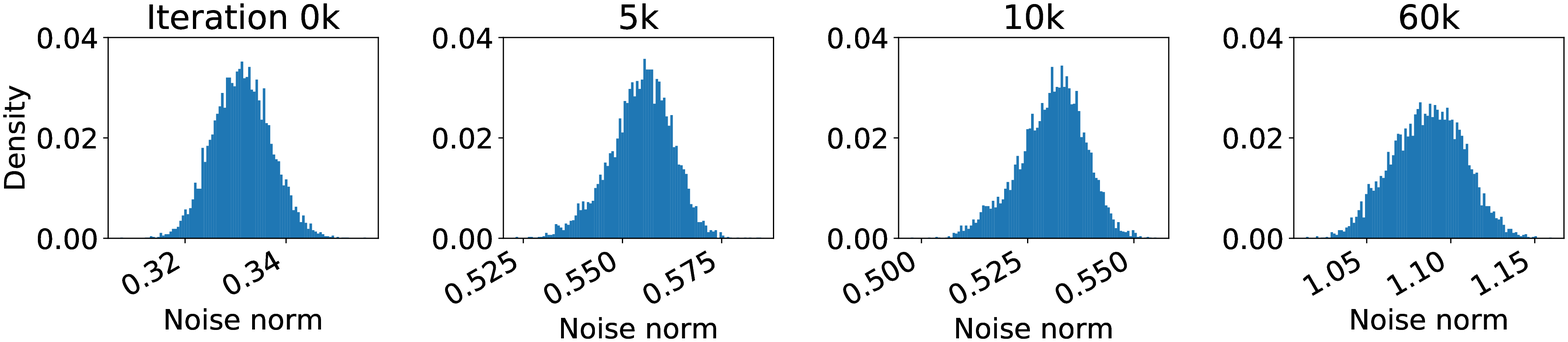}

		\caption{Development of noise distribution during ResNet50 training on ImageNet.}
	\end{subfigure}

	\caption{The distribution of gradient noise is non-stationary during BERT training, while it remains almost unchanged for ResNet training on ImageNet.}
	\label{fig:nonstationary}
\end{figure*}

We now present the second technique that is motivated by our observation in Figure~\ref{fig:nonstationary}. There, the distribution of gradient noise at the beginning of different epochs is shown during training for BERT with Wikipedia (top) as well as ResNet  with ImageNet (bottom). The result highlights that the noise distribution is not only heavy-tailed, but also non-stationary during BERT training and becomes increasingly more concentrated. In contrast, for the ResNet model the noise distribution remains mostly unchanged. 

Since the scale of the noise changes drastically during training for BERT model and our theoretical analysis suggest that we should clip proportional to the noise level, we propose to use an exponential moving average estimator to estimate the moment and clip the gradient accordingly (line 4,5 of Alg~\ref{alg:acclipp}). This, combined with the momentum term leads to our proposed ACClip algorithm in Algorithm~\ref{alg:acclipp}. On a high level, the algorithm applies clipping to the momentum term, where the clipping threshold is proportional to the estimated moment using an exponential moving average. From our experiment, we found the conservative choice of $\alpha = 1$ leads to the best performance.

\begin{algorithm}[t] 
	\caption{ACClip}\label{alg:acclipp}
	\begin{algorithmic}[1]
		\State $x, m_k \gets x_0, 0$
		\For {$k = 1, \cdot, T$}
		\State $m_k \gets \beta_1 m_{k-1} + (1-\beta_1) g_k $
		\State $\tau_k^\alpha \gets \beta_2 \tau_{k-1}^\alpha + (1 - \beta_2) |g_k|^\alpha$
		\State $\hat{g}_k \gets \min\cbr[\big]{\tfrac{\tau_k}{|m_k|+\epsilon}, 1}m_k$
		\State $x_k \gets x_{k-1} - \eta_k \hat{g}_k$
		
		\EndFor
		\Return $x_K$, where random variable $K$ is supported on $\{1, \cdots, T\}$.
	\end{algorithmic}
\end{algorithm}

\vspace{-2mm}
	
\section{Experiments}\label{sec:experiment}
\vspace{-2mm}

In this section, we first verify the effect of coordinate-wise clipping and moment estimation introduced in Section~\ref{sec:transformerxl}. We then perform extensive evaluations of ACClip on BERT pre-training and fine-tuning tasks and demonstrate its advantage over Adam in Section~\ref{sec:acclip-exp}. For completeness, an experiment on ImageNet is included in Appendix~\ref{sec:renet-exp}. Finally, we start with a few more experiments on the noise distribution in neural network training.

\subsection{From GClip to ACClip}\label{sec:transformerxl-exp}
	
In this section we instantiate the argument in Section~\ref{sec:transformerxl} with a set of experiments. As seen in Figure~\ref{fig:transformerxl}, global clipping improves the vanilla SGD algorithm but is still far from the ADAM baseline. We apply two techniques (coordinate-wise clipping and online moment estimation) onto the clipped SGD algorithm analyzed in Section~\ref{sec:thm}. We use a set of experiments on Transformer-XL training to demonstrate the effect of each technique.

\begin{figure}
	\begin{subfigure}{.33\textwidth}
		\centering
		\includegraphics[width=0.99\linewidth]{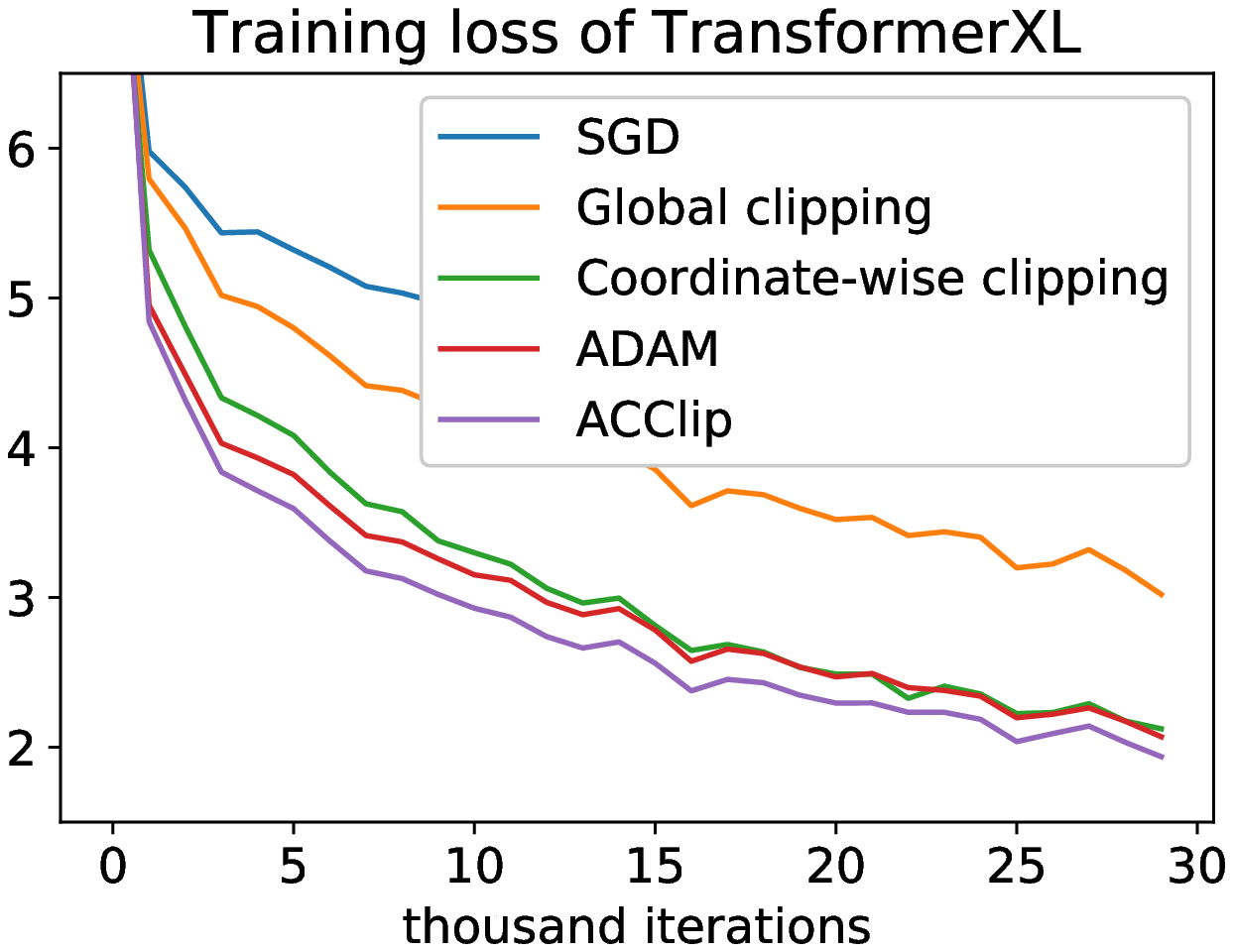}
		\vspace{-0.2cm}
		\captionsetup{width=.99\linewidth}
		\caption{}	\label{fig:transformerxl}
	\end{subfigure}
		\begin{subfigure}{.33\textwidth}
		\centering
		\includegraphics[width=0.99\linewidth]{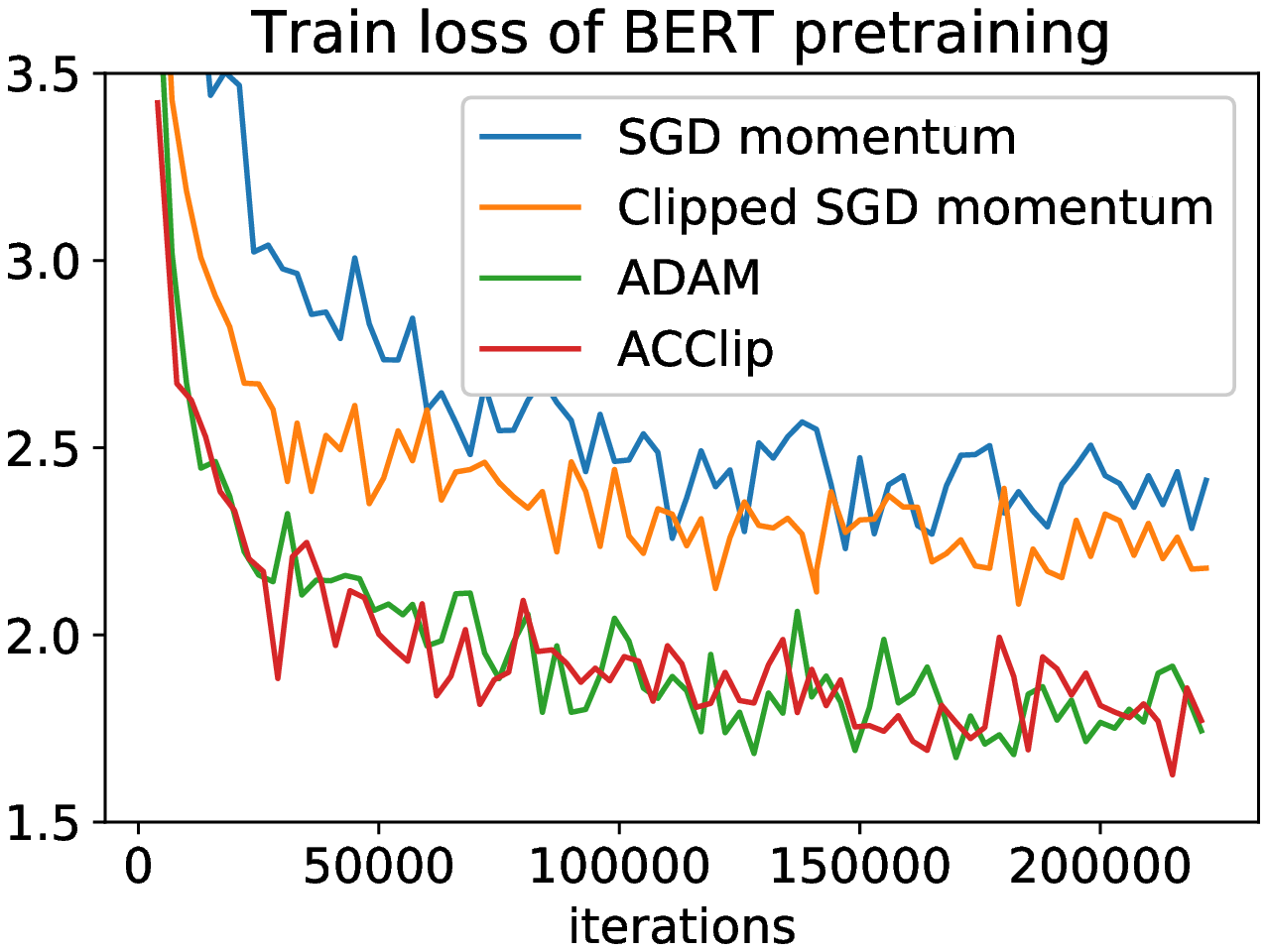}
		\vspace{-0.2cm}
		\captionsetup{width=.99\linewidth}
		\caption{}	\label{fig:transformerxl}
	\end{subfigure}
	\begin{subfigure}{.33\textwidth}
		\centering
		\includegraphics[width=\linewidth]{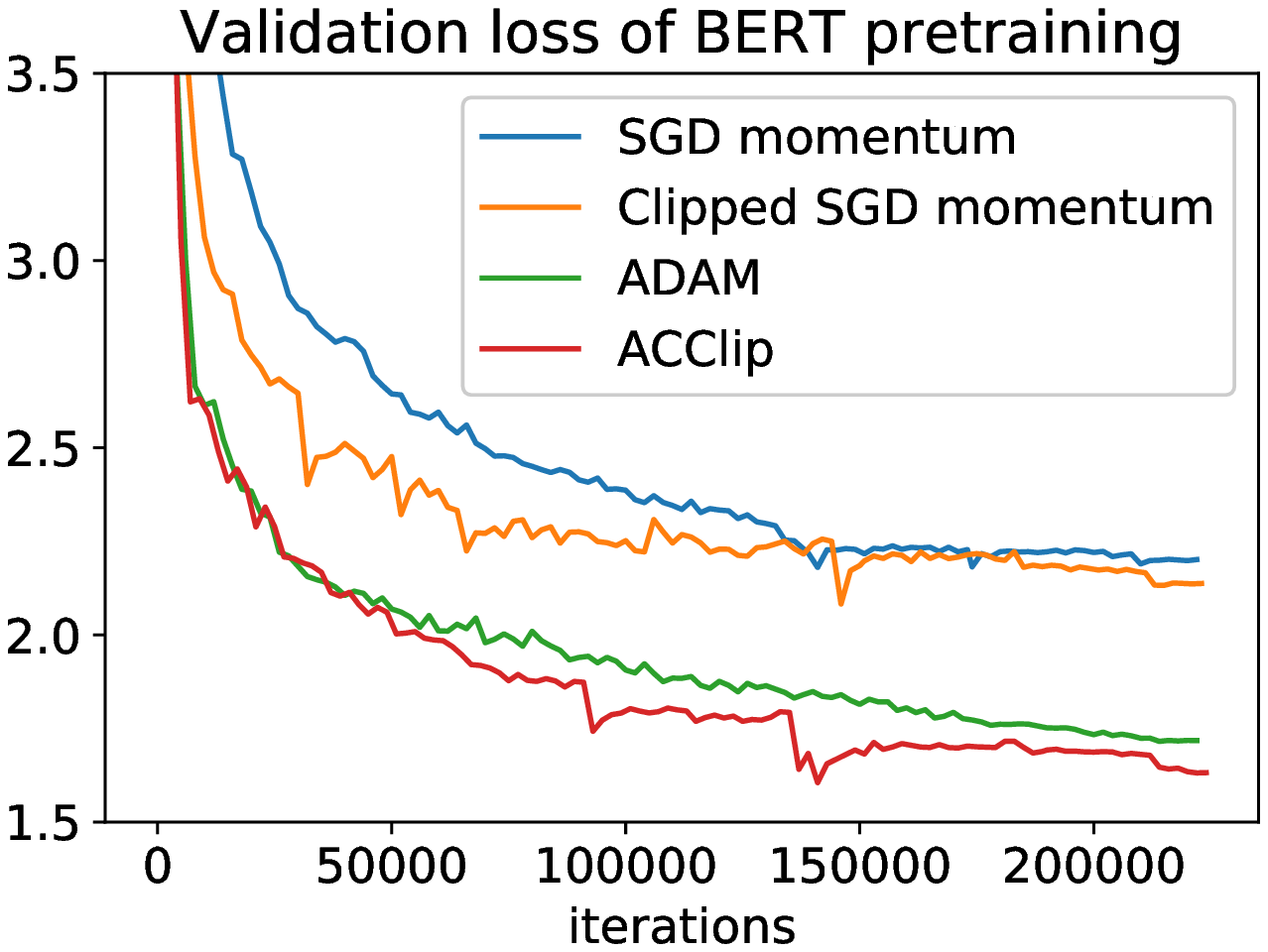}
		\vspace{-0.2cm}
		\captionsetup{width=.99\linewidth}
		\caption{}\label{fig:loss-curves-bert}
	\end{subfigure}
	\caption{(a) Performance of different algorithms for training a toy transformer-XL model described in Section~\ref{sec:transformerxl}. (b) Train and (c) validation loss for BERT$_{base}$ pretraining with the sequence length of 128. While there remains a large gap between non-adaptive methods and adaptive methods, clipped SGD momentum achieves faster convergence compared to standard SGD momentum. The proposed algorithm for adaptive coordinate-wise clipping (ACClip) achieves a lower loss than Adam.}
\end{figure}

\paragraph{Experiment setup}
We train  a 6-layer Transformer-XL model\cite{dai2019transformer} on PTB dataset as a proof of concept. Our main experiments will be on BERT pretraining and finetuning described in the next subsection~\ref{sec:acclip-exp}. We use adapt the author's github repo\footnote{https://github.com/kimiyoung/transformer-xl/tree/master/pytorch}, and replace the number of layers of the base model by 6. We then select the PTB data as input and set the maximum target length to be 128. The results are shown in Figure~\ref{fig:transformerxl}. 

\paragraph{Observations}  From Figure~\ref{fig:transformerxl}, we can tell that global clipping (orange curve) indeed speeds up vanilla SGD but is still much worse compared to the ADAM baseline provided by the code base. After replacing global clipping with coordinate-wise clipping, we see that the performance is already comparable to the ADAM baseline. Finally, after using the moment estimation to determine the clipping threshold, we are able to achieve faster convergence than ADAM.

\subsection{Performance of ACClip for BERT pre-training and fine-tuning}\label{sec:acclip-exp}


We now evaluate the empirical performance of our proposed ACClip algorithm on BERT pre-training as well fine-tuning using the SQUAD v1.1 dataset. As a baseline, we use Adam optimizer and the same training setup as in the BERT paper \citep{devlin2018bert}. For ACClip, we set $\tau = 1, \textit{learning rate} = \text{1e-4}, \beta_1 = 0.9, \beta_2 = 0.99, \epsilon=\text{1e-5}$ and $\textit{weight decay} = \text{1e-5}$. We compare both setups on BERT models of three different sizes, BERT\textsubscript{base} with 6 and 12 layers as well as BERT\textsubscript{large} with 24 layers.

Figure~\ref{fig:transformerxl} and~\ref{fig:loss-curves-bert} shows the loss for pretraining BERT\textsubscript{base} using SGD with momentum, GClip, Adam and ACClip. The learning rates and hyperparameters for each method have been extensively tuned to provide best performance on validation set. However, even after extensive tuning, there remains a large gap between (clipped) SGD momentum and adaptive methods. Furthermore, clipped SGD achieves faster convergence as well as lower final loss compared to standard SGD. Lastly, the proposed optimizer ACClip achieves a lower loss than the Adam. Table~\ref{fig:adam-vs-acclip-pretrain} further shows that ACClip achieves lower loss and higher masked-LM accuracy for all model sizes.

Next, we evaluate ACClip on the SQUAD v1.1 fine-tuning task. We again follow the procedure outlined in \citep{devlin2018bert} and present the results on the Dev set in Table~\ref{fig:adam-vs-acclip-squad}. Both for F1 as well as for exact match, the proposed algorithm outperforms Adam on all model sizes.
The experimental results on BERT pretraining and fine-tuning indicate the effectiveness of the proposed algorithm. 
\begin{table}[t]
	\fontsize{9}{10.5}
	\centering
	\caption{\label{fig:adam-vs-acclip-pretrain}
		\textbf{BERT pretraining: Adam vs ACClip.} Compared to Adam, the proposed ACClip algorithm achieves better evaluation loss and Masked LM accuracy for all model sizes.}
	\begin{tabular}{@{}lcccccc@{}} \toprule
		& \multicolumn{2}{c}{\textbf{  BERT Base 6 layers  }} &  \multicolumn{2}{c}{\textbf{  BERT Base 12 layers  }} & \multicolumn{2}{c}{\textbf{  BERT Large 24 layers}} \\ 
		&   $\text{Val. loss}$ &  $\text{Accuracy}$    &  $\text{Val. loss}$ & $\text{Accuracy}$    &  $\text{Val. loss}$ &  $\text{Accuracy}$ \\
		\midrule 
		$\text{Adam}$ &  $1.907$ & $63.45$    &  $1.718$ & $66.44$    &  $1.432$ & $70.56$ \\ 
		$\text{ACClip}$  &   \textbf{1.877} & \textbf{63.85}   & \textbf{1.615} & \textbf{67.16}    & \textbf{1.413} & \textbf{70.97}  \\ 
		\bottomrule
	\end{tabular}
		\vspace{-0.3cm}
\end{table}
\begin{table}[t]
\fontsize{9}{10.5}
    \centering
    \caption{\label{fig:adam-vs-acclip-squad}
	  \textbf{SQUAD v1.1 dev set: Adam vs ACClip}. The mean and standard deviation of F1 and exact match score for 5 runs. The first row contains results reported from the original BERT paper, which are obtained by picking the best ones out of 10 repeated experiments.}
	\begin{tabular}{@{}lcccccc@{}} \toprule
	& \multicolumn{2}{c}{\textbf{  BERT Base 6 layers  }} &  \multicolumn{2}{c}{\textbf{  BERT Base 12 layers  }} & \multicolumn{2}{c}{\textbf{  BERT Large 24 layers}} \\ 
	&  $\text{EM}$  &  $\text{F1}$     &  $\text{EM}$  &  $\text{F1}$      &  $\text{EM}$ &  $\text{F1}$   \\
	\midrule 
	\multicolumn{2}{@{}l}{{$\text{Adam}$}{\fontsize{7}{10.5} $\text{(Devlin et al., 2018) }$}}   &    &  $80.8$ & $88.5$    &   $84.1$ & $90.9$ \\
	$\text{Adam}$  &  $76.85\pm0.34$ & $84.79\pm0.33$    &  $81.42 \pm 0.16$ & $88.61\pm0.11$   &  $83.94\pm0.19$ & $90.87\pm0.12$ \\ 
	$\text{ACClip}$   &   \textbf{78.07 $\pm$ 0.24} & \textbf{85.87 $\pm$ 0.13}   & \textbf{81.62 $\pm$ 0.18} & \textbf{88.82  $\pm$ 0.10}    & \textbf{84.93 $\pm$ 0.29} & \textbf{91.40 $\pm$ 0.15}  \\ 
	\bottomrule
    \end{tabular}
\end{table}

\vspace{-0.2cm}
\subsection{Noise Patterns in BERT and ImageNet Training}\label{sec:noise-exp}
\vspace{-0.2cm}


\begin{figure*}[t!]
	\centering
	\begin{subfigure}{.24\textwidth}
		\centering
		\includegraphics[width=0.95\linewidth]{figs/bert_wiki_0k.eps}
		\captionsetup{width=.99\linewidth}
		\caption{Attention + Wikipedia}
	\end{subfigure}
	\begin{subfigure}{.24\textwidth}
		\centering
		\includegraphics[width=0.95\linewidth]{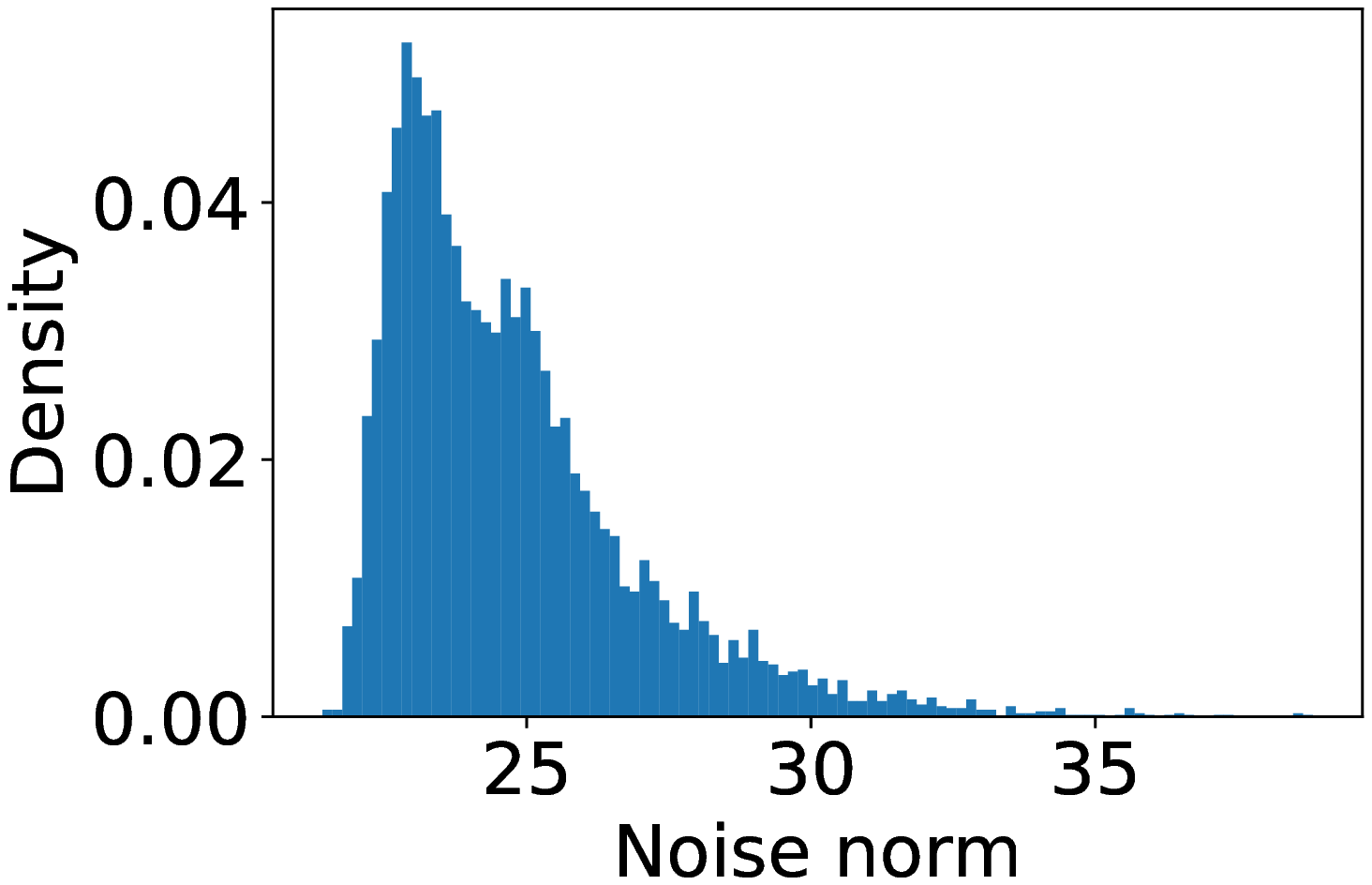}
		\captionsetup{width=.99\linewidth}
		\caption{Attention + Gaussian}
	\end{subfigure}
	\begin{subfigure}{.24\textwidth}
		\centering
		\captionsetup{width=.9\linewidth}
		\includegraphics[width=0.95\linewidth]{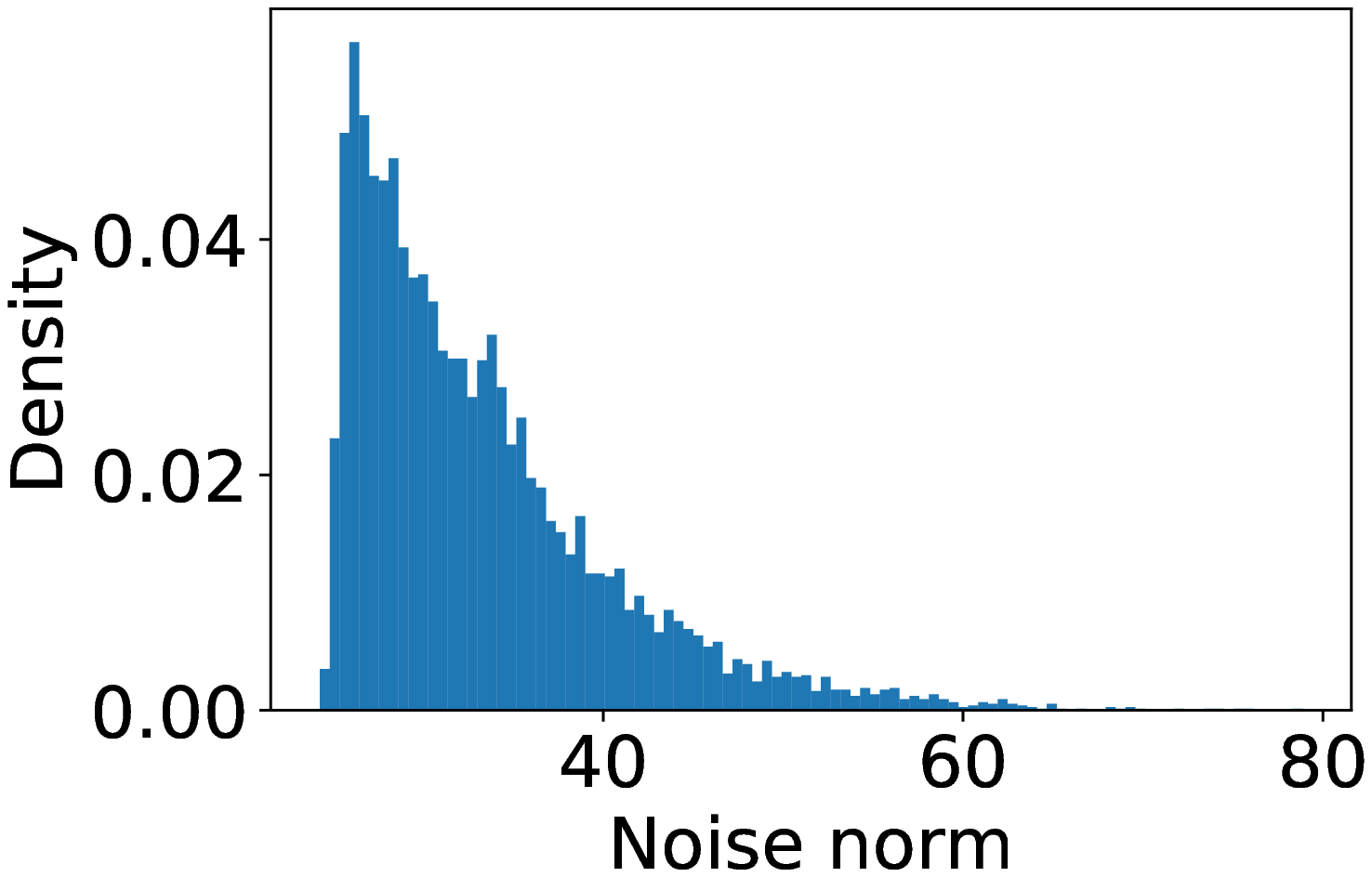}
		\caption{Resnet + Wikipedia}
	\end{subfigure}
	\begin{subfigure}{.24\textwidth}
		\centering
		\captionsetup{width=.9\linewidth}
		\includegraphics[width=0.95\linewidth]{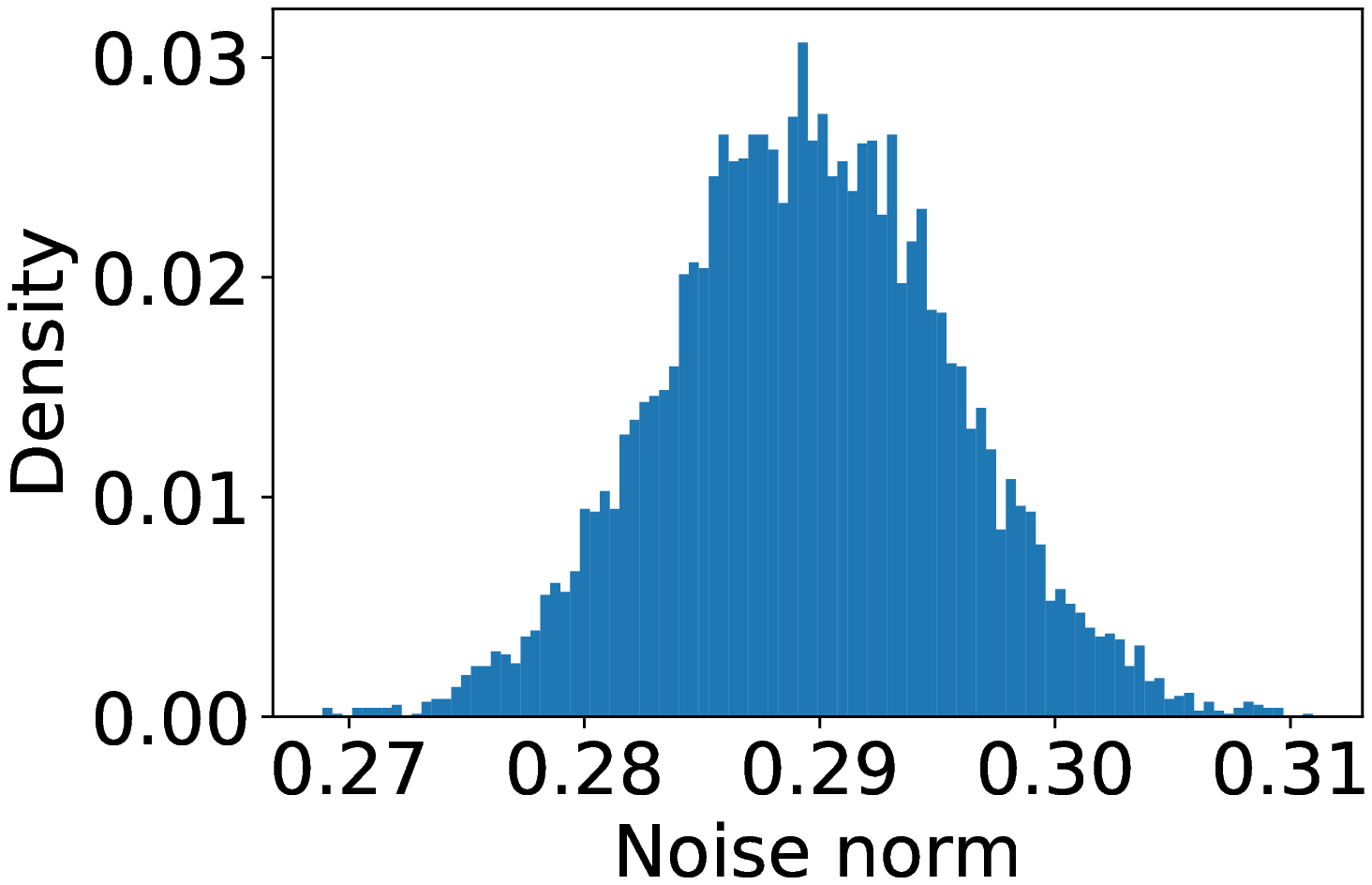}
		\caption{Resnet + Gaussian.}
	\end{subfigure}
	\caption{Distribution of gradient noise norm in Attention and ResNet models on two data sources: Wikipedia and synthetic Gaussian. The heavy-tailed noise pattern results from the interaction of both model architecture as well as data distribution.}
	\label{fig:data-vs-arch}
\end{figure*}

In our initial analysis in Figure~\ref{fig:heavy-noise}, we observe that training an attention model on Wikipedia leads to heavy-tailed noise whereas training a ResNet on ImageNet data leads to well-concentrated noise.
Here, we aim to disentangle the effect that model architecture and training data have on the shape of gradient noise.  
To this end, we measure the distribution of the gradient noise norm in an Attention and a ResNet model on both Wikipedia and synthetic Gaussian data. We used BERT\textsubscript{base} as the Attention model, and the ResNet is constructed by removing the self-attention modules within the transformer blocks. Gaussian synthetic data is generated by replacing the token embedding layer with normalized Gaussian input. 
The resulting noise histograms are shown in Figure~\ref{fig:data-vs-arch}. The figure shows that the Attention model leads to heavy-tailed noise independently of input data. For the ResNet model, we observe that Gaussian input leads to Gaussian noise, whereas Wikipedia data leads to be heavy-tailed noise. We thus conclude that the heavy-tailed noise pattern results from both the model architecture as well as the data distribution. 
\vspace{-0.2cm}
\section{Discussion}
\vspace{-0.2cm}

One immediate extension from this work is to view RMSProp as a clipping algorithm and prove its convergence under shifting noise. The update for RMSProp and ACClip with $\beta_1=0$ can be written with effective step-sizes $h_{\text{rms}}$ and $h_{\text{clip}}$ respectively as below:
\begin{align*}
x_{k+1}  &= x_{k} - \tfrac{\alpha}{\epsilon + \sqrt{\beta_2 v_k + (1-\beta_2) |g_k|^2}}g_k =:  x_{k} - h_{\text{Adam}} g_k\,, \text{ and}\\
x_{k+1} &= \hspace{3mm} x_{k} - \eta_k  \min\cbr[\big]{\tfrac{\tau_k}{|g_k|}, 1}g_k \hspace{4mm} =:  x_{k} - h_{\text{clip}} g_k.
\end{align*}
Given any set of parameters for RMSProp, if we set the parameters for ACClip as
\[
\eta_k =  \tfrac{2\alpha}{\epsilon +\sqrt{\beta_2 v_k}} \quad \text{and} \quad
\tau_k = \tfrac{\epsilon + \sqrt{\beta_2 v_k}}{\sqrt{1-\beta_2}},
\]
then $ \tfrac{1}{2}h_{\text{clip}} \le  h_{\text{Adam}} \le 2h_{\text{clip}}$. Thus, RMSProp can be seen as ACClip where $\tau_k$ is set using $\textstyle\sqrt{v_k}$, which estimates $\E[|g_k|^2]^{1/2}$, and a correspondingly decreasing step-size. An analysis of RMSprop (and Adam) by viewing them as adaptive clipping methods is a great direction for future work.

In summary, our work theoretically and empirically ties the advantage of adaptive methods over SGD to the heavy-tailed nature of gradient noise. A careful analysis of the noise and its impact yielded two insights: that clipping is an excellent strategy to deal with heavy-tailed noise, and that the ACClip yields state of the art performance for training attention models. Our results add to a growing body of work which demonstrate the importance of the structure of the noise in understanding neural network training. We believe additional such investigations into the source of the heavy tailed-ness, as well as a characterization of the noise can lead to further insights with significant impact on practice.

\section*{Broader impact}
We study convergence rates of gradient methods under a more relaxed noise condition. The result under this setting reaches conclusions that are closer to practice compared to results under the standard setting. Hence, our work provides one way to bridge the theory-practice gap and can facilitate more future works in this direction.

{
	\bibliographystyle{abbrvnat}
	\bibliography{bibliography}
}
	\clearpage
	
	\appendix

\section{Additional definitions and assumptions}\label{sec:app-assump}
Here we describe some of the formal assumptions which were previously skipped. 

\subsection{Assumptions in the nonconvex setting}
We define the standard notion of smoothness.
\begin{assumption}[{\bf$L$-smoothness}]
	\label{assump:smooth}
	$f$ is $L$-smooth, i.e. there exist positive constants  $L$ such that $\forall x, y$, $f(y) \leq f(x) + \inp{\nabla f(x)}{y - x} + \frac{L}{2}\norm{y - x}^2\,.$
\end{assumption}
Note that we only need the smoothness assumption for non-convex functions.

\subsection{Assumptions in the strongly convex setting}
For strongly-convex optimization, instead of bounding the noise, we assume that the stochastic oracle has bounded moment. 
\begin{assumption}[{\bf bounded $\alpha$ moment}]
	\label{assump:alpha-moment-cvx}
	There exists positive real numbers $\alpha \in (1, 2]$ and $G > 0$ such that for all $x$,
	$\E[\|g(x)\|^\alpha] \le G^\alpha$.
\end{assumption}
Note that the above assumption implies a uniform bound on gradient norm. Such bound is necessary for nonsmooth strongly convex problems, as one can no longer factor out the gradient norm using the smoothness assumption. See for example, \cite{rakhlin2011making}. 

\begin{assumption}[{\bf$\mu$-strong-convexity}]
\label{assump:strongly-convex}
 $f$ is $\mu$-strongly convex, if there exist positive constants  $\mu$ such that $\forall x, y$, 
$$ f(y) \ge f(x) + \inner{\nabla f(x), y-x} + \frac{\mu}{2}\|y-x\|^2$$
\end{assumption}

The strong convexity assumption and the bounded gradient assumption implies that the domain is bounded, which we state explicitly below,
\begin{assumption}[{\bf bounded domain}]
	\label{assump:domain}
	We look for a solution $x$ within a bounded convex set $\mathcal{X}$.
\end{assumption}

We didn't upper bound the domain diameter as it is not used explicitly in the proof.  To ensure all updates are within a domain, we use the projected version of ~\eqref{eqn:g-clip} defined as follows:

\begin{align}
x_{k+1} &=  \operatorname{proj}_\mathcal{X} \{x_k - \eta_k \min\cbr[\big]{\tfrac{\tau_k}{\|g_k\|}, 1}g_k\,,\ \tau_k \in \mathbb{R}_{\geq 0} ]\} \label{eqn:proj-g-clip}\tag{proj-GClip}
\end{align}

The projection operator $x = \operatorname{proj}_{\mathcal{X}}(y)$ finds the point $x \in \mathcal{X}$ that has the least distance to $y$.

\section{Effect of global clipping on variance and bias}\label{sec:var-bias}
We focus on \eqref{eqn:g-clip} under stochastic gradients which satisfy Assumption \ref{assump:alpha-moment}.
\begin{lemma}\label{lem:g-clip-bias-variance}
    For any $g(x)$ suppose that assumption \ref{assump:alpha-moment} holds with $\alpha \in (1,2]$. If $\E[\|g(x)\|^\alpha] \le G^\alpha$, then the estimator $\hat{g} := \min\cbr[\big]{\tfrac{\tau_k}{\|g_k\|}, 1}g_k$ from \eqref{eqn:g-clip} with  clipping parameter $\tau \geq 0$ satisfies:
    \[
        \E\sbr*{\norm{\hat{g}(x)}^2} \leq G^\alpha \tau^{2 - \alpha} \text{ and } \norm{\E[\hat{g}(x)] - \nabla f(x)}^2 \leq G^{2\alpha}\tau^{-2(\alpha -1)}\,. 
    \]
\end{lemma}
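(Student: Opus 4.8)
The plan is to prove the two bounds separately, starting from the definition $\hat g := \min\{\tau/\norm{g},1\}\, g$, which always satisfies $\norm{\hat g} \le \tau$ and $\norm{\hat g} \le \norm{g}$ pointwise.

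\textbf{Second moment bound.} For the first inequality I would interpolate between the two pointwise bounds: $\norm{\hat g}^2 = \norm{\hat g}^{2-\alpha}\norm{\hat g}^{\alpha} \le \tau^{2-\alpha}\norm{g}^{\alpha}$, using $\norm{\hat g}\le\tau$ on the first factor (legitimate since $2-\alpha\ge 0$) and $\norm{\hat g}\le\norm{g}$ on the second. Taking expectations and invoking the hypothesis $\E[\norm{g(x)}^\alpha]\le G^\alpha$ gives $\E[\norm{\hat g(x)}^2]\le G^\alpha\tau^{2-\alpha}$.

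\textbf{Bias bound.} For the second inequality, since $g$ is unbiased ($\E[g]=\nabla f(x)$), the bias is $\E[\hat g]-\nabla f(x) = \E[\hat g - g] = -\,\E[(g-\hat g)]$, and $g-\hat g = (1-\min\{\tau/\norm g,1\})g = \indicator{\norm g > \tau}\,(1-\tau/\norm g)g$, which is supported on the event $\{\norm g>\tau\}$ and has norm at most $\norm g\,\indicator{\norm g>\tau}$. By Jensen, $\norm{\E[\hat g]-\nabla f(x)}^2 \le \bigl(\E[\norm{g}\,\indicator{\norm g > \tau}]\bigr)^2$. Now I would bound $\E[\norm{g}\,\indicator{\norm g>\tau}]$ by a truncation/Markov argument: on $\{\norm g>\tau\}$ we have $\norm{g} = \norm{g}^{\alpha}\norm{g}^{1-\alpha} \le \norm{g}^{\alpha}\tau^{1-\alpha}$ (valid because $1-\alpha\le 0$ and $\norm g>\tau$), so $\E[\norm g\,\indicator{\norm g>\tau}] \le \tau^{1-\alpha}\E[\norm g^\alpha] \le \tau^{1-\alpha}G^\alpha$. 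Squaring yields $\norm{\E[\hat g]-\nabla f(x)}^2 \le G^{2\alpha}\tau^{-2(\alpha-1)}$, as claimed.

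I do not anticipate a serious obstacle here; the only thing to be careful about is the direction of the exponent inequalities — one uses $2-\alpha\ge 0$ and the other uses $1-\alpha\le 0$, so the interpolation step and the truncation step pull in opposite directions, and conflating them would break the argument. It is also worth noting that the statement as written assumes $\E[\norm{g(x)}^\alpha]\le G^\alpha$ (a bound on the raw moment, not just the centered one in Assumption~\ref{assump:alpha-moment}); both bounds above use precisely this raw-moment hypothesis, so no appeal to unbiasedness is needed for the second-moment bound, and unbiasedness is used only to rewrite the bias in the second part.
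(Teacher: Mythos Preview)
Your proposal is correct and follows essentially the same route as the paper's proof: both split $\norm{\hat g}^2 = \norm{\hat g}^{2-\alpha}\norm{\hat g}^{\alpha}$ and use $\norm{\hat g}\le\tau$ on the first factor together with $\norm{\hat g}\le\norm{g}$ on the second, and both handle the bias via Jensen, the observation that $g-\hat g$ is supported on $\{\norm{g}>\tau\}$, and the truncation inequality $\norm{g}\,\indicator{\norm g>\tau}\le \tau^{1-\alpha}\norm{g}^\alpha$. Your closing remarks on which exponent sign is used where, and on the raw-moment hypothesis versus Assumption~\ref{assump:alpha-moment}, are accurate and a bit more explicit than the paper.
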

\begin{proof}
First, we bound the variance.
\begin{align*}
\E[\|\hat{g}(x)\|^2] = \E[\|\hat{g}(x)\|^\alpha \|\hat{g}(x)\|^{2-\alpha}]
\end{align*}
By the fact that $\hat{g}(x) \le \tau$, we get
\begin{align*}
\E[\|\hat{g}(x)\|^2] = \E[\|\hat{g}(x)\|^\alpha \tau^{2-\alpha}] \le G^\alpha \tau^{2-\alpha}.
\end{align*}
Next, we bound the bias,
\begin{align*}
    &\|\E[\hat{g}(x)] - \nabla f(x)\| = \|\E[\hat{g}(x) - g(x)]\| \\
    &\le  \E[\|\hat{g}(x) - g(x)\|] = \E[\|\hat{g}(x) - g(x)\|\indicator{|g(x)| \ge \tau}] \\
    &\le \E\sbr*{\|g(x)\|\indicator{|g(x)| \ge \tau}} \\
    &\le \E\sbr*{\|g(x)\|^\alpha \indicator{|g(x)| \ge \tau}} /\tau^{\alpha-1} .
\end{align*}
The first inequality follows by Jenson's inequality. The second inequality follows by definition of $\hat{g}$. The third inequality follows by $\|g(x)\|^\alpha \indicator{|g(x)| \ge \tau} \ge \|g(x)\| \tau^{\alpha-1} \indicator{|g(x)| \ge \tau} $.
\end{proof}

As we increase the clipping parameter $\tau$, note that the variance (the first term in Lemma \ref{lem:g-clip-bias-variance}) increases while the bias (which is the second term) decreases. This way, we can carefully trade-off the variance of our estimator against its bias, thereby ensuring convergence of the algorithm.

\section{Non-convex Rates (Proof of Theorem~\ref{thm:nonconvex-convex-convergence})}\label{sec:thm-nonconvex}

The lemma in the previous section can be readily used in the nonsmooth strongly convex setting. However, we need a variant of Lemma~\ref{lem:g-clip-bias-variance} in the smooth case. 
\begin{lemma}\label{lem:g-clip-bias-variance-nonconvex}
	For any $g(x)$ suppose that assumption \ref{assump:alpha-moment} holds with $\alpha \in (1,2]$. If $\|\nabla f(x)\| \le \tau/2$, then the estimator $\hat{g} := \min\{1, \tau/\|g_k\|\}g_k$ from \eqref{eqn:g-clip} with global clipping parameter $\tau \geq 0$ satisfies:
	\[
	\E\sbr*{\norm{\hat{g}(x)}^2} \leq 2\|\nabla f(x)\|^2 + 4 \sigma^\alpha \tau^{2 - \alpha} \text{ and } \norm{\E[\hat{g}(x)] - \nabla f(x)}^2 \leq 4\sigma^{2\alpha}\tau^{-2(\alpha -1)}\,. 
	\]
\end{lemma}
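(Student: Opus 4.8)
\textbf{Proof proposal for Lemma~\ref{lem:g-clip-bias-variance-nonconvex}.}

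The plan is to mimic the proof of Lemma~\ref{lem:g-clip-bias-variance}, but since the moment bound in Assumption~\ref{assump:alpha-moment} now controls the noise $g(x) - \nabla f(x)$ rather than the full gradient $g(x)$, I need to split the estimator into its mean part and its noise part, and exploit the assumption $\|\nabla f(x)\| \le \tau/2$. Write $\xi := g(x) - \nabla f(x)$, so $\E\|\xi\|^\alpha \le \sigma^\alpha$. The clipped estimator is $\hat g(x) = \min\{1, \tau/\|g(x)\|\} g(x)$.

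For the second moment bound: since $\|\hat g(x)\| \le \tau$ always, I can write $\E\|\hat g(x)\|^2 \le \E[\|\hat g(x)\|^\alpha \tau^{2-\alpha}]$. Now I need $\E\|\hat g(x)\|^\alpha$. Since clipping only decreases the norm, $\|\hat g(x)\| \le \|g(x)\| \le \|\nabla f(x)\| + \|\xi\|$, so $\|\hat g(x)\|^\alpha \le (\|\nabla f(x)\| + \|\xi\|)^\alpha$. Using the elementary inequality $(a+b)^\alpha \le 2^{\alpha-1}(a^\alpha + b^\alpha) \le 2(a^\alpha + b^\alpha)$ for $\alpha \in (1,2]$, and then $\|\nabla f(x)\|^\alpha \le \|\nabla f(x)\|^2 \cdot (\tau/2)^{\alpha - 2}\cdot\ldots$ — actually more cleanly: $\tau^{2-\alpha}\|\nabla f(x)\|^\alpha = \|\nabla f(x)\|^2 (\tau/\|\nabla f(x)\|)^{2-\alpha} \le \|\nabla f(x)\|^2 \, 2^{2-\alpha} \le 2\|\nabla f(x)\|^2$ using $\|\nabla f(x)\| \le \tau/2$ and $2-\alpha \in [0,1)$. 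Combining, $\E\|\hat g(x)\|^2 \le \tau^{2-\alpha}\cdot 2(\|\nabla f(x)\|^\alpha + \E\|\xi\|^\alpha) \le 2\cdot 2\|\nabla f(x)\|^2 + 2\sigma^\alpha \tau^{2-\alpha}$; tightening constants gives the claimed $2\|\nabla f(x)\|^2 + 4\sigma^\alpha\tau^{2-\alpha}$ (one has to be a little careful about how the factor of $2$ from $(a+b)^\alpha$ distributes, but it lands within the stated constants).

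For the bias bound: as in Lemma~\ref{lem:g-clip-bias-variance}, $\|\E[\hat g(x)] - \nabla f(x)\| = \|\E[\hat g(x) - g(x)]\| \le \E[\|\hat g(x) - g(x)\|\,\indicator{\|g(x)\| \ge \tau}] \le \E[\|g(x)\|\,\indicator{\|g(x)\| \ge \tau}]$, since clipping is a no-op when $\|g(x)\| < \tau$ and shrinks the norm otherwise. On the event $\{\|g(x)\| \ge \tau\}$, the triangle inequality with $\|\nabla f(x)\| \le \tau/2$ gives $\|\xi\| \ge \|g(x)\| - \tau/2 \ge \|g(x)\|/2$, hence $\|g(x)\| \le 2\|\xi\|$ and also $\|\xi\| \ge \tau/2$. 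So $\E[\|g(x)\|\,\indicator{\|g(x)\|\ge\tau}] \le 2\,\E[\|\xi\|\,\indicator{\|\xi\| \ge \tau/2}] \le 2\,\E[\|\xi\|^\alpha (\tau/2)^{-(\alpha-1)}] \le 2^\alpha \sigma^\alpha \tau^{-(\alpha-1)} \le 4\sigma^\alpha\tau^{-(\alpha-1)}$. Squaring yields $\|\E[\hat g(x)] - \nabla f(x)\|^2 \le 16\sigma^{2\alpha}\tau^{-2(\alpha-1)}$; the stated bound is $4\sigma^{2\alpha}\tau^{-2(\alpha-1)}$, so I would need to track constants more carefully (e.g. the event $\|g(x)\|\ge\tau$ forces $\|\xi\|\ge\tau/2$, and one can get a factor $2^{\alpha-1}$ rather than $2^\alpha$ at one step, or absorb via $\E[\|\xi\|\indicator{\|\xi\|\ge\tau/2}]\le \E[\|\xi\|^\alpha]/(\tau/2)^{\alpha-1}$ combined differently) — this bookkeeping is the only delicate part.

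The main obstacle is purely in chasing the constants so that the factor-of-two losses from $(a+b)^\alpha \le 2^{\alpha-1}(a^\alpha+b^\alpha)$, from the triangle inequality on the event $\{\|g\|\ge\tau\}$, and from the Markov-type truncation step all fit inside the stated constants $2$, $4$, and $4$; there is no conceptual difficulty beyond correctly using $\|\nabla f(x)\| \le \tau/2$ to (i) convert $\tau^{2-\alpha}\|\nabla f(x)\|^\alpha$ into $\lesssim \|\nabla f(x)\|^2$ and (ii) show the clipping event is contained in a large-noise event.
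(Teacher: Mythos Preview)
There is a genuine gap in the variance argument. Your key step
\[
\tau^{2-\alpha}\|\nabla f(x)\|^\alpha = \|\nabla f(x)\|^2 \bigl(\tau/\|\nabla f(x)\|\bigr)^{2-\alpha} \le 2\|\nabla f(x)\|^2
\]
is false: since $\|\nabla f(x)\| \le \tau/2$ we have $\tau/\|\nabla f(x)\| \ge 2$, and $(\cdot)^{2-\alpha}$ is nondecreasing for $\alpha \le 2$, so the inequality goes the \emph{other} way. For small $\|\nabla f(x)\|$ the left side is of order $\tau^{2-\alpha}\|\nabla f(x)\|^\alpha$, which can be arbitrarily larger than $\|\nabla f(x)\|^2$. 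Consequently your route through $\E\|\hat g\|^2 \le \tau^{2-\alpha}\E\|\hat g\|^\alpha$ cannot produce a $\|\nabla f(x)\|^2$ term at all; it only yields a bound of the form $C\tau^{2-\alpha}(\|\nabla f(x)\|^\alpha + \sigma^\alpha)$, and the first piece is not controlled by either term in the statement. So the issue is not bookkeeping of constants --- the step you flagged as ``(i)'' simply cannot be carried out.

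The paper's fix is to decompose \emph{before} passing to the $\alpha$-moment: write $\|\hat g(x)\|^2 \le 2\|\nabla f(x)\|^2 + 2\|\hat g(x) - \nabla f(x)\|^2$, which isolates the $\|\nabla f(x)\|^2$ term with the correct constant $2$ immediately. For the remaining piece one uses two facts that both rely on $\|\nabla f(x)\| \le \tau/2$: (i) deterministically $\|\hat g(x) - \nabla f(x)\| \le \|\hat g(x)\| + \|\nabla f(x)\| < 2\tau$, and (ii) clipping is the projection onto the ball of radius $\tau$, and $\nabla f(x)$ lies inside that ball, so by non-expansiveness of projections $\|\hat g(x) - \nabla f(x)\| \le \|g(x) - \nabla f(x)\|$. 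Combining,
\[
\|\hat g(x) - \nabla f(x)\|^2 = \|\hat g(x) - \nabla f(x)\|^{\alpha}\,\|\hat g(x) - \nabla f(x)\|^{2-\alpha} \le \|g(x) - \nabla f(x)\|^\alpha (2\tau)^{2-\alpha},
\]
and taking expectations gives $2\cdot 2^{2-\alpha}\tau^{2-\alpha}\sigma^\alpha \le 4\sigma^\alpha\tau^{2-\alpha}$. The projection inequality (ii) is the idea your argument is missing.

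For the bias your approach is essentially the paper's, but you lose an extra factor of $2$ by crudely bounding $\|\hat g(x) - g(x)\| \le \|g(x)\|$ on the clipping event. The paper instead uses the exact value $\|\hat g(x) - g(x)\| = (\|g(x)\| - \tau)\indicator{\|g(x)\| > \tau}$ and then $\|g(x)\| - \tau \le \|g(x) - \nabla f(x)\| + \tau/2 - \tau \le \|g(x) - \nabla f(x)\|$, which leads directly to $\E[\|\xi\|\indicator{\|\xi\| > \tau/2}] \le (\tau/2)^{1-\alpha}\sigma^\alpha \le 2\sigma^\alpha\tau^{1-\alpha}$ and hence the stated constant $4$ after squaring.
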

\begin{proof}
	First, we bound the variance.
	\begin{align*}
	\E[\|\hat{g}(x)\|^2] &\le  \E[2\|\nabla f(x)\|^2 + 2\|\nabla f(x) - \hat{g}(x)\|^2] \\
	& = \E[2\|\nabla f(x)\|^2 + 2\|\nabla f(x) - \hat{g}(x)\|^\alpha \|\nabla f(x) - \hat{g}(x)\|^{2-\alpha}]  \\
	&\le  \E[2\|\nabla f(x)\|^2 + 2\|\nabla f(x) - \hat{g}(x)\|^\alpha (2\tau)^{2-\alpha}] \\
	&\le 2\|\nabla f(x)\|^2 + 4\tau^{2-\alpha}\E[\|\nabla f(x) - g(x)\|^\alpha]  \\
	& \le 2\|\nabla f(x)\|^2 + 4\tau^{2-\alpha}\sigma^\alpha
	\end{align*}
	The expectation is taken with respect to the randomness in noise. The second last inequality follows by the  fact that $ \|\nabla f(x) - \hat{g}(x)\| < 2\tau$.

	Next, we bound the bias,
	\begin{align*}
	&\|\E[\hat{g}(x)] - \nabla f(x)\| = \|\E[\hat{g}(x) - g(x)]\| \\
	&=  \E[|\|g(x)\| - \tau| \indicator{\|g(x)\| > \tau}]  \\
	&\le \E[\|g(x) - \nabla f(x)\| \indicator{\|g(x)\| > \tau}] \\
	& \le \E[\|g(x) - \nabla f(x)\| \indicator{\|g(x) - \nabla f(x)\| > \tau/2}] \\
	& \le \E[\|g(x) - \nabla f(x)\|^\alpha](\tau/2)^{1 - \alpha } \le 2 \sigma^\alpha \tau^{1 - \alpha }
	\end{align*}
	The last line follows by 
	$$\|g(x) - \nabla f(x)\| \indicator{\|g(x) - \nabla f(x)\| > \tau/2} \le \frac{\|g(x) - \nabla f(x)\|^\alpha }{ (\tau/2)^{\alpha-1}} \indicator{\|g(x) - \nabla f(x)\| > \tau/2}.$$
\end{proof}

Next, we need a subprocedure at the end proof of Lemma 2 from \cite{cutkosky2020momentum}. 
\begin{lemma}[Lemma 2 in \cite{cutkosky2020momentum}]\label{lemma:nonconvex}
	For any vector $v \in \reals^d$, $\inner{v/\|v\|, \nabla f(x)} \ge \frac{\|\nabla f(x)\|}{3} - \frac{8\|v - \nabla f(x)\|}{3}$.
\end{lemma}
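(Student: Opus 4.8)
The plan is to reduce the claim to the Cauchy--Schwarz inequality via a two-case split on the size of $\norm{\nabla f(x)}$ relative to the perturbation $\epsilon := \norm{v - \nabla f(x)}$; throughout, write $g := \nabla f(x)$ for brevity, and assume $v \neq 0$ (if $v = 0$ then $\epsilon = \norm{g}$ and we adopt the convention $v/\norm{v} = 0$, which is covered by the first case below). The intuition is that when $\norm{g}$ is small the inequality is automatic from Cauchy--Schwarz, and when $\norm{g}$ is large the vector $v$ must point roughly in the direction of $g$, so $\inner{v/\norm{v}, g}$ is close to $\norm{g}$.

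First I would dispose of the case $\norm{g} \le 2\epsilon$. Here the right-hand side $\tfrac{\norm{g}}{3} - \tfrac{8\epsilon}{3}$ is at most $\tfrac{2\epsilon}{3} - \tfrac{8\epsilon}{3} = -2\epsilon \le -\norm{g}$, while Cauchy--Schwarz gives the crude bound $\inner{v/\norm{v}, g} \ge -\norm{g}$. Chaining these two gives the claim immediately.

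Next I would treat the main case $\norm{g} > 2\epsilon$. The first observation is that $\norm{v} \ge \norm{g} - \epsilon > \epsilon \ge 0$, so dividing by $\norm{v}$ is legitimate, and moreover $\norm{v} \le \norm{g} + \epsilon$ by the triangle inequality. Expanding the inner product, $\inner{v, g} = \norm{g}^2 + \inner{v - g, g} \ge \norm{g}^2 - \epsilon\norm{g} = \norm{g}(\norm{g} - \epsilon) > 0$, and then dividing by $\norm{v} \le \norm{g} + \epsilon$ yields $\inner{v/\norm{v}, g} \ge \norm{g}(\norm{g} - \epsilon)/(\norm{g} + \epsilon) \ge \norm{g}\bigl(1 - 2\epsilon/\norm{g}\bigr) = \norm{g} - 2\epsilon$, where I used $(\norm{g} - \epsilon)/(\norm{g} + \epsilon) = 1 - 2\epsilon/(\norm{g}+\epsilon) \ge 1 - 2\epsilon/\norm{g}$. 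It remains only to check $\norm{g} - 2\epsilon \ge \tfrac{\norm{g}}{3} - \tfrac{8\epsilon}{3}$, which rearranges to $\tfrac{2}{3}\norm{g} + \tfrac{2}{3}\epsilon \ge 0$, always true.

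There is no genuine obstacle: the statement is elementary and follows purely from Cauchy--Schwarz once the case split is in place. The only point requiring a little care is choosing the threshold $\norm{g} = 2\epsilon$ so that in the second case the denominator $\norm{v}$ is guaranteed to be positive and comparable to $\norm{g}$; the loose constants $\tfrac13$ and $\tfrac83$ in the statement are precisely what leave enough slack for both cases to close simultaneously.
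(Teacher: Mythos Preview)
Your argument is correct. The two-case split at the threshold $\norm{g}=2\epsilon$ works cleanly: in the small-gradient regime Cauchy--Schwarz alone closes the bound, and in the large-gradient regime the expansion $\inner{v,g}=\norm{g}^2+\inner{v-g,g}$ together with $\norm{v}\le\norm{g}+\epsilon$ gives $\inner{v/\norm{v},g}\ge\norm{g}-2\epsilon$, which dominates the claimed right-hand side.

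As for comparison with the paper: there is nothing to compare against. The paper does not prove this lemma at all---it simply quotes it as Lemma~2 of \cite{cutkosky2020momentum} and invokes it as a black box inside the proof of Theorem~\ref{thm:nonconvex-convex-convergence} (specifically in Case~2, to lower-bound $\E[\inner{\nabla f}{g_k/\norm{g_k}}\,\mathbbm{1}\{\norm{g_k}\ge\tau\}]$). Your self-contained derivation is therefore a genuine addition rather than a re-derivation of something already in the text.
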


Finally, we are ready to show the proof.

\begin{proof}
At each iteration, we consider two cases, either $\|\nabla f(x_k)\| < \tau /2$ or $\|\nabla f(x_k)\| \ge \tau /2$.

\paragraph{Case 1: $\|\nabla f(x_k)\| < \tau /2$}

For simplicity, we denote $\hat{g}_k = \min\{1, \tau/\|g_k\|\}g_k$ and the bias $ b_k = \E[\hat{g}_k] - \nabla f(x_k) $. By Assumption~\ref{assump:smooth}, we have 
\begin{align*}
f(x_{k}) &\leq f(x_{k-1}) + \inp{\nabla f(x_{k-1})}{-\eta_k \hat{g}_{k-1}} + \frac{\eta_{k-1}^2 L}{2}\norm{\hat{g}_{k-1}}^2 \\
&\leq f(x_{k-1}) -\eta_{k-1} \|\nabla f(x_{k-1})\|^2 - \eta_{k-1} \inp{\nabla f(x_{k-1})}{b_{k-1}} + \frac{\eta_{k-1}^2 L}{2}\norm{\hat{g}_{k-1}}^2 \\
&\leq f(x_{k-1}) -\eta_{k-1} \|\nabla f(x_{k-1})\|^2 - \eta_{k-1} \inp{\nabla f(x_{k-1})}{b_{k-1}} + \frac{\eta_{k-1}^2 L}{2}\norm{\hat{g}_{k-1}}^2\\
&\leq f(x_{k-1}) -\eta_{k-1} \|\nabla f(x_{k-1})\|^2 + \frac{\eta_{k-1}}{2} \norm*{\nabla f(x_{k-1})}^2 + \frac{\eta_{k-1}}{2}\norm{b_{k-1}}^2 + \frac{\eta_{k-1}^2 L}{2}\norm{\hat{g}_{k-1}}^2\,.
\end{align*}
Here the last step used the AM-GM inequality. Then, taking expectation in both sides and using Lemma \ref{lem:g-clip-bias-variance-nonconvex} gives
\begin{align*}
    \E\sbr*{f(x_{k}) | x_{k-1}} &\leq f(x_{k-1}) - (\frac{\eta_{k-1}}{2} - \eta L) \norm*{\nabla f(x_{k-1})}^2 + 2\eta_{k-1}^2 L \sigma^\alpha \tau^{2-\alpha} + \frac{2\eta_k \sigma^{2\alpha}}{\tau^{2\alpha -2}}\,\\
    &\leq f(x_{k-1}) - \frac{\eta_{k-1}}{4} \norm*{\nabla f(x_{k-1})}^2 + 2\eta_{k-1}^2 L \sigma^\alpha \tau^{2-\alpha} + \frac{2\eta_{k-1} \sigma^{2\alpha}}{\tau^{2\alpha -2}}\,.
\end{align*}
In the last step we used $\{\eta_k = \eta \le  \frac{1}{4L}\}$.

\paragraph{Case 2: $\|\nabla f(x_k)\| > \tau /2$} Recall $\hat{g}_k = \min\{1, \tau/\|g_k\|\}g_k$ and parameter choices $\eta_k = \eta = \min\{\frac{1}{4L}, \frac{1}{L\tau^\alpha} , \frac{1}{24L\tau}\}$ and $\tau_k = \tau = \max\{2, 48^{1/(\alpha-1)}\sigma^{\alpha/(\alpha - 1)}, 8\sigma, \sigma K^{\frac{1}{3\alpha - 2}}\}$. We use $\nabla f$ as a shorthand for $\nabla f(x_{k})$:

\begin{align}\label{eq:case2}
&\E[\inp{\nabla f}{g_k} \mathbbm{1}_{\{\|g_k\| \le \tau \}}]  \ge \E[(\|\nabla f\|^2 - \|\nabla f\|\|g_k - \nabla f\| )\mathbbm{1}_{\{\|g_k\| \le \tau \}}] \nonumber \\
\ge & \E[\|\nabla f\|^2\mathbbm{1}_{\{\|g_k\| \le \tau \}} - \tfrac{1}{2}\|\nabla f\|^2\mathbbm{1}_{\{\|g_k\| \le \tau, \|g_k - \nabla f\| \le \tau/4 \}} - \|\nabla f\|\|g_k - \nabla f\|\mathbbm{1}_{\{\|g_k\| \le \tau, \|g_k - \nabla f\| \ge \tau/4 \}}] \nonumber \\
\ge & \tfrac{p}{2} \|\nabla f\|^2 - \|\nabla f\| \E[\|g_k - \nabla f\|\mathbbm{1}_{\{ \|g_k - \nabla f\| \ge \tau/4 \}}] 
\ge \tfrac{p}{2} \|\nabla f\|^2 - \|\nabla f\|
\tfrac{\sigma^\alpha}{(\tau/4)^{\alpha - 1}}
\end{align}
The first inequality uses 
$$\inp{\nabla f}{g_k} = \|\nabla f\|^2 + \inp{\nabla f}{g_k - \nabla f}.$$ 
The second line follows by 
$$\|\nabla f\| > \tau/2 \text{ and } \|g_k - \nabla f\|< \tau/4 \implies -\|\nabla f\|\|g_k - \nabla f\| \ge -\|\nabla f\|^2/2.$$
The last inequality follows by $\sigma^\alpha \ge \E[\|g_k - \nabla f\|^\alpha] \ge \E[\|g_k - \nabla f\|(\frac{\tau}{4})^{\alpha - 1} \mathbbm{1}_{ \|g_k - \nabla f\| \ge \tau/4 \}}]$ . 

With the above, we get
\begin{align*}
\E[\inp{\nabla f}{\hat{g}_{k}}] &= \E[\inp{\nabla f}{g_k} \mathbbm{1}\{\|g_k\| \le \tau \}] + \E[\inp{\nabla f}{g_k/\|g_k\|} \mathbbm{1}\{\|g_k\| \ge \tau \}]\\
&\ge \tfrac{p}{2} \|\nabla f\|^2 - \|\nabla f\| \tfrac{\sigma^\alpha}{(\tau/4)^{\alpha - 1}} + (1-p) \|\nabla f\|/3 - \tfrac{8}{3}\E[\|\nabla f - g_k\|] \\
& \ge \|\nabla f\|/3 - \|\nabla f\| \tfrac{\sigma^\alpha}{(\tau/4)^{\alpha - 1}}  - \tfrac{8\sigma}{3} \\
&\ge \|\nabla f\|/3 - \|\nabla f\|/12 - \|\nabla f\|/6 \\
& \ge \|\nabla f\|/12
\end{align*}
The second line follows by Lemma~\ref{lemma:nonconvex} and ~\eqref{eq:case2}. The third line follows by  that  $\tau \ge 2$, and $\|\nabla f\| \ge \tau/2$ imply $\tfrac{p}{2} \|\nabla f\|^2 \ge p\|\nabla f\|/3.$ Then, by $\tau \ge 48^{1/(\alpha-1)}\sigma^{\alpha/(\alpha - 1)}$, we have $\tfrac{\sigma^\alpha}{(\tau/4)^{\alpha - 1}} \le \tfrac{1}{12}$. By $\tau \ge 8\sigma$, $\tfrac{8}{3}\sigma \le \tau/3 \le \|\nabla f\|/6$.

\begin{align*}
\E[f(x_{k})] &\leq f(x_{k-1}) + \E[\inp{\nabla f(x_{k-1})}{-\eta_k \hat{g}_{k}} ] + \frac{\eta_k^2 L}{2}\tau^2 \\
&\leq f(x_{k-1}) -\eta_k \|\nabla f(x_{k-1})\|/12 +  \eta_k^2 L\tau \|\nabla f(x_{k-1})\| \\
& \le f(x_{k-1}) -\eta_k \|\nabla f(x_{k-1})\|/24
\end{align*}
The last inequality above follows by $\frac{1}{24L\tau}$.

Combine the two cases we have
\begin{align*}
\E\sbr*{f(x_{k}) | x_{k-1}} &\leq f(x_{k-1}) - \frac{\eta}{24} \min\{\norm*{\nabla f(x_{k-1})}^2, \norm*{\nabla f(x_{k-1})}\} + 2\eta^2 L \sigma^\alpha \tau^{2-\alpha} + \frac{2\eta \sigma^{2\alpha}}{\tau^{2\alpha -2}}\,.
\end{align*}

Rearrange and sum the terms above for some fixed step-size and threshold $\{\tau_k = \tau\}$ to get
\begin{align*}
    \frac{1}{K}\sum_{k=1}^K \E\sbr*{\min\{\norm{\nabla f(x_{k-1})}^2, \norm{\nabla f(x_{k-1})}\}} &\leq \frac{24}{\eta K}(f(x_0) - \E[f(x_{K})]) + 48\eta L \sigma^\alpha \tau^{2-\alpha}+ 48\frac{\sigma^{2\alpha}}{\tau^{2\alpha -2}}\\
    &\leq \underbrace{\frac{24}{\eta K}(f(x_0) - f^\star)}_{T_1} + \underbrace{48\eta L \sigma^\alpha \tau^{2-\alpha}+ \frac{48\sigma^{2\alpha}}{\tau^{2\alpha -2}}}_{T_2}\,.
\end{align*}
Since we use a stepsize $\eta \le \frac{1}{L\tau^\alpha}$, we can simplify $T_2$ as
\[
    \eta L \sigma^\alpha \tau^{2-\alpha}+ \frac{\sigma^{2\alpha}}{\tau^{2\alpha -2}} \le \frac{ \sigma^{2\alpha} + \sigma^{\alpha}}{\tau^{2\alpha - 2}} \,.
\]

Denote $F_0 = f(x_0) - f^\star$ to ease notation. Then, adding $T_2$ back to $T_1$ and using a threshold $\tau \ge  \sigma K^{\frac{1}{3\alpha - 2}}$ we get
\begin{align*}
    T_1 + T_2 &\le \frac{24 F_0 }{ K} (L \tau^\alpha + 4L + 24L\tau)+ 48 \frac{ \sigma^{2\alpha} + \sigma^{\alpha}}{\tau^{2\alpha - 2}} \\
    & \le 48 (\sigma^2 + \sigma^{2-\alpha})K^{\frac{-2\alpha+2}{3\alpha - 1}}  \\
    &+ 24F_0 L K^{-1} (4+  \max\{4, 48^{\alpha/(\alpha-1)}\sigma^{\alpha^2 / (\alpha - 1)}, 64\sigma^\alpha, \sigma^\alpha K^{\frac{\alpha}{3\alpha - 2}}\}) \\
    & + 24F_0 L K^{-1} (\max\{2, 48^{1/(\alpha-1)}\sigma^{\alpha / (\alpha - 1)}, 8\sigma, \sigma K^{\frac{1}{3\alpha - 2}}\})\\
    & = \mathcal{O}(K^{\frac{-2\alpha+2}{3\alpha - 1}})\end{align*}
This proves the statement of the theorem. 
\end{proof}

\section{Strongly-Convex Rates (Proof of Theorem~\ref{thm:strongly-convex-convergence})}\label{sec:thm-strongly}
For simplicity, we denote $\hat{g}_k = \min\cbr[\big]{\tfrac{\tau_k}{\|g_k\|}, 1}g_k$ and the bias $ b_k = \E[\hat{g}_k] - \nabla f(x_k) $.
\begin{align*}
\|x_k - x^*\|^2 &= \|\operatorname{proj}_{\mathcal{X}} (x_{k-1} - \eta_k\hat{g}_{k-1} - x^* )\|^2 \\
&\le \|(x_{k-1} - \eta_k\hat{g}_{k-1} - x^* )\|^2 \\
&= \|x_{k-1} - x^*\|^2 - 2\eta_k \inp*{x_{k-1}-x^*}{\nabla f(x_{k-1})}\\
&  - 2\eta_k \inp*{x_{k-1}-x^*}{b_{k-1}} + \eta_k^2 \|\hat{g}_{k-1}\|^2\\
& \le (1 - \mu \eta_k)\|x_{k-1} - x^*\|^2 - 2\eta_k(f(x_{k-1}) - f^*))\\
&  + 2\eta_k (\frac{\mu}{4}\norm{x_{k-1}-x^*}^2 + \frac{4}{\mu} \norm*{b_k}^2) + \eta_k^2 \|\hat{g}_{k-1}\|^2.
\end{align*}
The first inequality follows by the nonexpansivity of projections onto convex sets.

Rearrange and we get 
\begin{align*}
f(x_{k-1}) - f^* \le \frac{\eta_k^{-1} - \mu/2}{2}\|x_{k-1} - x^*\|^2 -   \frac{\eta_k^{-1}}{2}\|x_{k} - x^*\|^2 + \frac{4}{\mu}\norm{b_k}^2 + \frac{\eta_k}{2} \|\hat{g}_{k-1}\|^2.
\end{align*}

After taking expectation and apply the inequality from Lemma~\ref{lem:g-clip-bias-variance}, we get
\begin{align*}
\E\sbr*{f(x_{k-1})} - f^* &\le \E\sbr*{\frac{\eta_k^{-1} - \mu/2}{2}\|x_{k-1} - x^*\|^2 -   \frac{\eta_k^{-1}}{2}\|x_{k} - x^*\|^2}\\
&+ 4G^{2\alpha}\tau^{2-2\alpha}\mu^{-1} + \eta_k G^{\alpha}\tau^{2-\alpha}/2.
\end{align*}

Then take $\eta_k = \frac{4}{\mu (k+1)}, \tau_k = Gk^{\frac{1}{\alpha}}$ and multiply both side by $k$, we get
\begin{align*}
k\E\sbr*{f(x_{k-1})} - f^* &\le \frac{\mu}{8}\E\sbr*{k(k-1)\|x_{k-1} - x^*\|^2 -   k(k+1)\|x_{k} - x^*\|^2}\\
&+ 8G^{2}k^{\frac{2-\alpha}{\alpha}}\mu^{-1}.
\end{align*}
Notice that $\sum_{k=1}^K k^{\frac{2-\alpha}{\alpha}}\le \int_0^{K+1}k^{\frac{2-\alpha}{\alpha}}dk  \le (K + 1)^{2/\alpha}$. Sum over $k$ and we get
\begin{align*}
\tsum_{k=1}^K k\E\sbr*{f(x_{k-1})} - f^* &\le \frac{\mu}{8}\E\sbr*{ - T(T+1)\|x_{T} - x^*\|^2}\\
&+ 8G^{2}(K+1)^{\frac{2}{\alpha}}\mu^{-1}.
\end{align*}
Devide both side by $\frac{K(K+1)}{2}$ and we get

\begin{align*}
\frac{2}{K(K+1)}\tsum_{k=1}^K k\E\sbr*{f(x_{k-1})} - f^* &\le 8G^{2}K^{-1}(K+1)^{\frac{2-\alpha}{\alpha}}\mu^{-1}.
\end{align*}

Notice that for $K \ge 1$, $K^{-1} \le 2 (K+1)^{-1}$. We have
\begin{align*}
\frac{2}{K(K+1)}\tsum_{k=1}^K k\E\sbr*{f(x_{k-1})} - f^* &\le 16G^{2}(K+1)^{\frac{2-2\alpha}{\alpha}}\mu^{-1}.
\end{align*}
The theorem then follows by Jensen's inequality. \qed

\section{Effect of coordinate-wise moment bound}\label{sec:coord-moment}
We now examine how the rates would change if we replace Assumption~\ref{assump:alpha-moment-cvx} with Assumption~\ref{assump:coord-noise}.
\subsection{Convergence of GClip (proof of Corollary~\ref{cor:coord-g-clip-convergence})}\label{sec:cor-gclip}
We now look at\eqref{eqn:g-clip} under assumption \ref{assump:coord-noise}.


The proof of both the convex and non-convex rates following directly from the following Lemma.
\begin{lemma}\label{lemma:g-clip}
    For any $g(x)$ suppose that assumption \ref{assump:coord-noise} with $\alpha \in (1,2]$. Then suppose we have a constant upper-bound
    \[
        \E[\norm*{g(x)}^\alpha] \le D\,.
    \]
    Then $D$ satisfies
    \[
        d^{\frac{\alpha}{2}-1}\norm{B}_\alpha^\alpha \leq D \leq d^{\alpha/2} \norm{B}_\alpha^\alpha.
    \]
\end{lemma}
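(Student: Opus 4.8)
The plan is to sandwich $\norm{g(x)}^\alpha$ pointwise between two multiples of $\sum_{i=1}^d \abs{g_i(x)}^\alpha$, take expectations, and then invoke Assumption~\ref{assump:coord-noise}. The only analytic input is the behaviour of the map $s \mapsto s^{\alpha/2}$ on $[0,\infty)$: since $\alpha \in (1,2]$ we have $\alpha/2 \in (1/2,1]$, so this map is concave and vanishes at $0$, hence is also subadditive ($\rbr{\sum_i s_i}^{\alpha/2} \le \sum_i s_i^{\alpha/2}$).

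\textbf{Upper bound.} Writing $\norm{g(x)}^\alpha = \rbr{\sum_{i=1}^d g_i(x)^2}^{\alpha/2}$ and applying subadditivity of $s\mapsto s^{\alpha/2}$ with $s_i = g_i(x)^2$ gives the pointwise estimate $\norm{g(x)}^\alpha \le \sum_{i=1}^d \abs{g_i(x)}^\alpha$. Taking expectations and using $\E\sbr*{\abs{g_i(x)}^\alpha}\le B_i^\alpha$ yields $\E\sbr*{\norm{g(x)}^\alpha} \le \sum_i B_i^\alpha = \norm{B}_\alpha^\alpha \le d^{\alpha/2}\norm{B}_\alpha^\alpha$, the last inequality simply because $d\ge 1$. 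In particular the smallest valid constant $D$ satisfies $D \le d^{\alpha/2}\norm{B}_\alpha^\alpha$ (one can keep the sharper $D\le\norm{B}_\alpha^\alpha$, but the weaker form makes the sandwich symmetric, showing $D$ is pinned down up to a factor of $d$).

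\textbf{Lower bound.} Here I would use the concavity side: Jensen's inequality for $s\mapsto s^{\alpha/2}$ applied to the uniform average of $\{g_i(x)^2\}_{i=1}^d$ gives $\rbr{\frac1d\sum_i g_i(x)^2}^{\alpha/2} \ge \frac1d\sum_i \abs{g_i(x)}^\alpha$, i.e.\ $\norm{g(x)}^\alpha \ge d^{\alpha/2-1}\sum_i \abs{g_i(x)}^\alpha$ pointwise. Taking expectations, $\E\sbr*{\norm{g(x)}^\alpha} \ge d^{\alpha/2-1}\sum_i \E\sbr*{\abs{g_i(x)}^\alpha}$, so when the $B_i$ are the tight coordinate moments this is $d^{\alpha/2-1}\norm{B}_\alpha^\alpha$, forcing $D \ge d^{\alpha/2-1}\norm{B}_\alpha^\alpha$. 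To certify that this factor is unimprovable in general, I would exhibit the extremal instance in which $g(x)$ is deterministically equal to $B$ with all $B_i$ equal to a common value $b$: then $\E\abs{g_i(x)}^\alpha = b^\alpha = B_i^\alpha$, so Assumption~\ref{assump:coord-noise} holds with equality, while $\E\norm{g(x)}^\alpha = (db^2)^{\alpha/2} = d^{\alpha/2}b^\alpha = d^{\alpha/2-1}\norm{B}_\alpha^\alpha$.

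Since the argument is a two-line application of the concavity and subadditivity of $s\mapsto s^{\alpha/2}$, there is no genuine obstacle. The only points needing care are (i) getting the directions right for the exponent $\alpha/2\le 1$ — concavity (Jensen) produces the lower bound on $\norm{g}^\alpha$, whereas subadditivity produces the upper bound — and (ii) being explicit that the lower bound on $D$ is meaningful only once the $B_i$ are taken to be the tight per-coordinate moments, since otherwise either side can be inflated for free.
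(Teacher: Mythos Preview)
Your proof is correct. For the lower bound you do exactly what the paper does: pull the exponent out as $d^{\alpha/2}$ times an average, apply Jensen for the concave map $s\mapsto s^{\alpha/2}$, and note that the inequality is meaningful only when the $B_i$ are the tight coordinate moments. Your explicit extremal instance (all coordinates deterministically equal to a common $b$) is a nice addition the paper omits.

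For the upper bound you take a genuinely different and sharper route. The paper bounds $\sum_i g_i(x)^2 \le d\,(\max_i |g_i(x)|)^2$ and then $(\max_i|g_i(x)|)^\alpha \le \sum_i |g_i(x)|^\alpha$, which is where the factor $d^{\alpha/2}$ enters. You instead use subadditivity of $s\mapsto s^{\alpha/2}$ directly, obtaining the pointwise inequality $\norm{g(x)}^\alpha \le \sum_i |g_i(x)|^\alpha$ and hence $\E\norm{g(x)}^\alpha \le \norm{B}_\alpha^\alpha$ with no dimension factor at all; the stated $d^{\alpha/2}\norm{B}_\alpha^\alpha$ is then a gratuitous weakening. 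Your bound is strictly better and shows that the upper inequality in the lemma, as written, is never tight for $d>1$ (contrary to the paper's closing remark that the upper bound can be tight when coordinates are independent). The paper's chain-of-inequalities approach is a bit more pedestrian but lands exactly on the constant in the statement, whereas your argument reveals that the statement could have been strengthened.
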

\begin{proof}
Note that the function $(\cdot)^{\alpha/2}$ is concave for $\alpha \in (1,2]$. Using Jensen's inequality we can rewrite as:
\[
D \geq \E[\norm*{g(x)}^\alpha] = d^{\alpha/2}\E\sbr*{\rbr*{\frac{1}{d}\sum_{i=1}^d |g(x)^{(i)}|^2 }^{\alpha/2}}  \geq d^{\alpha/2 -1 }\E\sbr*{\sum_{i=1}^d |g(x)^{(i)}|^\alpha} \,.
\]
Since the right hand-side can be as large as $d^{\frac{\alpha}{2}-1}\norm{B}_\alpha^\alpha$, we have our first inequality.
On the other hand, we also have an upper bound below:

\begin{align*}
    \E[\norm*{g(x)}^\alpha] = \E\sbr*{\rbr*{\sum_{i=1}^d |g(x)^{(i)}|^2}^{\alpha/2}} \le \E\sbr*{\rbr*{d\rbr{\max_{i=1}^d g(x)^{(i)}}^2}^{\alpha/2}}
    \\\le \E\sbr*{d^{\alpha/2} \rbr{\max_{i=1}^d g(x)^{(i)}}^\alpha} 
    \le \E\sbr*{d^{\alpha/2} \sum_{i=1}^d \rbr{g(x)^{(i)}}^\alpha} \le d^{\alpha/2} \sum_{i=1}^d B_i^\alpha
\end{align*}
where $\norm{B}^\alpha_\alpha = \sum_{i=1}^d B_i^\alpha$. Thus, we have shown that
\[
    d^{\frac{\alpha}{2} - 1} \norm{B}^\alpha_\alpha \leq \E[\norm*{g(x)}^\alpha] \leq  d^{\alpha/2}\norm{B}^\alpha_\alpha \,.
\]
We know that Jensen's inequality is tight when all the co-ordinates have equal values. This means that if the noise across the coordinates is linearly correlated the lower bound is tighter, whereas the upper bound is tighter if the coordinates depend upon each other in a more complicated manner or are independent of each other.
\end{proof}
Substituting this bound on $G$ in Theorems~\ref{thm:strongly-convex-convergence} and~\ref{thm:nonconvex-convex-convergence} gives us our corollaries.

\subsection{Convergence of CClip (Proof of Theorem~\ref{thm:c-clip-convergence})}

The proof relies on the key lemma which captures the bias-variance trade off under the new noise-assumption and coordinate-wise clipping.
\begin{lemma}\label{lem:c-clip-bias-variance}
    For any $g(x)$ suppose that assumption \ref{assump:coord-noise} with $\alpha \in (1,2]$ holds. Denote $g_i$ to be $i_{th}$ component of $g(x)$, $\nabla f(x)_i$ to be $i_{th}$ component of $\nabla f(x)$. Then the estimator $\hat{g}(x) = [\hat{g}_1; \cdots; \hat{g}_d]$ from \eqref{eqn:c-clip} with clipping parameter $\tau = [\tau_1; \tau_2; \cdots ; \tau_d]$ satisfies:
    
    \[
        \E\sbr*{\norm{\hat{g}_i}^2} \leq B_i^\alpha \tau_i^{2 - \alpha} \text{ and } \norm{\E[\hat{g}_i] - \nabla f(x)_i}^2 \leq B_i^{2\alpha}\tau_i^{-2(\alpha -1)}\,. 
    \]
\end{lemma}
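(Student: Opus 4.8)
The plan is to mimic the proof of Lemma~\ref{lem:g-clip-bias-variance}, but carried out one coordinate at a time, exploiting the fact that coordinate-wise clipping decouples the $d$ coordinates: with $\tau = [\tau_1;\dots;\tau_d]$ the $i$-th output of \eqref{eqn:c-clip} is exactly $\hat g_i = \min\cbr*{\tfrac{\tau_i}{\abs{g_i}},1}g_i$, which is a function of $g_i$ alone. So I would fix an index $i$ and bound $\E[\abs{\hat g_i}^2]$ and $\abs*{\E[\hat g_i] - \nabla f(x)_i}$ using only Assumption~\ref{assump:coord-noise}, i.e. $\E[\abs{g_i(x)}^\alpha]\le B_i^\alpha$; summing/collecting over $i$ afterwards is what the downstream proof of Theorem~\ref{thm:c-clip-convergence} does.

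For the second-moment bound, I would first record the two pointwise inequalities $\abs{\hat g_i}\le\abs{g_i}$ and $\abs{\hat g_i}\le\tau_i$, both immediate from the definition of the clipping map. Then I split $\abs{\hat g_i}^2 = \abs{\hat g_i}^{\alpha}\abs{\hat g_i}^{2-\alpha}$, and, since $2-\alpha\ge 0$, bound the first factor by $\abs{g_i}^{\alpha}$ and the second by $\tau_i^{2-\alpha}$. Taking expectations and applying the moment bound gives $\E[\abs{\hat g_i}^2]\le \tau_i^{2-\alpha}\E[\abs{g_i}^{\alpha}]\le B_i^{\alpha}\tau_i^{2-\alpha}$.

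For the bias, I would use Jensen's inequality to write $\abs*{\E[\hat g_i] - \nabla f(x)_i} = \abs*{\E[\hat g_i - g_i]} \le \E[\abs{\hat g_i - g_i}]$, and then observe that $\hat g_i - g_i$ vanishes on $\{\abs{g_i}\le\tau_i\}$ while on the complement $\abs{\hat g_i - g_i} = \abs{g_i}-\tau_i\le\abs{g_i}$. Hence the bias is at most $\E[\abs{g_i}\indicator{\abs{g_i}>\tau_i}]$, and on that event $\abs{g_i} = \abs{g_i}^{\alpha}\abs{g_i}^{1-\alpha}\le\abs{g_i}^{\alpha}\tau_i^{1-\alpha}$ because $1-\alpha<0$; thus the bias is at most $\tau_i^{1-\alpha}\E[\abs{g_i}^\alpha]\le B_i^{\alpha}\tau_i^{1-\alpha}$, and squaring yields $B_i^{2\alpha}\tau_i^{-2(\alpha-1)}$.

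I do not expect a genuine obstacle here: the argument is essentially bookkeeping, and the only points requiring care are tracking the signs of the exponents $2-\alpha\ge 0$ and $1-\alpha\le 0$ so the monotonicity steps point the right way, and noticing that — unlike the global-clipping case in Lemma~\ref{lem:g-clip-bias-variance} — no union bound over coordinates and no dimension factor appears, which is precisely what later lets Theorem~\ref{thm:c-clip-convergence} replace the $d\,\norm{B}_\alpha^2$ dependence of Corollary~\ref{cor:coord-g-clip-convergence} by $\norm{B}_2^2$.
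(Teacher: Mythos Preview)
Your proposal is correct and is essentially the same approach as the paper: the paper's proof is the single line ``Apply Lemma~\ref{lem:g-clip-bias-variance} to the one dimensional case in each coordinate,'' and you have simply written out that one-dimensional application in full detail.
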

\begin{proof}
Apply Lemma~\ref{lem:g-clip-bias-variance} to the one dimensional case in each coordinate.
\end{proof}

\begin{proof}[Proof of Theorem~\ref{thm:c-clip-convergence}]
Theorem~\ref{thm:strongly-convex-convergence}
For simplicity, we denote $\hat{g}_k = \eta_k\hat{g}(x_k)$ and the bias $ b_k = \E[\hat{g}_k] - \nabla f(x_k) $.
\begin{align*}
    \|x_k - x^*\|^2 &= \|x_{k-1} - \eta_k\hat{g}_{k-1} - x^*\|^2 \\
    &= \|x_{k-1} - x^*\|^2 - 2\eta_k \inp*{x_{k-1}-x^*}{\nabla f(x_{k-1})}\\
    &  - 2\eta_k \inp*{x_{k-1}-x^*}{b_{k-1}} + \eta_k^2 \|\hat{g}_{k-1}\|^2\\
    & \le (1 - \mu \eta_k)\|x_{k-1} - x^*\|^2 - 2\eta_k(f(x_{k-1}) - f^*))\\
    &  + 2\eta_k (\frac{\mu}{4}\norm{x_{k-1}-x^*}^2 + \frac{4}{\mu} \norm*{b_k}^2) + \eta_k^2 \|\hat{g}_{k-1}\|^2.
\end{align*}

Rearrange and we get 
\begin{align*}
    f(x_{k-1}) - f^* \le \frac{\eta_k^{-1} - \mu/2}{2}\|x_{k-1} - x^*\|^2 -   \frac{\eta_k^{-1}}{2}\|x_{k} - x^*\|^2 + \frac{4}{\mu}\norm{b_k}^2 + \frac{\eta_k}{2} \|\hat{g}_{k-1}\|^2.
\end{align*}

After taking expectation and apply the inequality from Lemma~\ref{lem:g-clip-bias-variance}, we get
\begin{align*}
    \E\sbr*{f(x_{k-1})} - f^* &\le \E\sbr*{\frac{\eta_k^{-1} - \mu/2}{2}\|x_{k-1} - x^*\|^2 -   \frac{\eta_k^{-1}}{2}\|x_{k} - x^*\|^2}\\
    &+ \tsum_{i=1}^d 4B_i^{2\alpha}\tau_i^{2-2\alpha}\mu^{-1} + \eta_k G^{\alpha}\tau_i^{2-\alpha}/2.
\end{align*}

Then take $\eta_k = \frac{4}{\mu (k+1)}, \tau_i = B_i k^{\frac{1}{\alpha}}$ and multiply both side by $k$, we get
\begin{align*}
    k\E\sbr*{f(x_{k-1})} - f^* &\le \frac{\mu}{8}\E\sbr*{k(k-1)\|x_{k-1} - x^*\|^2 -   k(k+1)\|x_{k} - x^*\|^2}\\
    &+ 8\tsum_{i=1}^d B_i^{2}k^{\frac{2-\alpha}{\alpha}}\mu^{-1}.
\end{align*}
Notice that $\sum_{k=1}^K k^{\frac{2-\alpha}{\alpha}}\le \int_0^{K+1}k^{\frac{2-\alpha}{\alpha}}dk  \le (K + 1)^{2/\alpha}$. Sum over $k$ and we get
\begin{align*}
    \tsum_{k=1}^K k\E\sbr*{f(x_{k-1})} - f^* &\le \frac{\mu}{8}\E\sbr*{ - T(T+1)\|x_{T} - x^*\|^2}\\
    &+ 8\tsum_{i=1}^d B_i^{2}k^{\frac{2-\alpha}{\alpha}}\mu^{-1}.
\end{align*}
Devide both side by $\frac{K(K+1)}{2}$ and we get

\begin{align*}
    \frac{2}{K(K+1)}\tsum_{k=1}^K k\E\sbr*{f(x_{k-1})} - f^* &\le 8\tsum_{i=1}^d B_i^{2}k^{\frac{2-\alpha}{\alpha}}\mu^{-1}.
\end{align*}

Notice that for $K \ge 1$, $K^{-1} \le 2 (K+1)^{-1}$. We have
\begin{align*}
    \frac{2}{K(K+1)}\tsum_{k=1}^K k\E\sbr*{f(x_{k-1})} - f^* &\le 16\tsum_{i=1}^d B_i^{2}k^{\frac{2-\alpha}{\alpha}}\mu^{-1}.
\end{align*}
The theorem then follows by Jensen's inequality. \qed
\end{proof}

%

\section{Lower Bound (Proof of Theorem~\ref{thm:lower-bound})}\label{sec:lower-bound}
We consider the following simple one-dimensional function class parameterized by $b$:
\begin{equation}\label{eqn:function-class}
    \min_{x \in [0,1/2]} \cbr*{f_{b}(x) = \tfrac{1}{2}(x - b)^2}\,, \text{ for } b \in [0,1/2]\,.
\end{equation}
Also suppose that for $\alpha \in (1,2]$ and $b \in [0,1/2]$ the stochastic gradients are of the form:
\begin{equation}\label{eqn:noise-class}
    g(x) \sim \nabla f_b(x) + \chi_{b}\,, \E[g(x)] = \nabla f_b(x)\,, \text{ and } \E[|g(x)|^\alpha] \leq 1\,.
\end{equation}
Note that the function class \eqref{eqn:function-class} has $\mu=1$ and optimum value $f_b(b) = 0$, and the $\alpha$-moment of the noise in \eqref{eqn:noise-class} satisfies $G = B \leq 1$. 
Thus, we want to prove the following:

\begin{theorem}
For any $\alpha \in (1,2]$ there exists a distribution $\chi_{b}$ such that the stochastic gradients satisfy \eqref{eqn:noise-class}. Further, for any (possibly randomized) algorithm $\Ac$, define $\Ac_k\rbr*{f_b + \chi_{b}}$ to be the output of the algorithm $\Ac$ after $k$ queries to the stochastic gradient $g(x)$. Then:
\[
    \max_{b \in [0,1/2]} \E[f_b(\Ac_k(f_b + \chi_{b}))] \geq \Omega\rbr*{\frac{1}{k^{2(\alpha -1)/\alpha}}}\,.
\]
\end{theorem}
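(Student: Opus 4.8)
The plan is to prove this lower bound via a two-point (Le Cam style) reduction: I construct two nearby instances from the family \eqref{eqn:function-class}, namely $f_0$ with $b=0$ and $f_\delta$ with $b=\delta$ for a small $\delta = \delta(k)$ to be optimized, and I design a noise distribution $\chi_b$ that makes these two instances statistically hard to distinguish from $k$ stochastic-gradient queries while keeping $\E[|g(x)|^\alpha]\le 1$. Since $f_b(x) = \tfrac12(x-b)^2$, an algorithm that outputs $x_k$ suffers error $\tfrac12 x_k^2$ on $f_0$ and $\tfrac12(x_k-\delta)^2$ on $f_\delta$; by a standard argument any $x_k$ is $\Omega(\delta^2)$-suboptimal on at least one of the two instances whenever the algorithm cannot tell them apart with constant probability. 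So the goal reduces to: choose the heaviest-tailed admissible noise so that the total-variation (or KL, via Pinsker) distance between the law of the $k$ observed gradients under $f_0$ and under $f_\delta$ stays bounded below $1$, and then read off the largest $\delta$ for which this holds.

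The key technical device is the noise construction. At a point $x$ the gradient is $\nabla f_b(x) = x-b$, so the two instances differ in mean gradient by exactly $\delta$ at every query point; the per-query signal is $\delta$. I would take $\chi_b$ to be a heavy-tailed, mean-zero distribution — e.g. a symmetric distribution that with small probability $p$ takes a large value of order $p^{-1/\alpha}$ and is near $0$ otherwise — scaled so that $\E[|g(x)|^\alpha]\le 1$ on the whole domain $[0,1/2]$ (note $|\nabla f_b(x)|\le 1/2$, so the deterministic part is harmless). The crucial point is that such a distribution can be made to have a density that is ``flat'' over a region of width $\gg \delta$ except on a low-probability event, so a single sample reveals essentially no information: $\KL(\text{law under }f_0 \,\|\, \text{law under }f_\delta) = O(\delta^\alpha)$ per query (this $\delta^\alpha$ scaling, rather than $\delta^2$, is exactly where the heavy-tailedness buys the weaker lower bound). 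Summing over $k$ queries and using the chain rule / tensorization of KL gives total information $O(k\delta^\alpha)$; requiring this to be $O(1)$ forces $\delta \asymp k^{-1/\alpha}$, hence $f_b$-suboptimality $\Omega(\delta^2) = \Omega(k^{-2/\alpha})$. To get the stated exponent $2(\alpha-1)/\alpha$ rather than $2/\alpha$, I actually need the signal-to-information ratio to come out as $\delta^{\alpha/(\alpha-1)}$ in the relevant budget — i.e. the KL per step should scale like $\delta^{\alpha/(\alpha-1)}$, which happens when the informative event has probability tuned to $\delta$; optimizing $k \cdot (\text{KL per step}) = \Theta(1)$ then yields $\delta \asymp k^{-(\alpha-1)/\alpha}$ and error $\delta^2 = k^{-2(\alpha-1)/\alpha}$. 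I would set this up carefully by parameterizing the noise so that on the ``clip event'' $\{|g|>\tau\}$ the two laws are identical and only the bulk carries information.

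Concretely, the steps in order: (1) fix the two-point family $f_0, f_\delta$ and record that $\max_{b}\E[f_b(\Ac_k)] \ge \tfrac{\delta^2}{16}(1-\mathrm{TV})$ by the standard two-point inequality; (2) define the noise $\chi$ explicitly as a mixture whose bulk component has a density that is translation-insensitive over scales $\lesssim \delta$, verify mean-zero and $\E[|g|^\alpha]\le 1$ uniformly over $x\in[0,1/2]$ and $b\in\{0,\delta\}$; (3) bound the single-query KL divergence between the gradient laws at any queried point by $C\delta^{\alpha/(\alpha-1)}$ (or whatever the sharp scaling is), being careful that an \emph{adaptive} algorithm chooses query points based on past observations — here I use that the bound holds pointwise in $x$, so by the chain rule for KL the $k$-query transcript has divergence $\le Ck\delta^{\alpha/(\alpha-1)}$ regardless of the (possibly randomized, adaptive) querying strategy; (4) apply Pinsker to convert to TV, set $\delta$ so that $Ck\delta^{\alpha/(\alpha-1)} \le 1/4$, i.e. $\delta = c\, k^{-(\alpha-1)/\alpha}$, and conclude $\max_b \E[f_b(\Ac_k)] = \Omega(k^{-2(\alpha-1)/\alpha})$. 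The main obstacle is step (2)–(3): designing a single noise distribution that is simultaneously (a) heavy-tailed enough that its $\alpha$-moment is finite but $(\alpha+\epsilon)$-moments blow up, (b) normalized so $\E[|g|^\alpha]\le 1$ holds on the whole domain, and (c) ``blurry'' enough that shifting its mean by $\delta$ changes its law in KL by only $O(\delta^{\alpha/(\alpha-1)})$ — getting the exponent exactly right, rather than a looser $\delta^2$ or $\delta^\alpha$, is the delicate part and likely requires the density near the origin to behave like a specific power of $|t|$ so that the shift sensitivity matches the moment constraint. Everything after that is routine two-point lower-bound bookkeeping.
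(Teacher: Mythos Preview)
Your high-level skeleton — two-point Le Cam reduction, bound per-query KL, tensorize over adaptive queries, Pinsker, optimize the separation — is the same as the paper's. But the proposal has a real gap precisely at the step you flag as ``the main obstacle'': you do not actually construct a noise distribution for which the per-query KL between the two instances scales like $\delta^{\alpha/(\alpha-1)}$. You first compute a $\delta^\alpha$ scaling (which gives only the weaker bound $k^{-2/\alpha}$), then assert that the correct exponent ``likely requires the density near the origin to behave like a specific power of $|t|$.'' That is speculation, not a construction, and it is not clear that any fixed mean-zero heavy-tailed noise, shifted by $\delta$, achieves this KL scaling.

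The paper resolves this with a different device than a location shift. It lets $\chi_b$ depend on $b$ through the \emph{probability} of a rare large jump, not through the jump's location. For the two instances $b_0 = 2\epsilon$, $b_1 = \epsilon$, it sets $g(x) = x - \tfrac{1}{2\gamma}$ with probability $p_\nu = \gamma^\alpha - 2\nu\gamma\epsilon$ and $g(x) = x$ otherwise, where $\gamma = (4\epsilon)^{1/(\alpha-1)}$. The jump size $1/(2\gamma)$ is calibrated so that the $\alpha$-moment constraint $\E[|g(x)|^\alpha]\le 1$ is saturated, and $p_\nu$ is calibrated so that $\E[g(x)] = x - b_\nu$. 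Because only the Bernoulli probability differs between the two instances, the problem reduces exactly to distinguishing $\ber(p_0)$ from $\ber(p_1)$; the KL between these is $\asymp (p_0 - p_1)^2/p_0 \asymp (\gamma\epsilon)^2/\gamma^\alpha = \gamma^{2-\alpha}\epsilon^2 \asymp \epsilon^{\alpha/(\alpha-1)}$, which is precisely the scaling you needed but could not produce. Encoding the signal in the rare-event probability, rather than as a mean shift of a fixed law, is the idea your proposal is missing.
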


Our lower bound construction is inspired by Theorem~2 of \cite{bubeck2013bandits}. Let $\Ac_k(f_b + \chi_{b})$ denote the output of any possibly randomized algorithm $\Ac$ after processing $k$ stochastic gradients of the function $f_b$ (with noise drawn i.i.d. from distribution $\chi_{b}$). Similarly, let $\Dc_k(f_b + \chi_{b})$ denote the output of a \emph{deterministic} algorithm after processing the $k$ stochastic gradients. Then from Yao's minimax principle we know that for any fixed distribution $\Bc$ over $[0,1/2]$,
\[
    \min_{\Ac}\max_{b \in [0,1/2]}\E_{\Ac}\sbr*{ \E_{\chi_b}f_b(\Ac_k(f_b + \chi_{b}))} \geq \min_{\Dc} \E_{b \sim \Bc}\sbr*{\E_{\chi_b}f_b(\Dc_k(f_b + \chi_{b}))}\,.
\]
Here we denote $\E_{\Ac}$ to be expectation over the randomness of the algorithm $\Ac$ and $\E_{\chi_b}$ to be over the stochasticity of the the noise distribution $\chi_b$.
Hence, we only have to analyze deterministic algorithms to establish the lower-bound. Further, since $\Dc_k$ is deterministic, for any \emph{bijective} transformation $h$ which transforms the stochastic gradients, there exists a deterministic algorithm $\tDc$ such that $\tDc_k(h(f_b + \chi_{b})) = \Dc_k(f_b + \chi_{b})$. This implies that for any bijective transformation $h(\cdot)$ of the gradients:
\[
    \min_{\Dc} \E_{b \sim \Bc}\sbr*{\E_{\chi_b}f_b(\Dc_k(f_b + \chi_{b}))} =   \min_{\Dc} \E_{b \sim \Bc}\sbr*{\E_{\chi_b} f_b(\Dc_k(h(f_b + \chi_{b})))}\,.
\]
In this rest of the proof, we will try obtain a lower bound for the right hand side above.

We now describe our construction of the three quantities to be defined: the problem distribution $\Bc$, the noise distribution $\chi_{b}$, and the bijective mapping $h(\cdot)$. All of our definitions are parameterized by $\alpha \in (1,2]$ (which is given as input) and by $\epsilon \in (0,1/8]$ (which represents the desired target accuracy). We will pick $\epsilon$ to be a fixed constant which depends on the problem parameters (e.g. $k$) and should be thought of as being small.
\begin{itemize}
    \item{Problem distribution:} $\Bc$ picks $b_0 = 2\epsilon$ or $b_1 = \epsilon$ at random i.e. $\nu \in \{0,1\}$ is chosen by an unbiased coin toss and then we pick
    \begin{equation}\label{eqn:lower-prob}
        b_{\nu} = (2-\nu)\epsilon \,.
    \end{equation}
    \item{Noise distribution:} Define a constant $\gamma = (4\epsilon)^{1/(\alpha - 1)}$ and $p_\nu = (\gamma^\alpha - 2\nu\gamma\epsilon)$. Simple computations verify that $\gamma \in (0,1/2]$ and that
    \[
        p_\nu = (4\epsilon)^{\frac{\alpha}{\alpha - 1}} - 2\nu\rbr*{4 \epsilon^\alpha}^{\frac{1}{\alpha - 1}}  = (4-2\nu)\rbr*{4 \epsilon^\alpha}^{\frac{1}{\alpha - 1}} \in (0,1)\,.
    \]
    Then, for a given $\nu \in \{0,1\}$ the stochastic gradient $g(x)$ is defined as
    \begin{equation}\label{eqn:lower-noise}
        g(x) = \begin{cases}
                x - \frac{1}{2\gamma} &\text{ with prob. } p_\nu\,,\\
                x &\text{ with prob. } 1 - p_\nu\,.
            \end{cases}
    \end{equation}
    To see that we have the correct gradient in expectation verify that 
    \[
        \E[g(x)] = x - \frac{p_\nu}{2\gamma} = x - \frac{\gamma^{\alpha - 1}}{2} +\nu \epsilon = x - (2-\nu)\epsilon = x - b_\nu = \nabla f_{b_{\nu}}(x)\,.
    \]
    Next to bound the $\alpha$ moment of $g(x)$ we see that
    \[
        \E[\abs{g(x)}^\alpha] \leq \gamma^\alpha\rbr*{x - \frac{1}{2\gamma}}^{\alpha} + x^{\alpha} \leq \frac{1}{2} + \frac{1}{2} = 1\,.
    \]
    The above inequality used the bounds that $\alpha \geq 1$, $x \in [0,1/2]$, and $\gamma \in (0, 1/2]$. Thus $g(x)$ defined in \eqref{eqn:lower-noise} satisfies condition \eqref{eqn:noise-class}.
    \item{Bijective mapping:} Note that here the only unknown variable is $\nu$ which only affects $p_\nu$. Thus the mapping is bijective as long as the \emph{frequencies} of the events are preserved. Hence given a stochastic gradient $g(x_i)$ the mapping we use is:
    \begin{equation}\label{eqn:lower-map}
        h(g(x_i)) = \begin{cases}
                        1 &\text{ if } g(x_i) = x_i - \frac{1}{2\gamma}\,,\\
                        0 &\text{ otherwise.}
                        \end{cases}
    \end{equation}
\end{itemize}
Given the definitions above, the output of algorithm $\Dc_k$ is thus simply a function of $k$ i.i.d. samples drawn from the Bernoulli distribution with parameter $p_\nu$ (which is denoted by $\ber(p_\nu)$). We now show how achieving a small optimization error implies being able to guess the value of $\nu$.

\begin{lemma}\label{lem:guessing-game}
Suppose we are given problem and noise distributions defined as in \eqref{eqn:lower-prob} and \eqref{eqn:lower-noise}, and an bijective mapping $h(\cdot)$ as in \eqref{eqn:lower-map}. Further suppose that there is a deterministic algorithm $\Dc_k$ whose output after processing $k$ stochastic gradients satisfies
\[
    \E_{b \sim \Bc}\sbr*{\E_{\chi_b} f_b(\Dc_k(h(f_b + \chi_{b})))} < \epsilon^2/64\,.
\]
Then, there exists a deterministic function $\tDc_k$ which given $k$ independent samples of $\ber(p_\nu)$ outputs $\nu' = \tDc_k(\ber(p_\nu)) \in \{0,1\}$ such that
\[
    \Pr\sbr*{\tDc_k(\ber(p_\nu)) = \nu} \geq \frac{3}{4}\,.
\]
\end{lemma}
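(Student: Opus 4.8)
The plan is to build the guessing routine $\tDc_k$ by simply thresholding the optimization output. The key structural fact has already been observed in the text immediately preceding the lemma: after applying the bijective map $h$, the algorithm's entire transcript is a function of $k$ i.i.d. draws from $\ber(p_\nu)$. Hence $x := \Dc_k(h(f_{b_\nu} + \chi_{b_\nu}))$ is a fixed deterministic function of those $k$ Bernoulli samples, so any decision rule I define on top of $x$ is automatically a legitimate $\tDc_k$.

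First I would unpack the hypothesis quantitatively. Since $\Bc$ places mass $\tfrac12$ on $\nu=0$ and $\tfrac12$ on $\nu=1$, the assumed bound $\E_{b\sim\Bc}[\E_{\chi_b} f_b(\Dc_k)] < \epsilon^2/64$ is $\tfrac12\E_{\chi_{b_0}}[f_{b_0}(x)] + \tfrac12\E_{\chi_{b_1}}[f_{b_1}(x)] < \epsilon^2/64$, and dropping the other (nonnegative) term gives $\E_{\chi_{b_\nu}}[f_{b_\nu}(x)] < \epsilon^2/32$ for each $\nu \in \{0,1\}$. Markov's inequality at level $\epsilon^2/8$ then yields $\Pr[f_{b_\nu}(x) \ge \epsilon^2/8] \le \tfrac14$, i.e. with probability at least $\tfrac34$ we have $f_{b_\nu}(x) = \tfrac12(x-b_\nu)^2 < \epsilon^2/8$, equivalently $|x - b_\nu| < \epsilon/2$.

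Next I would define the guess: let $\tDc_k$ output $\nu' = 0$ if $x \ge \tfrac32\epsilon$ and $\nu' = 1$ otherwise. Recall $b_0 = 2\epsilon$ and $b_1 = \epsilon$, whose midpoint is exactly $\tfrac32\epsilon$, while the separation $b_0 - b_1 = \epsilon$ is twice the localization radius $\epsilon/2$ obtained above. Thus on the event $|x-b_\nu| < \epsilon/2$ we get $x > \tfrac32\epsilon$ when $\nu=0$ and $x < \tfrac32\epsilon$ when $\nu=1$, so in either case $\nu' = \nu$. Combining the two displays, $\Pr[\tDc_k(\ber(p_\nu)) = \nu] \ge \tfrac34$, which is the claim.

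The argument is essentially bookkeeping once the Bernoulli reduction is granted; the only care needed is in propagating the constants (the factor $2$ from averaging over $\nu$, then the Markov threshold chosen to land exactly at $\tfrac14$) and in checking that the gap $b_0-b_1=\epsilon$ is just wide enough for the midpoint rule to succeed given the $\epsilon/2$ radius. I therefore do not expect a real obstacle in this lemma itself; the substantive work lies in the next step (out of scope here), namely lower-bounding the number of $\ber(p_\nu)$ samples needed to guess $\nu$ with probability $\tfrac34$, which is where the choice $\gamma = (4\epsilon)^{1/(\alpha-1)}$ and the resulting $\epsilon$-vs-$k$ trade-off will produce the $k^{-2(\alpha-1)/\alpha}$ rate.
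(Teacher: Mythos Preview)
Your proposal is correct and matches the paper's proof essentially line for line: both drop one nonnegative term from the average over $\nu$ to get a per-$\nu$ bound, apply Markov to land at $\Pr[|x-b_\nu|\ge \epsilon/2]\le 1/4$, and then use the midpoint threshold $3\epsilon/2$ between $b_0=2\epsilon$ and $b_1=\epsilon$. The only cosmetic difference is that the paper first rewrites the hypothesis as $\E|x-b_\nu|^2 < \epsilon^2/16$ before Markov, whereas you apply Markov directly to $f_{b_\nu}(x)$; the two are equivalent since $f_{b_\nu}(x)=\tfrac12(x-b_\nu)^2$.
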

\begin{proof}
Suppose that we are given access to $k$ samples of $\ber(p_\nu)$. Use these $k$ samples as the input $h(f_b + \chi_{b}))$ to the procedure $\Dc_k$ (this is valid as previously discussed), and let the output of $\Dc_k$ be $x^{(\nu)}_k$. The assumption in the lemma states that
\[
    \E_\nu\sbr*{\E_{\chi_b} \abs{x^{(\nu)}_k - b_\nu}^2} < \frac{\epsilon^2}{32} \text{, which implies that } \E_{\chi_b}\abs{x^{(\nu)}_k - b_\nu}^2 < \frac{\epsilon^2}{16} \text{ almost surely.}
\]
Then, using Markov's inequality (and then taking square-roots on both sides) gives
\[
    \Pr\sbr*{\abs{x^{(\nu)}_k - b_\nu} \geq \frac{\epsilon}{2}} \leq \frac{1}{4}\,.
\]
Consider a simple procedure $\tDc_k$ which outputs $\nu' = 0$ if $x^{(\nu)}_k \geq \frac{3\epsilon}{2}$, and $\nu' = 1$ otherwise. Recall that $\abs{b_0 - b_1} = \epsilon$ with $b_0 = 2\epsilon$ and $b_1 = \epsilon$. With probability $\frac{3}{4}$, $\abs{x^{(\nu)}_k - b_\nu} < \frac{\epsilon}{2}$ and hence the output $\nu'$ is correct.
\end{proof}

Lemma~\ref{lem:guessing-game} shows that if the optimization error of $\Dc_k$ is small, there exists a procedure $\tDc_k$ which distinguishes between the Bernoulli distributions with parameters $p_0$ and $p_1$ using $k$ samples. To argue that the optimization error is large, one simply has to argue that a large number of samples are required to distinguish between $\ber(p_0)$ and $\ber(p_1)$.
\begin{lemma}\label{lem:sample-complex}
    For any deterministic procedure $\tDc_k(\ber(p_\nu))$ which processes $k$ samples of $\ber(p_\nu)$ and outputs $\nu'$
    \[
        \Pr\sbr*{\nu' = \nu} \leq \frac{1}{2} + \sqrt{k \rbr*{4\epsilon}^{\frac{\alpha}{\alpha - 1}}}\,.
    \]
\end{lemma}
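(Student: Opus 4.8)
The plan is to recognize Lemma~\ref{lem:sample-complex} as a textbook two-point (Le Cam) testing lower bound. The quantity $\Pr[\nu'=\nu]$ is precisely the success probability of a binary hypothesis test that, from $k$ i.i.d.\ draws, must decide whether they were generated by $\ber(p_0)$ or by $\ber(p_1)$, the two alternatives being a priori equally likely (this is how $\nu$ is chosen in Lemma~\ref{lem:guessing-game}). First I would reduce this success probability to a total-variation distance:
\begin{align*}
\Pr[\nu'=\nu] &= \tfrac12\Pr_{p_0}[\tDc_k = 0] + \tfrac12\Pr_{p_1}[\tDc_k = 1] \\
&= \tfrac12 + \tfrac12\rbr*{\Pr_{p_0}[\tDc_k = 0] - \Pr_{p_1}[\tDc_k = 0]} \\
&\le \tfrac12 + \tfrac12\,\mathrm{TV}\rbr*{\ber(p_0)^{\otimes k},\, \ber(p_1)^{\otimes k}},
\end{align*}
using that the gap between the probabilities any two laws assign to a fixed event is at most their TV distance. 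So it remains to show $\mathrm{TV}(\ber(p_0)^{\otimes k}, \ber(p_1)^{\otimes k}) \le 2\sqrt{k(4\epsilon)^{\alpha/(\alpha-1)}}$.

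Next I would bound this TV distance by Pinsker's inequality together with tensorization of the KL divergence, $\mathrm{TV}(P^{\otimes k}, Q^{\otimes k})^2 \le \tfrac12\KL(P^{\otimes k}\|Q^{\otimes k}) = \tfrac{k}{2}\KL(P\|Q)$, which reduces everything to the single-sample divergence $\KL(\ber(p_0)\|\ber(p_1))$. The key structural fact to exploit is that the two Bernoulli parameters differ by a factor of exactly two: from $\gamma = (4\epsilon)^{1/(\alpha-1)}$ one checks $\gamma\epsilon = \gamma^\alpha/4$, hence $p_0 = \gamma^\alpha$ while $p_1 = \gamma^\alpha - 2\gamma\epsilon = \gamma^\alpha/2 = p_0/2$. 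Consequently
\begin{align*}
\KL(\ber(p_0)\,\|\,\ber(p_1)) &= p_0\log\tfrac{p_0}{p_1} + (1-p_0)\log\tfrac{1-p_0}{1-p_1} \\
&= p_0\log 2 + (1-p_0)\log\tfrac{1-p_0}{1-p_0/2} \;\le\; p_0\log 2 \;\le\; p_0 = (4\epsilon)^{\alpha/(\alpha-1)},
\end{align*}
where the middle term is nonpositive because $1-p_0 < 1-p_0/2$. Plugging back, $\mathrm{TV} \le \sqrt{\tfrac{k}{2}(4\epsilon)^{\alpha/(\alpha-1)}}$, and since $\tfrac12\cdot\tfrac1{\sqrt2}<1$ this yields $\Pr[\nu'=\nu] \le \tfrac12 + \sqrt{k(4\epsilon)^{\alpha/(\alpha-1)}}$.

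I do not expect a genuine obstacle: the only thing requiring care is the bookkeeping tying $p_0,p_1$ to $\gamma$ and $\epsilon$ so that the single-sample KL collapses to a clean multiple of $(4\epsilon)^{\alpha/(\alpha-1)}$, and it is the relation $p_1=p_0/2$ that makes the $p_0\log(p_0/p_1)$ term exactly $p_0\log 2$ while forcing the complementary term to have a favorable sign. As a fully elementary alternative to Pinsker one can bound the product TV by a coupling: couple each of the $k$ draw-pairs through a shared uniform so that a pair disagrees with probability $p_0-p_1 = \tfrac12(4\epsilon)^{\alpha/(\alpha-1)}$, and a union bound over the $k$ coordinates gives $\mathrm{TV}\le k(p_0-p_1)$, hence the stronger bound $\Pr[\nu'=\nu]\le\tfrac12+\tfrac{k}{4}(4\epsilon)^{\alpha/(\alpha-1)}$, which dominates the stated square-root bound in the regime $k(4\epsilon)^{\alpha/(\alpha-1)}\le 16$ that will hold once $\epsilon$ is tuned in the proof of Theorem~\ref{thm:lower-bound}.
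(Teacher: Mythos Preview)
Your proof is correct and follows the same skeleton as the paper: reduce the success probability to a total-variation distance via Le Cam's two-point method, then apply Pinsker together with tensorization to reduce to the single-sample $\KL$ between $\ber(p_0)$ and $\ber(p_1)$. The only substantive difference is in how that last $\KL$ is bounded. The paper uses the generic $\chi^2$-type inequality $\KL(\ber(p_1),\ber(p_0)) \le \tfrac{(p_0-p_1)^2}{p_0(1-p_0)}$ and then plugs in $p_0-p_1 = 2\gamma\epsilon$, $p_0=\gamma^\alpha$. You instead exploit the structural identity $p_1 = p_0/2$ (from $\gamma\epsilon = \gamma^\alpha/4$) to compute $\KL(\ber(p_0)\,\|\,\ber(p_1))$ directly, getting $p_0\log 2$ plus a nonpositive term; this is cleaner and sidesteps any constant-tracking in the $\chi^2$ bound. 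Your optional coupling argument at the end is also a valid (and sharper) alternative to Pinsker here.
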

\begin{proof}
Here it would be convenient to make the dependence on the samples explicit. Denote $\ss^{(\nu)}_k = \rbr*{s_1^{(\nu)},\dots,s_k^{(\nu)}} \in \{0,1\}^k$ to be the $k$ samples drawn from $\ber(p_\nu)$ and denote the output as $\nu' = \tDc(\ss^{(\nu)}_k)$. With some slight abuse of notation where we use the same symbols to denote the realization and their distributions, we have:
\[
    \Pr\sbr*{\tDc(\ss^{(\nu)}_k) = \nu} = \frac{1}{2}\Pr\sbr*{\tDc(\ss^{(1)}_k) = 1} + \frac{1}{2}\Pr\sbr*{\tDc(\ss^{(0)}_k) = 0} = \frac{1}{2} + \frac{1}{2}\E\sbr*{\tDc(\ss^{(1)}_k) - \tDc(\ss^{(0)}_k)}\,.
\]
Next using Pinsker's inequality we can upper bound the right hand side as:
\[\E\sbr*{\tDc(\ss^{(1)}_k) - \tDc(\ss^{(0)}_k)} \leq \abs*{\tDc(\ss^{(1)}_k) - \tDc(\ss^{(0)}_k)}_{TV} \leq \sqrt{\frac{1}{2}\KL\rbr*{\tDc\rbr*{\ss^{(1)}_k}, \tDc\rbr*{\ss^{(0)}_k}}}\,,    
\]
where $\abs{\cdot}_{TV}$ denotes the total-variation distance and $\KL(\cdot, \cdot)$ denotes the KL-divergence. Recall two properties of KL-divergence: i) for a product measures defined over the same measurable space $(p_1,\dots,p_k)$ and $(q_1,\dots,q_k)$, 
\[
    \KL((p_1,\dots,p_k), (q_1,\dots,q_k)) = \sum_{i=1}^k \KL(p_i,q_i)\,,
\] and ii) for any deterministic function $\tDc$, 
\[
    \KL(p,q) \geq \KL(\tDc(p), \tDc(q))\,.
\]
Thus, we can simplify as
\begin{align*}
    \Pr\sbr*{\tDc(\ss^{(\nu)}_k) = \nu} &\leq  \frac{1}{2} +  \sqrt{\frac{k}{8}\KL\rbr*{\ber(p_1), \ber(p_0)}}\\
    &\leq  \frac{1}{2} + \sqrt{\frac{k}{8}\frac{(p_0 - p_1)^2}{p_0 ( 1- p_0)}}\\
    &\leq  \frac{1}{2} + \sqrt{\frac{k(\gamma \epsilon)^2}{4\gamma^\alpha}}\\
    &=  \frac{1}{2} +  \sqrt{k \rbr*{4^{(2 - 1/\alpha)}\epsilon}^{\frac{\alpha}{\alpha - 1}}}\,.
\end{align*}
Recalling that $\alpha \in (1,2]$ gives us the statement of the lemma.
\end{proof}
If we pick $\epsilon$ to be
\[
    \epsilon = \frac{1}{16 k^{(\alpha - 1)/\alpha}}\,,
\]
we have that
\[
    \frac{1}{2} + \sqrt{ k \rbr*{4\epsilon}^{\frac{\alpha}{\alpha - 1}}} < \frac{3}{4}\,.
\]
Given Lemmas~\ref{lem:guessing-game} and~\ref{lem:sample-complex}, this implies that for the above choice of $\epsilon$,
\[
  \E_{b \sim \Bc}\sbr*{\E_{\chi_b} f_b(\Dc_k(h(f_b + \chi_{b})))} \geq \epsilon^2/64  = \frac{1}{2^{14} k^{2(\alpha - 1)/\alpha}}\,.
\]
This finishes the proof of the theorem. Note that the readability of the proof was prioritized over optimality and it is possible to obtain significantly better constants. \qed

\section{Non-convex Lower Bound (Proof of Theorem~\ref{thm:lower-bound-nonconvex})}\label{sec:non-convex-lower-proof}

The proof is based on the proof of Theorem 1 in \cite{arjevani2019lower}. The only difference is that we assume bounded $\alpha-$moment of the stochastic oracle instead of bounded variance as in the original proof. We refer readers to \cite{arjevani2019lower} for more backgrounds and intuitions. For convenience, we study the stochastic setting ($K=1$ in \cite{arjevani2019lower}) instead of batched setting. We denote a $d-$dimensional vector $x$ as, $x = [x_{(1)}; ...; x_{(d)}].$
Let $\text{support}(x)$  denote the set of coordinates where $x$ is nonzero, i.e.
\begin{align*}
\text{support}(x) = \{i \in [d] | x_{(i)} \neq 0\} \subseteq [d].
\end{align*}
Denote $\text{prog}_\beta(x)$ as the highest index whose entry is $\beta-$far from zero.  
\begin{align*}
\text{prog}_\beta(x) = \max \{i \in [d] | |x_{(i)}| > \beta \} \in [d].
\end{align*}
Note that the function $\text{prog}_\beta(\,\cdot\,)$ is decreasing in $\beta$.
The function we use to prove the theorem is the same as in \cite{arjevani2019lower,carmon2017lower}. We denote
\begin{align*}
f_d(x) = -\Psi(1)\Phi(x_{(1)}) + \sum_{i = 2}^d \rbr*{\Psi(-x_{(i-1)})\Phi(-x_{(i)}) - \Psi(x_{(i-1)})\Phi(x_{(i)})}, \ \text{ where}
\end{align*}
\begin{align*}
\Psi(x) = \begin{cases}0, \quad x\le 1/2 \\ \exp(1 - \frac{1}{(2x-1)^2}), x > 1/2
\end{cases}, \Phi(x) = \sqrt{e}\int^x_{-\infty} e^{-\frac{t^2}{2}}dt.
\end{align*}
The above function satisfies the following important properties,
\begin{lemma}[Lemma 2 in \cite{arjevani2019lower}]\label{lemma:carmon}
	The function $f_d$ satisfies the following properties,
	\begin{enumerate}
		\item $f_d(0) - \inf_x f_d(x) \le 12d$.
		\item $f_d$ is $L_0$-smooth, where $L_0 = 152$.
		\item For all $x$, $\|\nabla f_d(x)\|_\infty \le 23$.
		\item For all $x$, $\text{prog}_0(\nabla f_d(x)) \le \text{prog}_\frac{1}{2}(x) + 1$
		\item For all $x$, if $\text{prog}_1(x) < d$, then $\|\nabla f_d(x)\|^2 \ge 1$.
	\end{enumerate}
\end{lemma}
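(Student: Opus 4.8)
The plan is to first isolate the one-variable facts about $\Psi$ and $\Phi$ that drive all five claims, and then read each claim off the explicit forms of $\nabla f_d$ and $\nabla^2 f_d$. Elementary calculus gives: $\Psi$ is nondecreasing with $\Psi(x) = \Psi'(x) = 0$ for $x \le 1/2$, $\Psi(1) = 1$, $0 \le \Psi(x) < e$, and $\Psi', \Psi''$ uniformly bounded (maximize $e^{1 - 1/u^2}/u^k$ over $u = 2x - 1 > 0$); and $\Phi$ is increasing with $\Phi'(x) = \sqrt{e}\,e^{-x^2/2} \in (0, \sqrt{e}]$, $\Phi''(x) = -x\sqrt{e}\,e^{-x^2/2}$ so $|\Phi''(x)| \le \sqrt{e}\max_t|t|e^{-t^2/2} = 1$, and $0 < \Phi(x) < \sqrt{2\pi e}$. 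The crucial structural observation is \emph{disjoint support}: for any $t$ at most one of $\Psi(t), \Psi(-t)$ is nonzero, and likewise at most one of $\Psi'(t), \Psi'(-t)$, since each requires $|t| > 1/2$ with a fixed sign.

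Next I would differentiate $f_d$ explicitly, writing $f_d = \sum_{i=1}^d \varphi_i$ with $\varphi_1(x) = -\Psi(1)\Phi(x_{(1)})$ and $\varphi_i(x) = \Psi(-x_{(i-1)})\Phi(-x_{(i)}) - \Psi(x_{(i-1)})\Phi(x_{(i)})$ for $i \ge 2$, so that $\varphi_i$ depends only on coordinates $i-1, i$. Then $[\nabla f_d(x)]_{(i)}$ collects a contribution from $\varphi_i$ (of the form $-\Psi(\pm x_{(i-1)})\Phi'(\pm x_{(i)})$) and one from $\varphi_{i+1}$ (of the form $-\Psi'(\pm x_{(i)})\Phi(\pm x_{(i+1)})$), and by disjoint support at most one summand of each type is nonzero, so at most two summands survive. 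Property~3 is immediate: $|[\nabla f_d(x)]_{(i)}| \le \sup|\Psi|\cdot\sup|\Phi'| + \sup|\Psi'|\cdot\sup|\Phi| \le e^{3/2} + \sup|\Psi'|\sqrt{2\pi e} \le 23$ after inserting the numerical maxima (for $i=1$ replace $\sup|\Psi|$ by $\Psi(1) = 1$; for $i=d$ only the first type occurs). Property~4 is also immediate: if $i > \text{prog}_{1/2}(x) + 1$ then $|x_{(i-1)}|, |x_{(i)}| \le 1/2$, so every $\Psi(\pm x_{(i-1)})$ and $\Psi'(\pm x_{(i)})$ vanishes and hence $[\nabla f_d(x)]_{(i)} = 0$.

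For property~5, set $i := \text{prog}_1(x) + 1 \le d$ with the convention $x_{(0)} = 1$, so $|x_{(i-1)}| \ge 1$ and $|x_{(i)}| \le 1$; the first gives $\Psi(x_{(i-1)})$ or $\Psi(-x_{(i-1)}) \ge \Psi(1) = 1$. A short case split on the sign and magnitude of $x_{(i)}$ shows the (at most two) surviving terms of $[\nabla f_d(x)]_{(i)}$ are all nonpositive (each is a minus sign times a product of nonnegative quantities), so they cannot cancel, and the $\Psi\Phi'$ term has magnitude at least $\Psi(\pm x_{(i-1)})\Phi'(x_{(i)}) \ge 1 \cdot \sqrt{e}\,e^{-x_{(i)}^2/2} \ge \sqrt{e}\,e^{-1/2} = 1$; thus $\|\nabla f_d(x)\|^2 \ge ([\nabla f_d(x)]_{(i)})^2 \ge 1$. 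For property~1, evaluate $f_d(0) = -\Psi(1)\Phi(0) = -\sqrt{\pi e/2} < 0$, and bound $f_d$ below term-by-term using $\Psi \ge 0$, $0 < \Phi < \sqrt{2\pi e}$, $0 \le \Psi < e$: each of the $d-1$ products $-\Psi(x_{(i-1)})\Phi(x_{(i)})$ is $> -e\sqrt{2\pi e}$ and $-\Psi(1)\Phi(x_{(1)}) > -\sqrt{2\pi e}$, so $\inf_x f_d(x) > -e\sqrt{2\pi e}\,d > -12d$, giving $f_d(0) - \inf_x f_d(x) < 12d$. Finally, for property~2, note $\nabla^2 f_d = \sum_i \nabla^2\varphi_i$ is tridiagonal (each $\nabla^2\varphi_i$ lives on the $\{i-1,i\}$ block), bound each second partial by the uniform bounds on the products $\Psi''\Phi$, $\Psi'\Phi'$, $\Psi\Phi''$ (again using disjoint support to limit the count of terms), and conclude $\|\nabla^2 f_d(x)\|_{\mathrm{op}} \le \max_j \sum_{|k-j| \le 1} |[\nabla^2 f_d(x)]_{j,k}| \le 152$ via the symmetric Gershgorin row-sum bound.

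The main obstacle is purely bookkeeping: pinning down the sharp numerical constants $23$ and $152$ requires carefully computing $\sup|\Psi'|$, $\sup|\Psi''|$, and $\max|x|e^{-x^2/2}$ and then tallying the at-most-three Hessian entries per row, and one must verify in property~5 that the sign structure never lets the two surviving gradient terms cancel. The conceptual content is carried entirely by two simple facts — the disjoint supports of $\Psi(\pm\cdot), \Psi'(\pm\cdot)$, and the nearest-neighbor (tridiagonal) coupling of the coordinates — and the remainder is one-variable calculus.
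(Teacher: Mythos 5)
Your proposal is correct, but note that the paper itself contains no proof of this statement: the lemma is imported wholesale by citation (it is Lemma~2 of \cite{arjevani2019lower}, whose construction and constants trace back to \cite{carmon2017lower}), and the present paper only uses its five conclusions downstream. What you have written is essentially a reconstruction of that cited proof, and it is sound: the two structural facts you isolate --- that $\Psi(t)$ and $\Psi(-t)$ (and likewise $\Psi'(\pm t)$) can never be simultaneously nonzero, and that each $\varphi_i$ couples only coordinates $i-1,i$ so the gradient has at most two surviving terms per coordinate and the Hessian is tridiagonal --- are exactly what drives properties 3--5, and the sign argument in property 5 (all surviving terms carry a minus sign times nonnegative factors, so no cancellation, and the $\Psi\Phi'$ term alone is $\geq \sqrt{e}\,e^{-1/2}=1$) is the right one, including the convention $x_{(0)}=1$ for the boundary case. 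The deferred bookkeeping does close: $\sup|\Psi'|=\sqrt{54/e}$ gives the gradient bound $e^{3/2}+\sqrt{108\pi}\approx 22.9\le 23$, and for smoothness the Gershgorin row sum $\sup|\Psi|\sup|\Phi''|+\sup|\Psi''|\sup|\Phi|+2\sup|\Psi'|\sup|\Phi'|\approx e+ (8e\cdot 1.49)\sqrt{2\pi e}+2\sqrt{54}\approx 151$ indeed lands under $152$ (the only nontrivial computation is $\sup|\Psi''|\approx 32.4$, obtained by maximizing $8e\,e^{-v}|2v^{3}-3v^{2}|$ over $v=(2x-1)^{-2}>0$); property 1 follows as you say from $f_d(0)\le 0$ and $\inf f_d\ge -d\,e\sqrt{2\pi e}>-12d$. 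So relative to the paper your route is "different" only in that you prove what the paper cites; relative to the cited source it is the same argument.
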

We also define the stochastic oracle $g_d(x)$ as below
\begin{align*}
g_d(x)_{(i)} = \rbr*{1 + \mathbbm{1}{\cbr*{i = \text{prog}_{\frac{1}{4}}(x) + 1}}\rbr*{\frac{z}{p} - 1} }\frac{\partial}{\partial x_{(i)}} f_d(x) 
\end{align*}
where $z \sim \text{Bernoulli}(p)$. The stochasticity of $g_d(x)$ is only in the $(\text{prog}_{\frac{1}{4}}(x) + 1)$th coordinate. It is easy to see that $g_d(x)$ is a probability-$p$ zero chain as in \citep[Definition 2]{arjevani2019lower} i.e. it satisfies
\begin{align*}
\mathbb{P}\rbr*{\exists x, \text{ s.t. prog}_0(g_d(x)) = \text{prog}_{\frac{1}{4}}(x) + 1 } \le p,\\
\mathbb{P}\rbr*{\exists x, \text{ s.t. prog}_0(g_d(x)) > \text{prog}_{\frac{1}{4}}(x) + 1 } = 0.
\end{align*}
The second claim is because $\text{prog}_\beta(\,\cdot\,)$ is decreasing in $\beta$ and
\[
\text{prog}_{\frac{1}{4}}(\nabla f_d(x)) \leq \text{prog}_{0}(\nabla f_d(x)) \leq \text{prog}_{\frac{1}{2}}(x) + 1 \leq \text{prog}_{\frac{1}{4}}(x) + 1\,.
\]
The first claim is because if $z = 0$, then we explicitly set the $(\text{prog}_{\frac{1}{4}}(x) + 1)$th coordinate to 0. The stochastic gradient additionally has bounded $\alpha$-moment as we next show.
\begin{lemma}\label{lem:lowerbound-variance}
	The stochastic oracle above is an unbiased estimator of the true gradient, and for any $\alpha \in (1,2]$
	$$ \E\sbr*{\|g_d(x)\|^\alpha}  \le 2\|\nabla f_d(x)\|^\alpha + 23^\alpha \frac{2}{p^{\alpha-1}} .$$
\end{lemma}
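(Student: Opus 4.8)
The plan is to exploit the fact that $g_d(x)$ agrees with the true gradient $v := \nabla f_d(x)$ in every coordinate except the single index $j := \text{prog}_{1/4}(x)+1$, at which $g_d(x)_{(j)} = \tfrac{z}{p}\,\tfrac{\partial}{\partial x_{(j)}}f_d(x)$ with $z \sim \ber(p)$. Unbiasedness is then immediate: since $\E[z/p] = 1$, we get $\E[g_d(x)_{(j)}] = \tfrac{\partial}{\partial x_{(j)}}f_d(x) = v_{(j)}$, while all remaining coordinates of $g_d(x)$ are deterministic and already equal to those of $v$, so $\E[g_d(x)] = \nabla f_d(x)$.

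For the moment bound I would condition on $z$. If $z = 0$ (probability $1-p$), the $j$-th coordinate of $g_d(x)$ is set to zero, so $\norm{g_d(x)}^2 = \norm{v}^2 - v_{(j)}^2 \le \norm{v}^2$ and hence $\norm{g_d(x)}^\alpha \le \norm{v}^\alpha$. If $z = 1$ (probability $p$), then $g_d(x)_{(j)} = v_{(j)}/p$, so $\norm{g_d(x)}^2 = \norm{v}^2 + (p^{-2}-1)v_{(j)}^2 \le \norm{v}^2 + p^{-2}v_{(j)}^2$, which gives $\norm{g_d(x)} \le \norm{v} + p^{-1}\abs{v_{(j)}}$ via $\sqrt{a^2+b^2}\le a+b$. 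Applying the power-mean inequality $(a+b)^\alpha \le 2^{\alpha-1}(a^\alpha+b^\alpha) \le 2(a^\alpha+b^\alpha)$, valid since $\alpha \in (1,2]$ and $t \mapsto t^\alpha$ is convex, yields $\norm{g_d(x)}^\alpha \le 2\norm{v}^\alpha + 2 p^{-\alpha}\abs{v_{(j)}}^\alpha$ on this event.

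Taking the expectation over $z$ and collecting terms gives $\E[\norm{g_d(x)}^\alpha] \le (1-p)\norm{v}^\alpha + 2p\norm{v}^\alpha + 2p^{1-\alpha}\abs{v_{(j)}}^\alpha \le 2\norm{v}^\alpha + 2\abs{v_{(j)}}^\alpha/p^{\alpha-1}$, using $p \le 1$ in the last step. Finally, since $v_{(j)}$ is a single partial derivative, $\abs{v_{(j)}} \le \norm{\nabla f_d(x)}_\infty \le 23$ by Lemma~\ref{lemma:carmon}(3), so $\E[\norm{g_d(x)}^\alpha] \le 2\norm{\nabla f_d(x)}^\alpha + 23^\alpha\tfrac{2}{p^{\alpha-1}}$, exactly as stated. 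There is essentially no obstacle in this argument; the only points that need care are isolating the one stochastic coordinate inside the Euclidean norm and retaining the constant $2^{\alpha-1}\le 2$ (rather than a cruder bound) so that the resulting constant matches the claimed $23^\alpha \cdot 2/p^{\alpha-1}$.
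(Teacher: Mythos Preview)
Your proof is correct. The paper's argument is organized slightly differently: instead of conditioning on $z$ and bounding $\norm{g_d(x)}^\alpha$ in each of the two cases, it first applies the same convexity inequality in the form $\norm{g_d(x)}^\alpha \le 2\norm{\nabla f_d(x)}^\alpha + 2\norm{g_d(x)-\nabla f_d(x)}^\alpha$, and then observes that the difference vector is supported on the single coordinate $j$ with value $(\tfrac{z}{p}-1)v_{(j)}$, so that $\E\norm{g_d(x)-\nabla f_d(x)}^\alpha = |v_{(j)}|^\alpha\,\E\bigl|\tfrac{z}{p}-1\bigr|^\alpha \le 23^\alpha \cdot \tfrac{2}{p^{\alpha-1}}$ after a direct computation of the Bernoulli moment. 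Both routes use exactly the same ingredients (one stochastic coordinate, the bound $\norm{\nabla f_d}_\infty\le 23$ from Lemma~\ref{lemma:carmon}, and $(a+b)^\alpha\le 2a^\alpha+2b^\alpha$); your conditioning approach is marginally sharper in the intermediate step (yielding $(1+p)\norm{v}^\alpha$ before relaxing to $2\norm{v}^\alpha$), while the paper's decomposition is a bit more modular since it isolates the noise moment $\E|z/p-1|^\alpha$ as a standalone calculation.
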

\begin{proof}
	The unbiased-ness is easy to verify. For the bounded $\alpha$-moment, observe that only the $(\text{prog}_{\frac{1}{4}} + 1)$-th coordinate is noisy and differs by a factor of $(\frac{z}{p} - 1)$. Hence, we have
	\begin{align*}
	\E\sbr*{\|g_d(x)\|^\alpha}  &\le  2\|\nabla f_d(x)\|^\alpha + 2\E\sbr*{\|g_d(x) - \nabla f_d(x)\|^\alpha} \\
	&\le 2\|\nabla f_d(x)\|^\alpha + \|\nabla f_d(x)\|_\infty^\alpha \E\sbr*{|\frac{z}{p} - 1|^\alpha}  \\
	&\le 2\|\nabla f_d(x)\|^\alpha + \|\nabla f_d(x)\|_\infty^\alpha\frac{p(1-p)^\alpha + (1-p)p^\alpha}{p^\alpha} \\
	&\le 2\|\nabla f_d(x)\|^\alpha + 23^\alpha \frac{2}{p^{\alpha-1}} 
	\end{align*}
	The first inequality followed from Jensen's inequality and the convexity of $\norm{\, \cdot \,}^\alpha$ for $\alpha \in (1,2]$:
	\[
	\norm{u + v}^\alpha \leq 4\norm{\tfrac{u + v}{2}}^\alpha  \leq 2(\norm{u}^\alpha + \norm{v}^\alpha) \text{ for any } u,v\,.
	\]
\end{proof}

Now we are ready to prove Theorem~\ref{thm:lower-bound-nonconvex}. Given accuracy parameter $\epsilon$, suboptimality $\Delta = f(0) - f^*$, smoothness constant $L$, and bounded $\alpha-$moment $G^\alpha$, we define
$$f(x) = \frac{L\lambda^2}{152}f_d(\frac{x}{\lambda}),$$
where $\lambda = \frac{304\epsilon}{L}$ and $d = \lfloor\frac{\Delta L}{7296\epsilon^2}\rfloor$. Then,
\[
g(x) = \frac{L\lambda}{152}g_d(x/\lambda) = 2\epsilon g_d(x/\lambda).
\]
Using Lemma~\ref{lem:lowerbound-variance}, we have 
\[
\E\sbr*{\|g(x)\|^\alpha} \le  8\epsilon^\alpha\|\nabla f_d(x)\|^\alpha + \frac{5000\epsilon^\alpha}{p^{\alpha-1}} 
\]
When $G \ge 4\sqrt{\Delta L}, $ we can set $p = \frac{(5000\epsilon)^{\frac{\alpha}{\alpha-1}}}{(G-4\sqrt{\Delta L})^{\frac{\alpha}{\alpha-1}}}$ and get $\E\sbr*{\|g(x)\|^\alpha} \le G^\alpha.$

Let $x_k$ be the output of any \emph{zero-respecting} algorithm $\Ac$. By \citep[Lemma 1]{arjevani2019lower}, we know that with probability at least 1/2, $ \text{prog}_1(x_{k}) \leq \text{prog}_0(x_{k}) < d$ for all $k \le \frac{(d-1)}{2p}$. Now applying Lemma~\ref{lemma:carmon}.5, we have that for all $k \leq \frac{(d-1)}{2p}$:
\[
\E[\norm{\nabla f(x_k)}] \geq \frac{1}{2}\frac{L\lambda}{152}\E[\mathbbm{1}{\|\nabla f_d(x_k/\lambda)} \|\,|\, \cbr*{\text{prog}_{1}(x_k) < d}] \ge \epsilon\,.
\]
Therefore, $\E\|\nabla f(x_k)\| \ge \epsilon$, for all $k \le \frac{(d-1)}{2p} = \frac{(G-4\sqrt{\Delta L})^{\frac{\alpha}{\alpha-1}}\Delta L}{7296 \times 5000^{\frac{\alpha}{\alpha-1}}\epsilon^{2 + \frac{\alpha}{\alpha-1}}} = c(\alpha) (G-4\sqrt{\Delta L})^{\frac{\alpha}{\alpha-1}}\Delta L \epsilon^{-\frac{3\alpha-2}{\alpha-1}}$. By eliminating $\epsilon$, we can rewrite this in terms of $k$. Finally, the techniques from \citep[Theorem 3]{arjevani2019lower} show how to lift lower-bounds for \emph{zero-respecting} algorithms to any randomized method.

\section{A Comparison with \citep{simsekli2019tail}}\label{sec:comp}

We are not the first to study the heavy-tailed noise behavior in neural network training. The novel work by \citet{simsekli2019tail} studies the noise behavior of AlexNet on Cifar 10 and observed that the noise does not seem to come from Gaussian distribution. However, in our AlexNet training with ImageNet data, we observe that the noise histogram looks Gaussian as in Figure~\ref{fig:ICML-comp}(a, b). We believe the difference results from that in \citep{simsekli2019tail}, the authors treat the noise in each coordinate as an independent scaler noise, as described in the original work on applying tail index estimator. We on the other hand, consider each the noise as a high dimensional random vector computed from a minibatch. We are also able to observe heavy tailed noise if we fix a single minibatch and plot the noise in each dimension, as shown in Figure~\ref{fig:ICML-comp}(c). The fact that noise is well concentrated on Cifar is also observed by \citet{panigrahi2019non}.

\begin{figure}[h]
	\begin{subfigure}{.33\textwidth}
		\centering
		\includegraphics[width=0.95\linewidth]{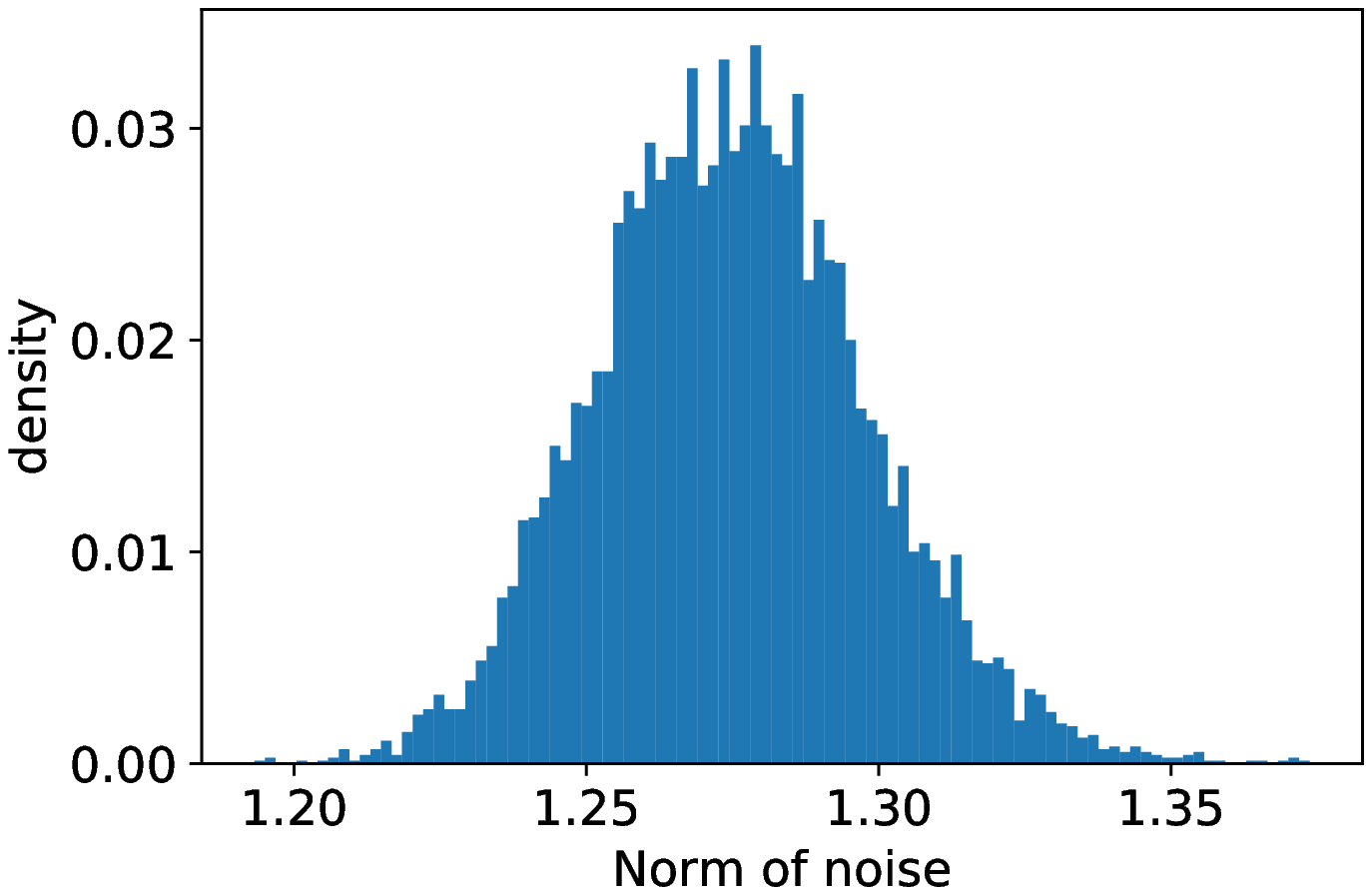}
		\caption{}
	\end{subfigure}
	\begin{subfigure}{.33\textwidth}
		\centering
		\includegraphics[width=0.95\linewidth]{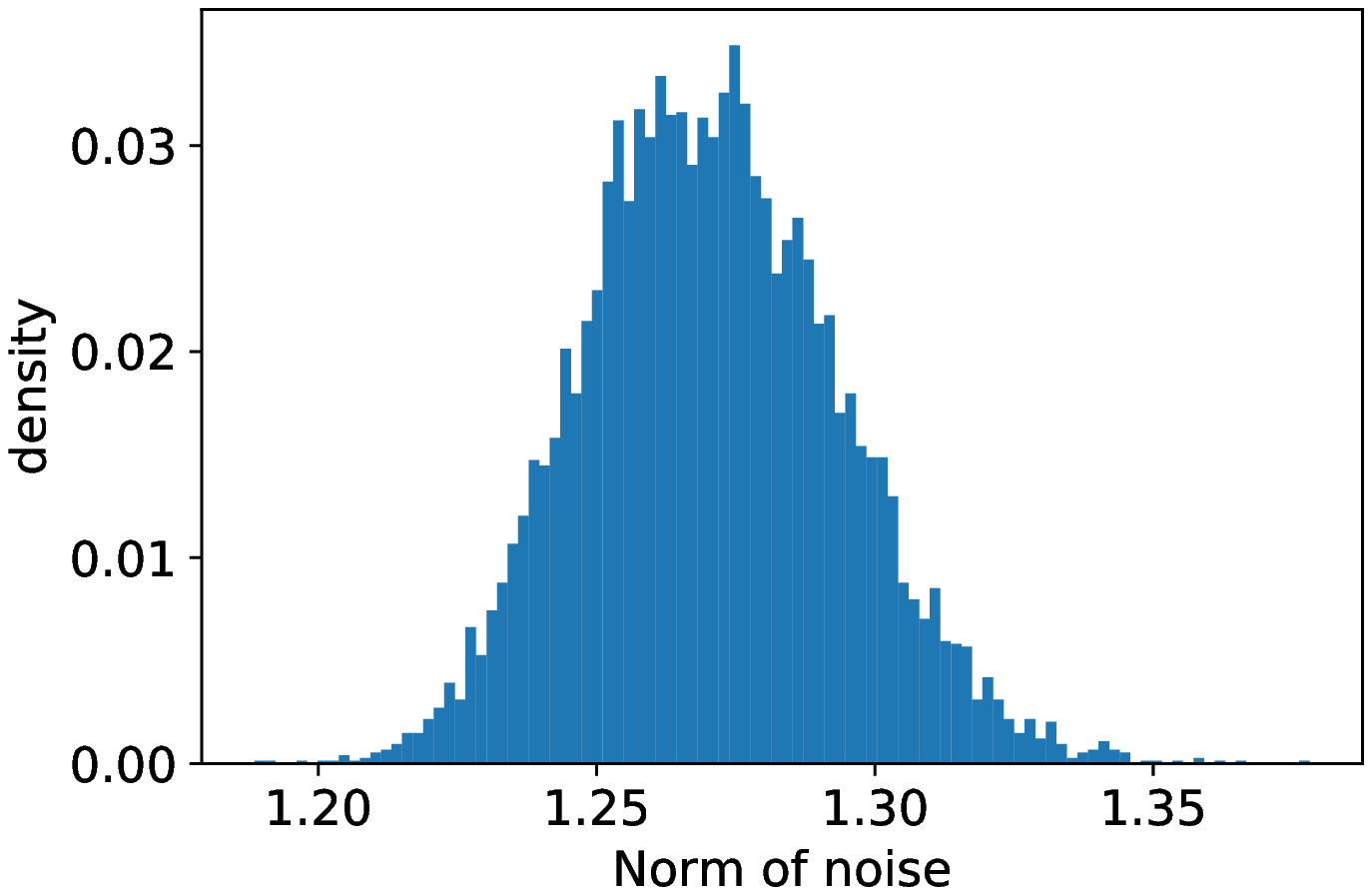}
		\caption{}
	\end{subfigure}
	\begin{subfigure}{.33\textwidth}
		\centering
		\includegraphics[width=0.95\linewidth]{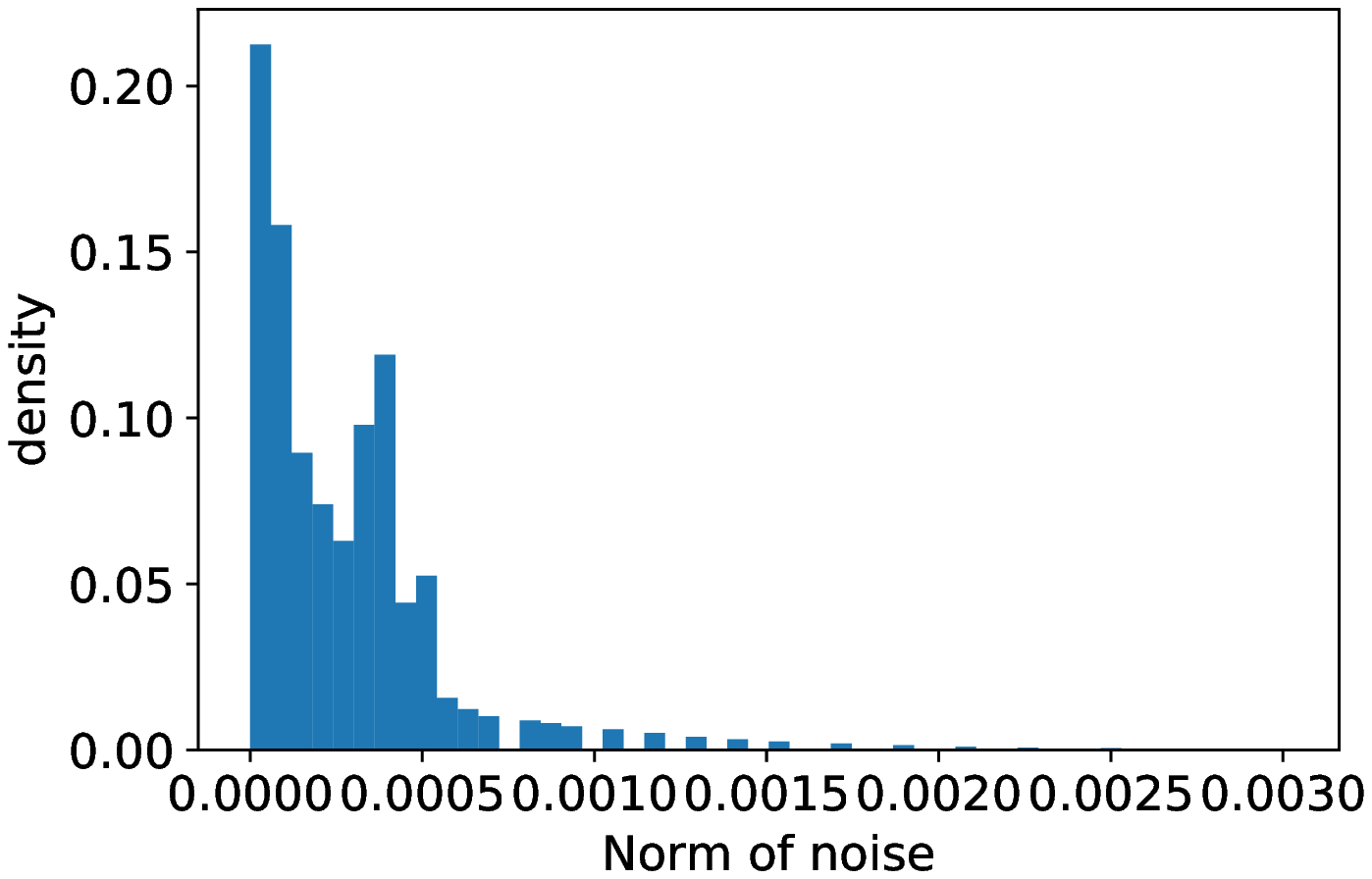}
		\caption{}
	\end{subfigure}
	
	\caption{ (a) Noise histogram of AlexNet on ImageNet data at initialization. (b)Noise histogram of AlexNet on ImageNet data at 5k iterations.   (c) The per dimension noise distribution within a single minibatch at initialization. }
	\label{fig:ICML-comp}
\end{figure}

Furthermore, we used the tail index estimator presented in \cite{simsekli2019tail} to estimate the tail index of noise norm distribution. Though some assumptions of the estimator are not satisfied (in our case, the symmetry assumption; in \cite{simsekli2019tail}, the symmetry assumption and independence assumption), we think it can be an indicator for measuring the ``heaviness'' of the tail distribution.

\begin{figure}[t]
\centering
\captionsetup[subfigure]{position=b,format=myformat}
    \begin{subfigure}{.45\textwidth}
	    \centering
    	\includegraphics[width=0.85\linewidth]{figs/resnet_imagenet_0k.eps}
        \caption{$\text{ImageNet training}, \hat{\alpha}= 1.99$}
    \end{subfigure}
    \begin{subfigure}{.45\textwidth}
	    \centering
    	\includegraphics[width=0.85\linewidth]{figs/bert_wiki_0k.eps}
	    \caption{Bert pretraining, $\hat{\alpha}=1.08$}
    \end{subfigure}
	\vspace{-0.2cm}
	\caption{Tail index estimation of gradient noise in ImageNet training and BERT training.}
\end{figure}

\section{ACClip in ImageNet Training}\label{sec:renet-exp}

For completeness, we test ACClip on ImageNet training with ResNet50. After hyperparameter tuning for all algorithms, ACClip is able to achieve better performance compared to ADAM, but worse performance compared to SGD. This is as expected because the noise distribution in ImageNet + ResNet50 training is well concentrated. The validation accuracy for SGD, ADAM, ACClip are $0.754, 0.716, 0.730$ respectively.

\begin{figure}[h]
\centering

\begin{subfigure}[b]{0.55\textwidth}
	\centering
	\includegraphics[width=\linewidth]{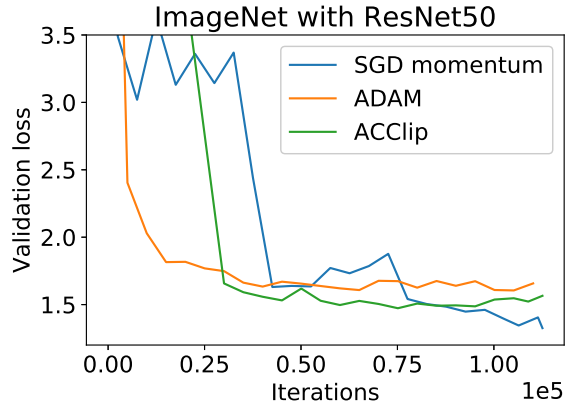}
	\caption{}\label{fig:loss-sgd-Adam-Acclip-imagenet}
\end{subfigure}
	\caption{ Validation loss for ResNet50 trained on ImageNet. SGD outperforms Adam and ACClip.}
\end{figure}

\end{document}